\newtheorem{theorem}{Theorem}[section]
\newtheorem{corollary}[theorem]{Corollary}
\newtheorem{lemma}[theorem]{Lemma}
\newtheorem{proposition}[theorem]{Proposition}
\newtheorem{definition}[theorem]{Definition}
\newtheorem{fact}[theorem]{Fact}
\numberwithin{equation}{section}
\newcommand{\supp}{\text{\rm supp}\,}
\begin{document}
\title[Dunkl-Schr\"odinger operators and Fefferman--Phong inequality] {Schr\"odinger operators\\  with reverse H\"older class potentials   \\  in the  Dunkl setting and their Hardy spaces}

\author[Agnieszka Hejna]{Agnieszka Hejna}

\subjclass[2010]{{primary: 42B30; secondary: 42B25, 42B35, 35K08,  	35J10}}
\keywords{Rational Dunkl theory, Schr\"odinger operators, Reverse H\"older classes, Fefferman--Phong inequality, Hardy spaces.}

\begin{abstract}
For a normalized root system $R$ in $\mathbb R^N$ and a multiplicity function $k\geq 0$ let $\mathbf N=N+\sum_{\alpha \in R} k(\alpha)$.
Let $L=-\Delta +V$, $V\geq 0$,  be the Dunkl--Schr\"odinger operator on $\mathbb R^N$. Assume that there exists $q >\max(1,\frac{\mathbf{N}}{2})$ such that  $V$ belongs to the reverse H\"older class $\text{RH}^q(dw)$. We prove the Fefferman--Phong inequality for $L$. As an application, we conclude  that the Hardy space $H^1_{L}$, which is originally defined by means of the maximal function associated with the semigroup $e^{tL}$, admits an atomic decomposition with local atoms in the sense of Goldberg, where their localization are adapted to  $V$.
\end{abstract}
\address{A. Hejna, Uniwersytet Wroc\l awski,
Instytut Matematyczny,
Pl. Grunwaldzki 2/4,
50-384 Wroc\l aw,
Poland}
\email{hejna@math.uni.wroc.pl}

\thanks{
Research supported by the National Science Centre, Poland (Narodowe Centrum Nauki), Grant 2017/25/B/ST1/00599}

\enlargethispage{0.37cm}
\maketitle
\tableofcontents
\thispagestyle{empty}

\section{Introduction}
On  $\mathbb R^N$, $N\geq 3$, let us consider the Schr\"odinger differential operator
\begin{equation}\label{eq:operator_classical}
\mathscr{L}=   -\Delta_{\text{eucl}}+V(x)=-\sum_{j=1}^{N}\partial_{j}^2+V(x)
\end{equation}
where $V \in L^2_{\text{loc}}(\mathbb{R}^N,\,dx)$ is a non-negative potential which $V$ belongs to the reverse H\"older class $B_{q}$ with  $q>\frac{N}{2}$, i.e. the  inequality 
\begin{equation}\label{eq:reverse_classic}
    \left(\frac{1}{|B|}\int_{B}V(x)^{q}\,dx\right)^{1/q} \leq C \frac{1}{|B|}\int_{B}V(x)\,dx
\end{equation}
holds for every ball $B$ in $\mathbb{R}^N$. Define the auxiliary function $\mathbf{m}$ as follows:
\begin{equation}\label{eq:m_classic}
    \frac{1}{\mathbf{m}(x)}=\sup\left\{r>0\,:\,\frac{1}{r^{n-2}}\int_{B(x,r)}V(x)\,dx  \leq 1\right\}.
\end{equation}
The integral defining the function $\mathbf{m}$  was introduced  by Ch. Fefferman (see \cite[p. 146, the assumption of the main lemma]{Fefferman}). The function  is then  used in the well-known Fefferman--Phong inequality (\cite[p. 146]{Fefferman}, see also Shen ~\cite{Shen2},~\cite[Lemma 1.9]{Shen}) which we state  below.
\begin{theorem}[Fefferman--Phong inequality]
There is a constant $C>0$ such that for all $f \in C^1_{c}(\mathbb{R}^N)$ we have
\begin{equation}\label{eq:Fefferman-Phong_classic}
    \int_{\mathbb{R}^N}\mathbf{m}(x)^2|f(x)|^2\,dx \leq C \left(\sum_{j=1}^{N}\int_{\mathbb{R}^N}|\partial_{j}f(x)|^2\,dx+\int_{\mathbb{R}^N}V(x)|f(x)|^2\,dx\right).
\end{equation}
\end{theorem}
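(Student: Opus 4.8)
The plan is to establish the Fefferman–Phong inequality for the classical Schrödinger operator $\mathscr{L} = -\Delta_{\text{eucl}} + V$ with $V \in B_q$, $q > N/2$, via a localization argument on dyadic annuli adapted to the auxiliary function $\mathbf{m}$. The core idea is that on a ball $B$ where $\mathbf{m}$ is roughly constant, say $B = B(x_0, 1/\mathbf{m}(x_0))$, the operator $\mathscr{L}$ is "subcritical" in a scale-invariant sense, so a Poincaré-type estimate together with the defining property of $\mathbf{m}$ yields the bound locally; one then sums over a suitable covering.

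First I would record the elementary but essential properties of $\mathbf{m}$ that follow from the $B_q$ condition: (i) $\mathbf{m}(x)$ is finite and positive for all $x$ (using that $V \not\equiv 0$, or handling $V \equiv 0$ trivially); (ii) the doubling/slowly-varying property $\mathbf{m}(x) \sim \mathbf{m}(y)$ whenever $|x-y| \lesssim 1/\mathbf{m}(x)$, with comparability constants depending only on $N$, $q$, and the $B_q$ constant of $V$ — this is Shen's key lemma and rests on the self-improvement of reverse Hölder classes plus the monotonicity of $r \mapsto r^{2-N}\int_{B(x,r)} V$; and (iii) the consequence that $\int_{B(x_0, r)} V \, dx \le C\, r^{N-2} \mathbf{m}(x_0)^2 \cdot (r\mathbf{m}(x_0))^{\delta}$ type control when $r \ge 1/\mathbf{m}(x_0)$, and conversely $\int_{B(x_0,r)} V\,dx \le r^{N-2}$ when $r \le 1/\mathbf{m}(x_0)$, directly from \eqref{eq:m_classic}.

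The heart of the argument is the local estimate: for $B = B(x_0, r)$ with $r = 1/\mathbf{m}(x_0)$ and $f \in C^1_c$,
\begin{equation*}
\int_B \mathbf{m}(x)^2 |f|^2 \, dx \le C \left( \int_B |\nabla f|^2 \, dx + \int_B V |f|^2 \, dx \right).
\end{equation*}
Since $\mathbf{m} \sim \mathbf{m}(x_0) = 1/r$ on $B$, the left side is $\sim r^{-2} \int_B |f|^2$. Writing $f = (f - f_B) + f_B$ where $f_B$ is the average, the Poincaré inequality controls $r^{-2}\int_B |f - f_B|^2 \le C \int_B |\nabla f|^2$. For the constant part it suffices to show $r^{-2} |f_B|^2 |B| \le C(\int_B |\nabla f|^2 + \int_B V|f|^2)$; this is where the defining inequality $r^{2-N}\int_B V \le 1$ for $r = 1/\mathbf{m}(x_0)$ — or rather its near-failure at scale $r$, i.e. $r^{2-N}\int_{B(x_0,r)} V \gtrsim 1$ by maximality — is used: it gives $r^{-2}|B| \lesssim \int_B V$, so $r^{-2}|f_B|^2|B| \lesssim |f_B|^2 \int_B V \lesssim \int_B V|f|^2 + (\text{oscillation term})$, and the oscillation term is reabsorbed by Poincaré. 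I would then choose a covering $\{B(x_i, 1/\mathbf{m}(x_i))\}$ of $\mathbb{R}^N$ with bounded overlap — guaranteed by the slowly-varying property (ii) — and sum.

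**The main obstacle** I anticipate is the lower bound $r^{2-N}\int_{B(x_0, r)} V \,dx \gtrsim 1$ at the critical scale $r = 1/\mathbf{m}(x_0)$: the supremum in \eqref{eq:m_classic} need not be attained, and one must argue that for $r' $ slightly larger than $1/\mathbf{m}(x_0)$ the integral exceeds $(r')^{N-2}$, then transfer this to scale $1/\mathbf{m}(x_0)$ using the doubling of the measure $V\,dx$ that comes from $V \in B_q \subset A_\infty$. A secondary technical point is ensuring the bounded-overlap covering exists and that the comparison constants in $\mathbf{m}(x) \sim \mathbf{m}(y)$ are uniform; both are standard once (ii) is in hand. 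In the Dunkl setting that the paper actually targets, every one of these steps must be redone with $dx$ replaced by the weighted measure $dw$, $N$ replaced by the homogeneous dimension $\mathbf{N}$, Euclidean balls replaced by balls in the (possibly non-Euclidean) relevant metric, and the gradient replaced by the Dunkl gradient — the Poincaré inequality and the $A_\infty$ theory for $dw$ being the facts one must either cite or establish first.
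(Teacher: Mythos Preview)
The statement you address is the \emph{classical} Fefferman--Phong inequality, which the paper does not actually prove: it is quoted in the introduction as a known result, with the one-line remark that ``the proof of~\eqref{eq:Fefferman-Phong_classic} is based on the usage of the fact that $V \in A_p$ for some $p>1$ and the Poincar\'e inequality~\eqref{eq:Poincare}.'' Your outline --- Poincar\'e on balls of radius $1/\mathbf m(x_0)$ to control the oscillation, the $A_\infty$ property of $V$ together with the near-saturation of the defining condition of $\mathbf m$ to control the average, and summation over a bounded-overlap covering furnished by the slowly-varying property of $\mathbf m$ --- is exactly that standard argument, and is correct.

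The paper's own contribution is the Dunkl analogue, Theorem~\ref{theo:Fefferman-Phong}, and here the route genuinely diverges from what you sketch in your final paragraph. You propose to redo Poincar\'e and $A_\infty$ theory for $dw$; the paper does rebuild the $A_\infty$ theory (Section~3 and Lemma~\ref{lem:E_set}), but it does \emph{not} attempt a Dunkl Poincar\'e inequality. Instead it replaces the splitting $f = (f-f_B) + f_B$ by $\phi_Q f = (\phi_Q f - \psi_Q^A * (\phi_Q f)) + \psi_Q^A * (\phi_Q f)$, where $\psi_Q^A$ is an approximate identity at scale $d(Q)/A$; the first piece is controlled by Plancherel, yielding a factor $A^{-1}d(Q)\sum_j\|T_j(\phi_Q f)\|_{L^2}$ (this is the ``pseudo-Poincar\'e'' step, \eqref{eq:diff_final}), and the second piece is handled by splitting on the set $E_\varepsilon = \{V \le \varepsilon\, d(Q)^{-2}\}$ using the $A_p$ estimate. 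The crucial new difficulty, absent from your classical argument, is that the non-local part of $T_j(\phi_Q f)$ in Lemma~\ref{lem:gradient} spills out of $Q^*$ into its orbit $\mathcal O(Q^*)$ and produces a term $\int_{\mathcal O(Q^*)} m^2|f|^2\,dw$ --- i.e.\ a copy of the left-hand side of the inequality on the right. The paper removes this by an absorption argument in the large parameter $A$, first under the a~priori hypothesis $\int m^2|f|^2\,dw < \infty$, and then by a cutoff approximation for general $f \in \mathcal D(\mathbf Q)$. This absorption mechanism, forced by the reflection terms in the Dunkl operators, has no counterpart in the classical proof you outline.
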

The proof of~\eqref{eq:Fefferman-Phong_classic} is based on the usage of the fact that $V \in A_p$ for some $p>1$ and the Poincar\'e  inequality
\begin{equation}\label{eq:Poincare}
    \frac{1}{|B(x,r)|}\int_{B(x,R)}|f(y)-f_{B(x,r)}|^2\,dy \leq \frac{Cr^2}{|B(x,r)|}\int_{B(x,r)}|\nabla f(y)|^2\,dx.
\end{equation}

The Fefferman--Phong inequality and the function $\mathbf m$ itself are    very useful tools  which are used in  analysis regarding the operator $\mathscr{L}$, e.g.,  in investigating  behavior of its eigenvalues~\cite{Fefferman}, estimating  of the  fundamental solution of the equation $\mathscr Lu=0$ (\cite[Theorem 2.7]{Shen}) and  studying $L^p$-bounds of the operators 
$\nabla \mathscr{L}^{i\gamma}, \, \nabla \mathscr{L}^{-1/2}, \nabla \mathscr{L}^{-1}\nabla, \, \nabla^2 \mathscr{L}^{-1}$ (see Theorems 0.3, 0.4, 0.5, 0.8 in~\cite{Shen}). It was proved  in \cite{ DZ_Revista} (see also~\cite[Theorem 2.11, Proposition 2.16]{DZ_RZM}) that the integral kernel $k_t(x,y)$ of the Schr\"odinger semigroup $e^{-t\mathscr{L}}$ behaves like the classical heat semigroup for $0<t<\mathbf m(x)^{-2}$, while for $t>\mathbf m(x)^{-2}$ has essentially faster decay. These observations allowed Dziubański and Zienkiewicz ~\cite{DZ_Revista} to study the Hardy spaces associated with $\mathscr{L}$ and prove a local character of atoms (see also \cite{DZ_Coll, DZ_Studia}).

The aim of this article is to prove the  Fefferman--Phong inequality for  Dunkl--Schr\"odinger operators and study its applications for describing behavior of the corresponding Dunkl-Schr\"odinger semigroups and their  Hardy spaces  $H^1$.

The Dunkl theory is a generalization of the Euclidean Fourier analysis. It started with the seminal article \cite{Dunkl} and developed extensively afterwards (see e.g. \cite{RoeslerDeJeu}, \cite{Dunkl0}, \cite{Dunkl3}, \cite{Dunkl2}, \cite{GR}, \cite{Roesler2}, \cite{Roesle99}, \cite{Roesler2003}, \cite{ThangaveluXu}, \cite{Trimeche2002}). We refer the reader to lecture notes~\cite{Roesler3} and~\cite{Roesler-Voit}  for more information and references. We fix a  normalized  root system $R$ in $\mathbb R^N$  and a multiplicity function $k\geq 0$ (see Section \ref{sec:preliminaries}). For $\xi \in \mathbb{R}^N$, $N \geq 1$, the Dunkl operators $T_\xi$  are the following $k$-deformations of the directional derivatives $\partial_\xi$ by a  difference operator:
\begin{equation}\label{eq:Dunkl_operator}
     T_\xi f(\mathbf x)= \partial_\xi f(\mathbf x) + \sum_{\alpha\in R} \frac{k(\alpha)}{2}\langle\alpha ,\xi\rangle\frac{f(\mathbf x)-f(\sigma_\alpha (\mathbf x))}{\langle \alpha,\mathbf x\rangle},
\end{equation}
where $\sigma_\alpha$ is the reflection on $\mathbb R^N$ with respect to the hyperspace orthogonal to $\alpha$. The Dunkl operators are generalizations of the partial derivatives (in fact, they are ordinary partial derivatives for $k \equiv 0$), however they are non-local operators. Therefore, in order to obtain counterparts of  classical Euclidean  harmonic analysis  results in the Dunkl setting,  we have to deal with both: local and non-local parts of the operators under consideration. For instance, the question what would be a good   counterpart of Poincare's inequality~\eqref{eq:Poincare} is true in the rational Dunkl setting seems to be an interesting   problem. Recently various different versions of~\eqref{eq:Poincare} were proved (see~\cite{Maslouhi},~\cite{Velicu1},~\cite{Velicu2}).  
The analysis is more complicated if we compose such operators. Furthermore, there are other technical problems and open questions in Dunkl theory. One of them is the  lack of knowledge about boundendess of the so called Dunkl translations $\tau_{\mathbf x}$ on $L^p(dw)$-spaces for $p\ne 2$. It makes analysis of convolution operators more complicated and delicate.

In the present paper we consider the Dunkl--Schr\"odinger operator
\begin{align*}
   L= -\Delta + V \text{ on }\mathbb{R}^N, \, N \geq 1,
\end{align*}
where $V \in L^2_{\text{loc}}(dw)$ is non-negative potential and $\Delta=\sum_{j=1}^N T_{e_j}^2$ is the Dunkl Laplacian. Such operators were recently studied by Amri and Hammi in~\cite{AH} and ~\cite{AH_2}. An example of such operator is the so called Dunkl harmonic oscillator $-\Delta +\| x\|^2$, whose properties  are  better understood (see~\cite{Amri15},~\cite{Hejna},~\cite{Nowak}, ~\cite{Roesler2}, and~\cite{Roesler-Voit}). Let $\mathbf{N}$ be the homogeneous dimension (see~\eqref{eq:homo}). We shall assume that $V$ satisfies an analogue of~\eqref{eq:reverse_classic} with $q>\max(1,\frac{\mathbf{N}}{2})$ (see Subsection~\ref{sec:Schorodinger} for details). In the current paper we prove that a counterpart of the Fefferman--Phong inequality ~\eqref{eq:Fefferman-Phong_classic} is true in the Dunkl setting, which is one of  our main results  (see Theorem~\ref{theo:Fefferman-Phong}). The main difficulty which one faces trying to prove Theorem~\ref{theo:Fefferman-Phong} is the lack of knowledge about the Poincare's inequality, which is the main ingredient of the proof in the classical case. Our idea of the proof is to mix the methods which are known from the theory of non-local operator (see~\cite[proof of Theorem 9.4]{DZ_Studia}), a version of pseudo--Poincare's inequality (which is very close to that in~\cite[Section 5]{Velicu1}), together with a careful analysis of properties of the counterpart of the function $\mathbf{m}$ compared to the structure of the Dunkl operator. The analysis of properties of the counterpart of the function $\mathbf{m}$ (see~\eqref{eq:m}) and the proof of Theorem~\ref{theo:Fefferman-Phong} are the goals of Part~\ref{part:Fefferman-Phong} of the paper.

Part~\ref{part:Hardy} is devoted to the application of the Fefferman--Phong inequality to prove the characterization of the Hardy space $H^{1}_{L}$ associated with the Dunkl--Schr\"odinger operator by the maximal function associated with the semigroup generated by $-\Delta+V$ and by a special atomic decomposition - see Section~\ref{sec:Hardy_pre} for details. This application is inspired by~\cite{DZ_Revista} (see also~\cite{DZ_Studia_2} and~\cite{DZ_Coll}). The atoms for $H^1_L$ have the structure of local atoms in the sense of Goldberg \cite{Goldberg}   with localization adapted to the behavior of the function $m$. So,  in order to obtain our result, we  need  characterizations of a family  local Hardy spaces in the Dunkl setting proved in  ~\cite[Section 5]{Hejna}.

\section{Preliminaries}\label{sec:preliminaries}

\subsection{The basic definitions of the Dunkl theory}
In this section we present basic facts concerning the theory of the Dunkl operators.  For details we refer the reader to~\cite{Dunkl},~\cite{Roesler3}, and~\cite{Roesler-Voit}. 

We consider the Euclidean space $\mathbb R^N$ with the scalar product $\langle\mathbf x,\mathbf y\rangle=\sum_{j=1}^N x_jy_j
$, where $\mathbf x=(x_1,...,x_N)$, $\mathbf y=(y_1,...,y_N)$, and the norm $\| \mathbf x\|^2=\langle \mathbf x,\mathbf x\rangle$. For a nonzero vector $\alpha\in\mathbb R^N$,  the reflection $\sigma_\alpha$ with respect to the hyperplane $\alpha^\perp$ orthogonal to $\alpha$ is given by
\begin{align*}
\sigma_\alpha (\mathbf x)=\mathbf x-2\frac{\langle \mathbf x,\alpha\rangle}{\| \alpha\| ^2}\alpha.
\end{align*}
In this paper we fix a normalized root system in $\mathbb R^N$, that is, a finite set  $R\subset \mathbb R^N\setminus\{0\}$ such that $R \cap \alpha \mathbb{R} = \{\pm \alpha\}$,  $\sigma_\alpha (R)=R$, and $\|\alpha\|=\sqrt{2}$ for all $\alpha\in R$. The finite group $G$ generated by the reflections $\sigma_\alpha \in R$ is called the {\it Weyl group} ({\it reflection group}) of the root system. A~{\textit{multiplicity function}} is a $G$-invariant function $k:R\to\mathbb C$ which will be fixed and $\geq 0$  throughout this paper. 
 Let
\begin{equation}\label{eq:measure_formula}
dw(\mathbf x)=\prod_{\alpha\in R}|\langle \mathbf x,\alpha\rangle|^{k(\alpha)}\, d\mathbf x
\end{equation} 
be  the associated measure in $\mathbb R^N$, where, here and subsequently, $d\mathbf x$ stands for the Lebesgue measure in $\mathbb R^N$.
We denote by 
\begin{equation}\label{eq:homo}
\mathbf{N}=N+\sum_{\alpha \in R} k(\alpha)
\end{equation}
the homogeneous dimension of the system. Clearly, 
\begin{align*} w(B(t\mathbf x, tr))=t^{\mathbf N}w(B(\mathbf x,r)) \ \ \text{\rm for all } \mathbf x\in\mathbb R^N, \ t,r>0,   
\end{align*}
 where $B(\mathbf x, r)=\{\mathbf y\in\mathbb R^N: \|\mathbf y-\mathbf x\|<r\}$. Moreover, 
\begin{align*}
\int_{\mathbb R^N} f(\mathbf x)\, dw(\mathbf x)=\int_{\mathbb R^N} t^{-\mathbf N} f(\mathbf x\slash t)\, dw(\mathbf x)\ \ \text{for} \ f\in L^1(dw)  \   \text{\rm and} \  t>0.
\end{align*}
Observe that there is a constant $C>0$ such that 
\begin{equation}\label{eq:balls_asymp} 
C^{-1}w(B(\mathbf x,r))\leq  r^{N}\prod_{\alpha \in R} (|\langle \mathbf x,\alpha\rangle |+r)^{k(\alpha)}\leq C w(B(\mathbf x,r)),
\end{equation}
so $dw(\mathbf x)$ is doubling, that is, there is a constant $C>0$ such that
\begin{equation}\label{eq:doubling} w(B(\mathbf x,2r))\leq C w(B(\mathbf x,r)) \ \ \text{ for all } \mathbf x\in\mathbb R^N, \ r>0.
\end{equation}
Moreover, there exists a constant $C\ge1$ such that,
for every $\mathbf{x}\in\mathbb{R}^N$ and for every $r_2\ge r_1>0$,
\begin{equation}\label{eq:growth}
C^{-1}\Big(\frac{r_2}{r_1}\Big)^{N}\leq\frac{{w}(B(\mathbf{x},r_2))}{{w}(B(\mathbf{x},r_1))}\leq C \Big(\frac{r_2}{r_1}\Big)^{\mathbf{N}}.
\end{equation}

For a measurable subset $A$ of $\mathbb{R}^N$ we define 
\begin{equation}\label{eq:orbit_of_A}
    \mathcal{O}(A)=\{\sigma_{\alpha}(\mathbf{x})\,:\, \mathbf{x} \in A, \, \alpha \in R\}.
\end{equation}
Clearly, by~\eqref{eq:balls_asymp}, for all $\mathbf{x} \in \mathbb{R}^N$ and $r>0$ we get
\begin{equation}\label{eq:ball_orbit_compare}
    w(\mathcal{O}(B(\mathbf{x},r))) \leq |G|w(B(\mathbf{x},r)).
\end{equation}

For $\xi \in \mathbb{R}^N$, the {\it Dunkl operators} $T_\xi$  are the following $k$-deformations of the directional derivatives $\partial_\xi$ by a  difference operator:
\begin{equation}\label{eq:T_def}
     T_\xi f(\mathbf x)= \partial_\xi f(\mathbf x) + \sum_{\alpha\in R} \frac{k(\alpha)}{2}\langle\alpha ,\xi\rangle\frac{f(\mathbf x)-f(\sigma_\alpha(\mathbf{x}))}{\langle \alpha,\mathbf x\rangle}.
\end{equation}
The Dunkl operators $T_{\xi}$, which were introduced in~\cite{Dunkl}, commute and are skew-symmetric with respect to the $G$-invariant measure $dw$.

For fixed $\mathbf y\in\mathbb R^N$ the {\it Dunkl kernel} $E(\mathbf x,\mathbf y)$ is the unique analytic solution to the system
\begin{equation}\label{eq:Dunkl_kernel_definition}
    T_\xi f=\langle \xi,\mathbf y\rangle f, \ \ f(0)=1.
\end{equation}
The function $E(\mathbf x ,\mathbf y)$, which generalizes the exponential  function $e^{\langle \mathbf x,\mathbf y\rangle}$, has the unique extension to a holomorphic function on $\mathbb C^N\times \mathbb C^N$. Moreover, it satisfies $E(\mathbf{x},\mathbf{y})=E(\mathbf{y},\mathbf{x})$ for all $\mathbf{x},\mathbf{y} \in \mathbb{C}^N$.

Let $\{e_j\}_{1 \leq j \leq N}$ denote the canonical orthonormal basis in $\mathbb R^N$ and let $T_j=T_{e_j}$. 
In our further consideration we shall need the following lemma.
\begin{lemma}
For all $\mathbf{x} \in \mathbb{R}^N$, $\mathbf{z} \in \mathbb{C}^N$ and $\nu \in \mathbb{N}_0^{N}$ we have
$$|\partial^{\nu}_{\mathbf{z}}E(\mathbf{x},\mathbf{z})| \leq \|\mathbf{x}\|^{|\nu|}\exp(\|\mathbf{x}\|\|{\rm Re \;}\mathbf{z}\|).$$
In particular, 
\begin{align*} | E(i\xi, \mathbf x)|\leq 1 \quad \text{ for all } \xi,\mathbf x\in \mathbb R^N.
\end{align*}
\end{lemma}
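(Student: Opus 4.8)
The plan is to use Rösler's integral representation of the Dunkl kernel by a compactly supported probability measure. Recall (see~\cite{Roesler3} or~\cite{Roesler-Voit}) that for every $\mathbf{x}\in\mathbb{R}^N$ there is a probability measure $\mu_{\mathbf{x}}$ on $\mathbb{R}^N$, supported in the convex hull $\mathrm{conv}\{g\mathbf{x}:g\in G\}$ of the $G$-orbit of $\mathbf{x}$, such that
$$E(\mathbf{x},\mathbf{z})=\int_{\mathbb{R}^N}e^{\langle\xi,\mathbf{z}\rangle}\,d\mu_{\mathbf{x}}(\xi)\qquad\text{for all }\mathbf{z}\in\mathbb{C}^N.$$
Since every element of $G$ acts on $\mathbb{R}^N$ as an orthogonal transformation, $\|g\mathbf{x}\|=\|\mathbf{x}\|$ for all $g\in G$, and hence the support of $\mu_{\mathbf{x}}$ is contained in the closed ball $\overline{B(0,\|\mathbf{x}\|)}$; in other words, $\|\xi\|\le\|\mathbf{x}\|$ for $\mu_{\mathbf{x}}$-almost every $\xi$.

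First I would justify differentiating under the integral sign. Because $\mu_{\mathbf{x}}$ has compact support and, for $\mathbf{z}$ ranging over a compact subset of $\mathbb{C}^N$, the function $\xi\mapsto\partial^{\nu}_{\mathbf{z}}e^{\langle\xi,\mathbf{z}\rangle}=\xi^{\nu}e^{\langle\xi,\mathbf{z}\rangle}$ is bounded uniformly in $\xi$ on $\supp\mu_{\mathbf{x}}$, the standard theorem on differentiation of parameter integrals applies and yields
$$\partial^{\nu}_{\mathbf{z}}E(\mathbf{x},\mathbf{z})=\int_{\mathbb{R}^N}\xi^{\nu}e^{\langle\xi,\mathbf{z}\rangle}\,d\mu_{\mathbf{x}}(\xi).$$
Next I would bound the integrand pointwise on $\supp\mu_{\mathbf{x}}$: one has $|\xi^{\nu}|=\prod_{j}|\xi_j|^{\nu_j}\le\|\xi\|^{|\nu|}\le\|\mathbf{x}\|^{|\nu|}$, while $|e^{\langle\xi,\mathbf{z}\rangle}|=e^{\langle\xi,\,{\rm Re}\,\mathbf{z}\rangle}\le e^{\|\xi\|\,\|{\rm Re}\,\mathbf{z}\|}\le e^{\|\mathbf{x}\|\,\|{\rm Re}\,\mathbf{z}\|}$ by Cauchy--Schwarz. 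Since $\mu_{\mathbf{x}}$ is a probability measure, integrating these estimates gives exactly $|\partial^{\nu}_{\mathbf{z}}E(\mathbf{x},\mathbf{z})|\le\|\mathbf{x}\|^{|\nu|}e^{\|\mathbf{x}\|\,\|{\rm Re}\,\mathbf{z}\|}$.

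For the displayed special case I would take $\nu=0$ and $\mathbf{z}=i\xi$ with $\xi\in\mathbb{R}^N$: then ${\rm Re}\,\mathbf{z}=0$, so $|E(\mathbf{x},i\xi)|\le1$, and the symmetry $E(\mathbf{x},\mathbf{y})=E(\mathbf{y},\mathbf{x})$ recorded above gives $|E(i\xi,\mathbf{x})|=|E(\mathbf{x},i\xi)|\le1$. I do not expect a genuine obstacle here: the only non-elementary ingredient is Rösler's positivity theorem (equivalently, the existence of the representing measure $\mu_{\mathbf{x}}$), which I take as a known black box; everything else is differentiation under the integral sign and a crude size estimate. If one wished to avoid invoking the representing measure, one could instead deduce the bound from known growth estimates for $E$ together with Cauchy's inequalities for holomorphic functions, at the cost of a larger constant, but the argument via $\mu_{\mathbf{x}}$ is shorter and yields the sharp constant stated.
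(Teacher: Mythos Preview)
Your argument is correct and is essentially the proof behind \cite[Corollary 5.3]{Roesle99}, which is exactly what the paper cites; the paper does not give an independent proof but simply refers to R\"osler's result. You have spelled out that argument---the representing measure from R\"osler's positivity theorem, differentiation under the integral, and the elementary pointwise bounds---so your proposal matches the cited proof in substance.
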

\begin{proof}
See~\cite[Corollary 5.3]{Roesle99}.
\end{proof}

\begin{corollary}\label{coro:Roesler}
There is a constant $C>0$ such that for all $\mathbf{x},\xi \in \mathbb{R}^N$ we have
\begin{equation}
|E(i\xi,\mathbf{x})-1| \leq C\|\mathbf{x}\|\|\xi\|.
\end{equation}
\end{corollary}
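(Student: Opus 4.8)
The plan is to deduce the estimate from the first-order derivative bound in the preceding lemma by applying the fundamental theorem of calculus along the segment $t\mapsto it\xi$, $t\in[0,1]$. First I would record the base value: by the symmetry $E(\mathbf{x},\mathbf{y})=E(\mathbf{y},\mathbf{x})$ and the normalization $f(0)=1$ in the defining system~\eqref{eq:Dunkl_kernel_definition}, one has $E(i\mathbf{0},\mathbf{x})=E(\mathbf{0},\mathbf{x})=E(\mathbf{x},\mathbf{0})=1$. Since $E$ extends holomorphically to $\mathbb{C}^N\times\mathbb{C}^N$, the function $\gamma(t)=E(i t\xi,\mathbf{x})=E(\mathbf{x},i t\xi)$ is smooth on $[0,1]$, and the chain rule gives
$$E(i\xi,\mathbf{x})-1=\gamma(1)-\gamma(0)=\int_0^1\gamma'(t)\,dt=\int_0^1\sum_{j=1}^N i\xi_j\,(\partial_{z_j}E)(\mathbf{x},i t\xi)\,dt.$$

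Next I would invoke the lemma with multi-index $\nu=e_j$ and $\mathbf{z}=i t\xi$; since $\mathrm{Re}\,\mathbf{z}=0$, the exponential factor equals $1$ and we obtain $|(\partial_{z_j}E)(\mathbf{x},i t\xi)|\leq\|\mathbf{x}\|$ for every $t\in[0,1]$ and every $j$. Combining this with the Cauchy--Schwarz estimate $\sum_{j=1}^N|\xi_j|\leq\sqrt{N}\,\|\xi\|$ yields $|\gamma'(t)|\leq\sqrt{N}\,\|\mathbf{x}\|\,\|\xi\|$ uniformly in $t$, and integrating over $[0,1]$ gives $|E(i\xi,\mathbf{x})-1|\leq\sqrt{N}\,\|\mathbf{x}\|\,\|\xi\|$. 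Thus the claim holds with $C=\sqrt{N}$.

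There is no genuine obstacle in this argument; the two points deserving a little care are, first, matching the variable slots in the lemma — this is why I use the symmetry of $E$ so as to differentiate in its second argument, to which the lemma directly applies — and, second, noticing that along the purely imaginary segment $\mathbf{z}=it\xi$ the factor $\exp(\|\mathbf{x}\|\,\|\mathrm{Re}\,\mathbf{z}\|)$ is identically $1$, which is exactly what produces a bound linear in $\|\xi\|$ rather than an exponential one.
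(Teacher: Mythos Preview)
Your argument is correct and is exactly the natural way to deduce the corollary from the preceding lemma; the paper itself omits the proof and simply records the estimate as an immediate consequence. Your use of the symmetry $E(\mathbf{x},\mathbf{z})=E(\mathbf{z},\mathbf{x})$ to put the differentiation in the second slot, together with the observation that $\mathrm{Re}(it\xi)=0$ kills the exponential factor, is precisely the intended mechanism.
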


The \textit{Dunkl transform}
  \begin{align*}\mathcal F f(\xi)=c_k^{-1}\int_{\mathbb R^N} E(-i\xi, \mathbf x)f(\mathbf x)\, dw(\mathbf x),
  \end{align*}
  where
  $$c_k=\int_{\mathbb{R}^N}e^{-\frac{\|\mathbf{x}\|^2}{2}}\,dw(\mathbf{x})>0,$$
   originally defined for $f\in L^1(dw)$, is an isometry on $L^2(dw)$, i.e.,
   \begin{equation}\label{eq:Plancherel}
       \|f\|_{L^2(dw)}=\|\mathcal{F}f\|_{L^2(dw)} \text{ for all }f \in L^2(dw),
   \end{equation}
and preserves the Schwartz class of functions $\mathcal S(\mathbb R^N)$ (see \cite{deJeu}). Its inverse $\mathcal F^{-1}$ has the form
  \begin{align*} \mathcal F^{-1} g(x)=c_k^{-1}\int_{\mathbb R^N} E(i\xi, \mathbf x)g(\xi)\, dw(\xi).
  \end{align*}
The {\it Dunkl translation\/} $\tau_{\mathbf{x}}f$ of a function $f\in\mathcal{S}(\mathbb{R}^N)$ by $\mathbf{x}\in\mathbb{R}^N$ is defined by
\begin{align*}
\tau_{\mathbf{x}} f(\mathbf{y})=c_k^{-1} \int_{\mathbb{R}^N}{E}(i\xi,\mathbf{x})\,{E}(i\xi,\mathbf{y})\,\mathcal{F}f(\xi)\,{dw}(\xi).
\end{align*}
  It is a contraction on $L^2(dw)$, however it is an open  problem  if the Dunkl translations are bounded operators on $L^p(dw)$ for $p\ne 2$.
  
  {The \textit{Dunkl convolution\/} $f*g$ of two reasonable functions (for instance Schwartz functions) is defined by
$$
(f*g)(\mathbf{x})=c_k\,\mathcal{F}^{-1}[(\mathcal{F}f)(\mathcal{F}g)](\mathbf{x})=\int_{\mathbb{R}^N}(\mathcal{F}f)(\xi)\,(\mathcal{F}g)(\xi)\,E(\mathbf{x},i\xi)\,dw(\xi) \text{ for }\mathbf{x}\in\mathbb{R}^N,
$$
or, equivalently, by}
\begin{align*}
  {(f{*}g)(\mathbf{x})=\int_{\mathbb{R}^N}f(\mathbf{y})\,\tau_{\mathbf{x}}g(-\mathbf{y})\,{dw}(\mathbf{y})=\int_{\mathbb R^N} f(\mathbf y)g(\mathbf x,\mathbf y) \,dw(\mathbf{y}) \text{ for all } \mathbf{x}\in\mathbb{R}^N},  
\end{align*}
where, here and subsequently, $g(\mathbf x,\mathbf y)=\tau_{\mathbf x}g(-\mathbf y)$. 

\subsection{Dunkl Laplacian and Dunkl heat semigroup} The {\it Dunkl Laplacian} associated with $R$ and $k$  is the differential-difference operator $\Delta=\sum_{j=1}^N T_{j}^2$, which  acts on $C^2(\mathbb{R}^N)$-functions by

\begin{align*}
    \Delta f(\mathbf x)=\Delta_{\rm eucl} f(\mathbf x)+\sum_{\alpha\in R} k(\alpha) \delta_\alpha f(\mathbf x),
\end{align*}
\begin{align*}
    \delta_\alpha f(\mathbf x)=\frac{\partial_\alpha f(\mathbf x)}{\langle \alpha , \mathbf x\rangle} - \frac{\|\alpha\|^2}{2} \frac{f(\mathbf x)-f(\sigma_\alpha \mathbf x)}{\langle \alpha, \mathbf x\rangle^2}.
\end{align*}
Obviously, $\mathcal F(\Delta f)(\xi)=-\| \xi\|^2\mathcal Ff(\xi)$. The operator $\Delta$ is essentially self-adjoint on $L^2(dw)$ (see for instance \cite[Theorem\;3.1]{AH}) and generates the semigroup $H_t$  of linear self-adjoint contractions on $L^2(dw)$. The semigroup has the form
  \begin{align*}
  H_t f(\mathbf x)=\mathcal F^{-1}(e^{-t\|\xi\|^2}\mathcal Ff(\xi))(\mathbf x)=\int_{\mathbb R^N} h_t(\mathbf x,\mathbf y)f(\mathbf y)\, dw(\mathbf y),
  \end{align*}
  where the heat kernel 
  \begin{equation}\label{eq:heat_def}
      h_t(\mathbf x,\mathbf y)=\tau_{\mathbf x}h_t(-\mathbf y), \ \ h_t(\mathbf x)=\mathcal F^{-1} (e^{-t\|\xi\|^2})(\mathbf x)=c_k^{-1} (2t)^{-\mathbf N\slash 2}e^{-\| \mathbf x\|^2\slash (4t)}
  \end{equation}
  is a $C^\infty$-function of all variables $\mathbf x,\mathbf y \in \mathbb{R}^N$, $t>0$, and satisfies \begin{align*} 0<h_t(\mathbf x,\mathbf y)=h_t(\mathbf y,\mathbf x),
  \end{align*}
 \begin{equation} \label{eq:h_integral_1}
    \int_{\mathbb R^N} h_t(\mathbf x,\mathbf y)\, dw(\mathbf y)=1.
 \end{equation}

Let 
$$d(\mathbf x,\mathbf y)=\min_{\sigma\in G}\| \sigma(\mathbf x)-\mathbf y\|$$
be the distance of the orbit of $\mathbf x$ to the orbit of $\mathbf y$. Let us denote
\begin{equation}\label{eq:mathcal_G}
     \mathcal G_t(\mathbf x,\mathbf y)=(\max (w(B(\mathbf x,t)),w(B(\mathbf y, t))))^{-1}\exp\Big(-\frac {d(\mathbf x,\mathbf y)^2}{t}\Big).
 \end{equation}
  We shall need the following estimates for $h_t(\mathbf x,\mathbf y)$ - the proof can be found in~\cite[Theorem 4.1]{ADzH} and~\cite[Theorem 3.1]{DzH1}. 
\begin{theorem}\label{theorem:heat}
There are constants $C,c>0$ such that for all $\mathbf{x},\mathbf{y} \in \mathbb{R}^N$ and $t>0$ we have
\begin{equation}\label{eq:Gauss}
h_t(\mathbf{x},\mathbf{y}) \leq C\Big(1+\frac{\|\mathbf{x}-\mathbf{y}\|}{t}\Big)^{-2}\mathcal{G}_{t/c}(\mathbf{x},\mathbf{y}).
\end{equation}
\end{theorem}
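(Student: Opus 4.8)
The natural starting point is the positive integral representation of Dunkl translations of radial functions coming from Rösler's product formula (see \cite{Roesler2003}): since $h_t$ is radial, for each $\mathbf x\in\mathbb R^N$ there is a probability measure $\mu_{\mathbf x}$ supported in the convex hull of the $G$-orbit $G\mathbf x=\{\sigma(\mathbf x):\sigma\in G\}$ such that
\[
h_t(\mathbf x,\mathbf y)=c_k^{-1}(2t)^{-\mathbf N/2}\int_{\mathbb R^N}\exp\Big(-\frac{\|\mathbf x\|^2+\|\mathbf y\|^2-2\langle\mathbf y,\eta\rangle}{4t}\Big)\,d\mu_{\mathbf x}(\eta).
\]
Writing $\eta=\sum_{\sigma\in G}\lambda_\sigma\,\sigma(\mathbf x)$ with $\lambda_\sigma\ge0$ and $\sum_\sigma\lambda_\sigma=1$, the exponent equals $\sum_\sigma\lambda_\sigma\|\mathbf y-\sigma(\mathbf x)\|^2\ge d(\mathbf x,\mathbf y)^2$, which yields both positivity of $h_t$ and the crude bound $h_t(\mathbf x,\mathbf y)\le C t^{-\mathbf N/2}\exp(-d(\mathbf x,\mathbf y)^2/(4t))$. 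Moreover, using \eqref{eq:balls_asymp}, the $G$-invariance of $w$, and \eqref{eq:growth}, the symmetric factor $\big(w(B(\mathbf x,\sqrt t))\,w(B(\mathbf y,\sqrt t))\big)^{-1/2}$ and the $\max$-factor appearing in $\mathcal G$ differ only by a power of $1+d(\mathbf x,\mathbf y)/\sqrt t$, which the Gaussian absorbs after shrinking $c$; so it suffices to prove a bound with the symmetric volume factor.

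Beyond the crude bound, two things must be gained: (i) replacing $t^{-\mathbf N/2}$ by the genuinely smaller $w(B(\mathbf x,\sqrt t))^{-1}$, which matters exactly when $\mathbf x$ lies at distance $\gg\sqrt t$ from every wall $\alpha^\perp$; and (ii) the additional polynomial factor $(1+\|\mathbf x-\mathbf y\|/\sqrt t)^{-2}$ in the \emph{Euclidean} distance. For (i) I would split according to $\mathrm{dist}(\mathbf x,\bigcup_{\alpha}\alpha^\perp)$: if this is $\lesssim\sqrt t$ then $w(B(\mathbf x,\sqrt t))\approx t^{\mathbf N/2}$ by \eqref{eq:balls_asymp} and the crude bound already suffices; if it is $\gg\sqrt t$ then on the ball $B(\mathbf x,c_0\,\mathrm{dist}(\mathbf x,\bigcup_\alpha\alpha^\perp))$ the weight $w$ is comparable to a constant and the difference part $\delta_\alpha u=\frac{\partial_\alpha u}{\langle\alpha,\mathbf x\rangle}-\frac{\|\alpha\|^2}{2}\frac{u(\mathbf x)-u(\sigma_\alpha\mathbf x)}{\langle\alpha,\mathbf x\rangle^2}$ of $\Delta$ has coefficients of size $\approx\mathrm{dist}(\mathbf x,\bigcup_\alpha\alpha^\perp)^{-2}$, so up to time comparable to $\mathrm{dist}(\mathbf x,\bigcup_\alpha\alpha^\perp)^2$ the semigroup should behave like the Euclidean heat semigroup on the weighted ball, giving $h_t(\mathbf x,\mathbf y)\lesssim w(B(\mathbf x,\sqrt t))^{-1}\exp(-\|\mathbf x-\mathbf y\|^2/(ct))$ for $\|\mathbf x-\mathbf y\|\lesssim\sqrt t$. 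Making this precise for the \emph{nonlocal} operator — via a frozen-coefficient parametrix, or by first establishing the sharp on-diagonal bound $h_t(\mathbf x,\mathbf x)\approx w(B(\mathbf x,\sqrt t))^{-1}$ (using \eqref{eq:h_integral_1}) and upgrading it to off-diagonal Gaussian decay by a Grigor'yan-type self-improvement — is one part of the technical heart, equivalent to controlling how fast $\mu_{\mathbf x}$ decays near the points $\sigma(\mathbf x)$.

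The step I expect to be the main obstacle is (ii). The factor $(1+\|\mathbf x-\mathbf y\|/\sqrt t)^{-2}$ is invisible to the orbit distance $d(\mathbf x,\mathbf y)$ and records that heat travels from a $\sqrt t$-neighborhood of $\mathbf x$ to a $\sqrt t$-neighborhood of a reflection $\sigma_\alpha(\mathbf x)\ne\mathbf x$ only through the difference term, whose effective strength is governed by $|\langle\alpha,\mathbf x\rangle|\approx\mathrm{dist}(\mathbf x,\alpha^\perp)$. Quantitatively, in the regime $d(\mathbf x,\mathbf y)\lesssim\sqrt t\ll\|\mathbf x-\mathbf y\|\approx\mathrm{dist}(\mathbf x,\alpha^\perp)$ one must improve the crude estimate by a positive power of $\sqrt t/\|\mathbf x-\mathbf y\|$. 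I would attempt this either by a fine analysis of $\mu_{\mathbf x}$ near the part of $G\mathbf x$ reached by crossing $\alpha^\perp$ — exploiting Rösler's inductive/rank-one description of $\mu_{\mathbf x}$ together with the product structure \eqref{eq:measure_formula} of $w$ — or, more robustly, by iterating the Chapman--Kolmogorov identity $h_t(\mathbf x,\mathbf y)=\int_{\mathbb R^N}h_{t/2}(\mathbf x,\mathbf z)\,h_{t/2}(\mathbf z,\mathbf y)\,dw(\mathbf z)$: inserting the crude bound and the near-diagonal estimate of step (i) and tracking how a single crossing of a wall contributes a factor comparable to $\sqrt t/|\langle\alpha,\mathbf x\rangle|$. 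Once (i) and (ii) are in hand, combining them with the Gaussian decay in $d$ from the crude bound and the elementary volume comparisons of the first paragraph yields \eqref{eq:Gauss}.
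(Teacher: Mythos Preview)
The paper does not prove this theorem; it refers the reader to \cite[Theorem~4.1]{ADzH} and \cite[Theorem~3.1]{DzH1}. Your starting point --- R\"osler's positive integral representation and the resulting crude bound $h_t(\mathbf x,\mathbf y)\le Ct^{-\mathbf N/2}\exp(-d(\mathbf x,\mathbf y)^2/(4t))$ --- is correct and is the same as in those references.

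What you have written, however, is a program rather than a proof, and each of the two proposed refinements has a genuine gap. For (i), the frozen-coefficient parametrix and Grigor'yan self-improvement techniques you invoke are designed for \emph{local} diffusions; you yourself flag the nonlocality of $\Delta$ as ``the technical heart'' but offer no mechanism for dealing with it, and the cited works do not go this way --- they extract the correct volume factor $w(B(\mathbf x,\sqrt t))^{-1}$ directly from the structure of the R\"osler measure $\mu_{\mathbf x}$ together with \eqref{eq:measure_formula}--\eqref{eq:balls_asymp}, with no semigroup iteration. For (ii), the Chapman--Kolmogorov suggestion does not close: inserting the crude bound on one factor and the near-diagonal estimate of (i) on the other into $h_t=\int h_{t/2}\,h_{t/2}\,dw$ reproduces a bound of the same shape, because in the relevant regime $d(\mathbf x,\mathbf y)\lesssim\sqrt t\ll\|\mathbf x-\mathbf y\|$ every intermediate point $\mathbf z$ still satisfies $\max(\|\mathbf x-\mathbf z\|,\|\mathbf z-\mathbf y\|)\gtrsim\|\mathbf x-\mathbf y\|$ while both orbit distances may remain small, so no polynomial gain in $\|\mathbf x-\mathbf y\|$ can appear without an \emph{a priori} improvement on one side. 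The argument in \cite{DzH1} for the extra factor $(1+\|\mathbf x-\mathbf y\|/\sqrt t)^{-2}$ is closer to your first alternative --- a pointwise analysis of how much mass $\mu_{\mathbf x}$ places near the non-identity images in $G\mathbf x$ --- but it rests on concrete quantitative estimates for the R\"osler measure that are themselves the substance of that paper and that you have not supplied.
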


Theorem~\ref{theorem:heat} imply the following Lemma (see~\cite[Corollary 3.5]{DzH1}).

\begin{lemma}\label{lem:nonradial_estimation}
Suppose that $\varphi \in C_c^{\infty}(\mathbb{R}^N)$ is radial and supported by the unit ball. Then there is $C>0$ such that for all $\mathbf{x},\mathbf{y}\in \mathbb{R}^{N}$ and $t>0$ we have
\begin{align*}
    &|\tau_{\mathbf x}\varphi(-\mathbf y)| \leq C \Big(1+\frac{\|\mathbf{x}-\mathbf{y}\|}{t}\Big)^{-2}(\max(w(B(\mathbf{x},t)),w(B(\mathbf{y},t))))^{-1}\chi_{[0,1]}(d(\mathbf{x},\mathbf{y})/t).
\end{align*}
\end{lemma}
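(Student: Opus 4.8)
The plan is to deduce the pointwise bound from the Gaussian estimate of Theorem~\ref{theorem:heat} by writing the Dunkl convolution kernel of $\varphi$ through the heat semigroup. First, by rescaling we may take $t=1$: the Dunkl translation obeys $\tau_{\mathbf x}f_s(-\mathbf y)=s^{-\mathbf N}\tau_{\mathbf x/s}f(-\mathbf y/s)$ for $f_s(\mathbf z)=s^{-\mathbf N}f(\mathbf z/s)$ (using $E(i\eta/s,\mathbf x)=E(i\eta,\mathbf x/s)$), and $w(B(\cdot,s))$, $d(\cdot,\cdot)$, $\|\cdot\|$ scale accordingly, so the inequality for general $t$ follows from the one at scale $1$ applied to a dilate of $\varphi$. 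Thus it suffices to prove that $\tau_{\mathbf x}\varphi(-\mathbf y)$ vanishes unless $d(\mathbf x,\mathbf y)\le 1$ and that on this set
$$|\tau_{\mathbf x}\varphi(-\mathbf y)|\le C\,(1+\|\mathbf x-\mathbf y\|)^{-2}\,w(B(\mathbf x,1))^{-1};$$
by \eqref{eq:balls_asymp} and the $G$-invariance of $k$, $w(B(\mathbf x,1))\approx w(B(\mathbf y,1))$ once $d(\mathbf x,\mathbf y)\le1$, so this is equivalent to the asserted estimate. The support statement is the support theorem for Dunkl translations of radial compactly supported functions: for $\varphi$ radial with $\supp\varphi\subset B(0,1)$, Rösler's positive radial product formula represents $\tau_{\mathbf x}\varphi(-\mathbf y)$ as an integral of $\varphi$ against a probability measure supported in $\{\mathbf z:d(\mathbf x,\mathbf z)\le1\}$; in particular $|\tau_{\mathbf x}\varphi(-\mathbf y)|\le\|\varphi\|_{L^\infty}$ always.

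For the size bound I would subordinate against the heat semigroup. Since $\varphi$ is radial, the reflection part of $\Delta$ annihilates it, so for every $M\in\mathbb N$ the function $(-\Delta)^{M}\varphi$ is again radial, of class $C^{\infty}_c$, supported in $B(0,1)$, and $\mathcal F\big((-\Delta)^{M}\varphi\big)(\xi)=\|\xi\|^{2M}\mathcal F\varphi(\xi)$ vanishes at the origin. Applying $\lambda^{-M}=\tfrac1{(M-1)!}\int_0^\infty r^{M-1}e^{-r\lambda}\,dr$ in the spectral calculus of $-\Delta$ (the zero eigenvalue is harmless because the factor $\|\xi\|^{2M}$ cancels it) gives $\varphi=\tfrac1{(M-1)!}\int_0^\infty r^{M-1}H_r\big((-\Delta)^{M}\varphi\big)\,dr$, hence on the level of convolution kernels
$$\tau_{\mathbf x}\varphi(-\mathbf y)=\frac1{(M-1)!}\int_0^\infty r^{M-1}\Big(\int_{\mathbb R^N}h_r(\mathbf x,\mathbf u)\,\tau_{\mathbf u}\big((-\Delta)^{M}\varphi\big)(-\mathbf y)\,dw(\mathbf u)\Big)\,dr .$$
Into this one inserts the Gaussian bound of Theorem~\ref{theorem:heat} for $h_r$, together with a bound of the required type for the auxiliary kernel $\tau_{\mathbf u}\big((-\Delta)^{M}\varphi\big)(-\mathbf y)$ — which is again the convolution kernel of a radial $C^{\infty}_c$ function supported in the unit ball and, via the analysis of Dunkl translations of radial bumps (ultimately resting on Rösler's formula), is controlled by $C\max\!\big(w(B(\mathbf u,1)),w(B(\mathbf y,1))\big)^{-1}\chi_{[0,1]}(d(\mathbf u,\mathbf y))$, possibly times an extra factor $(1+\|\mathbf u-\mathbf y\|)^{-L}$.

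Next I would write $\{d(\mathbf u,\mathbf y)\le1\}=\bigcup_{\sigma\in G}B(\sigma\mathbf y,1)$ and, using the $G$-invariance of $h_r$ and of $dw$ (substitute $\mathbf u=\sigma\mathbf v$, $h_r(\mathbf x,\sigma\mathbf v)=h_r(\sigma^{-1}\mathbf x,\mathbf v)$), reduce each coset to an integral of $h_r(\sigma^{-1}\mathbf x,\mathbf v)$ over $\mathbf v\in B(\mathbf y,1)$. These are estimated with Theorem~\ref{theorem:heat}, the doubling and growth properties \eqref{eq:doubling}, \eqref{eq:growth}, the normalization \eqref{eq:h_integral_1}, and the elementary inequalities $d(\sigma^{-1}\mathbf x,\mathbf v)=d(\mathbf x,\mathbf v)\ge d(\mathbf x,\mathbf y)-1$ and, on the identity coset, $\|\mathbf x-\mathbf v\|\ge\tfrac12\|\mathbf x-\mathbf y\|$ when $\|\mathbf x-\mathbf y\|\ge2$. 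After the inner integration one is left with an integral in $r$ of the shape $\int_0^\infty r^{M-1}\big(1+\tfrac{\|\mathbf x-\mathbf y\|}{r}\big)^{-2}\Theta(r)\,dr$, where $\Theta$ is assembled from ratios of $w$-volumes of balls of radii $1$, $r$, $\|\mathbf x-\mathbf y\|$ about $\mathbf x$ and $\mathbf y$; for a suitable choice of $M$ (and $L$) this converges and is bounded by $C(M-1)!\,(1+\|\mathbf x-\mathbf y\|)^{-2}w(B(\mathbf x,1))^{-1}$, which is the claim.

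The step I expect to be the real obstacle is this last one: producing the factor $(1+\|\mathbf x-\mathbf y\|)^{-2}$ uniformly over the whole support $\{d(\mathbf x,\mathbf y)\le1\}$. For pairs with $d(\mathbf x,\mathbf y)$ small but $\|\mathbf x-\mathbf y\|$ large — that is, $\mathbf x$ and $\mathbf y$ in different Weyl chambers — the Gaussian factor $\exp(-c\,d(\mathbf x,\mathbf u)^2/r)$ gives no decay (one may have $d(\mathbf x,\mathbf u)=0$), so the decay in $\|\mathbf x-\mathbf y\|$ has to be extracted from the polynomial prefactor $(1+\|\mathbf x-\mathbf u\|/r)^{-2}$ of Theorem~\ref{theorem:heat} (for the identity coset) and from the polynomial factor of the auxiliary-translate bound (for the other cosets), carefully balanced against the two very different behaviours of $r\mapsto w(B(\mathbf y,r))$ for $r\lesssim\|\mathbf x-\mathbf y\|$ and $r\gtrsim\|\mathbf x-\mathbf y\|$. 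It is this balancing that fixes the admissible exponents in terms of $\mathbf N$, and it is the reason one cannot make do with a mere $L^\infty$-bound in place of the full quantitative content of Theorem~\ref{theorem:heat}; the remaining bookkeeping over the $|G|$ cosets and the small-$r$/large-$r$ ranges is routine but somewhat lengthy.
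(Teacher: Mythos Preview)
Your subordination scheme has a genuine circularity at exactly the point you flag as the obstacle. For the cosets $\sigma\neq e$ you say the decay in $\|\mathbf x-\mathbf y\|$ must come from ``the polynomial factor of the auxiliary-translate bound'', i.e.\ from a bound of the form $|\tau_{\mathbf u}\psi(-\mathbf y)|\le C\,(1+\|\mathbf u-\mathbf y\|)^{-L}\,w(B(\mathbf y,1))^{-1}\chi_{[0,1]}(d(\mathbf u,\mathbf y))$ for $\psi=(-\Delta)^M\varphi$. But R\"osler's product formula only yields $|\tau_{\mathbf u}\psi(-\mathbf y)|\le\|\psi\|_{L^\infty}\chi_{[0,1]}(d(\mathbf u,\mathbf y))$; the extra factor $(1+\|\mathbf u-\mathbf y\|)^{-L}$ is precisely the content of the lemma applied to $\psi$, so invoking it is circular. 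And without it the argument fails: in dimension one with reflection $x\mapsto -x$, take $\mathbf x=R$, $\mathbf y=-R$. The coset $\sigma\neq e$ contributes $\mathbf u\in[R-1,R+1]$, where the heat-kernel prefactor $(1+\|\mathbf x-\mathbf u\|/\sqrt r)^{-2}$ is harmless and $\int_{R-1}^{R+1}h_r(R,\mathbf u)\,dw(\mathbf u)$ is bounded below by a positive constant for $r\le 1$; after the $r$-integral you obtain a bound of order $1$, not the required $R^{-2}w(B(R,1))^{-1}$.

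The paper's route (it states the lemma as a direct corollary of Theorem~\ref{theorem:heat}, citing \cite[Corollary~3.5]{DzH1}) bypasses all of this with a two-line domination argument. Since $\varphi$ is bounded and supported in $B(0,1)$, while $h_1(\mathbf z)=c_k^{-1}(2)^{-\mathbf N/2}e^{-\|\mathbf z\|^2/4}$ is bounded below there, one has the pointwise inequality $|\varphi|\le C\,h_1$ between two radial functions. R\"osler's formula expresses $\tau_{\mathbf x}f(-\mathbf y)$, for radial $f$, as the integral of the radial profile of $f$ against a probability measure; hence it is monotone on nonnegative radial functions and $|\tau_{\mathbf x}\varphi(-\mathbf y)|\le\tau_{\mathbf x}|\varphi|(-\mathbf y)\le C\,\tau_{\mathbf x}h_1(-\mathbf y)=C\,h_1(\mathbf x,\mathbf y)$. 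Now Theorem~\ref{theorem:heat} at $t=1$ gives the size estimate, and the support statement you already extracted from R\"osler's formula supplies the factor $\chi_{[0,1]}(d(\mathbf x,\mathbf y))$. Your scaling reduction to $t=1$ is correct, so this finishes the proof. Note that the only way to repair your subordination is to feed it the bound $|\tau_{\mathbf u}\psi(-\mathbf y)|\le C\,h_1(\mathbf u,\mathbf y)$, obtained exactly as above---at which point the detour through $\int_0^\infty r^{M-1}H_r\psi\,dr$ is superfluous.
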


\subsection{Dunkl-Schr\"odinger operator and semigroup}\label{sec:Schorodinger}
We present the main tools on Dunkl--Schr\"odinger operators, which are discussed in~\cite{AH} (see also~\cite{AH_2}) in details.
Let $V \geq 0$ be a measurable function such that $V \in L^2_{\rm{loc}}(dw)$. We consider the following operator on the Hilbert space $L^2(dw)$:
\begin{equation}\label{eq:Schrodinger_operator}
    \mathcal{L}=-\Delta+V
\end{equation}
with domain
\begin{align*}
    \mathcal{D}(\mathcal{L})=\{f \in L^2(dw)\,:\,\|\xi\|^2\mathcal{F}f(\xi) \in L^2(dw(\xi)) \text{ and }V(\mathbf{x})f(\mathbf{x}) \in L^2(dw(\mathbf{x}))\}.
\end{align*}
We call this operator the \textit{Dunkl-Schr\"odinger operator}. Let us define the quadratic form
\begin{equation}
    \mathbf{Q}(f,g)=\sum_{j=1}^{N}\int_{\mathbb{R}^N}T_jf(\mathbf{x})\overline{T_jg(\mathbf{x})}\,dw(\mathbf{x})+\int_{\mathbb{R}^N}V(\mathbf{x})f(\mathbf{x})\overline{g(\mathbf{x})}\,dw(\mathbf{x})
\end{equation}
with domain
\begin{align*}
    \mathcal{D}(\mathbf{Q})=\left\{f \in L^2(dw)\;:\; \left(\sum_{j=1}^{N}|T_jf|^2\right)^{1/2},V^{1/2}f \in L^2(dw)\right\} .
\end{align*}
The quadratic form is densely defined and closed (see~\cite[Lemma 4.1]{AH}), so there exists a unique positive self-adjoint operator $L$ such that
\begin{align*}
    \langle Lf,f\rangle=\mathbf{Q}(f,f) \text{ for all }f \in \mathcal{D}(L), 
\end{align*}
moreover,
\begin{align*}
    \mathcal{D}(L^{1/2})=\mathcal{D}(\mathbf{Q}) \text{ and }\mathbf{Q}(f,f)=\|L^{1/2}f\|_{L^2(dw)},
\end{align*}
where $L^{1/2}$ is a unique self-adjoint operator such that $(L^{1/2})^2=L$. It was proved in~\cite[Theorem 4.6]{AH}, that $\mathcal{L}$ is essentially self-adjoint on $C^{\infty}_{0}(\mathbb{R}^N)$ and $L$ is its closure. Consequently, $L$ generates the semigroup of self-adjoint contractions on $L^2(dw)$. The semigroup has the form (see~\cite[Theorem 4.8]{AH})
\begin{equation}
    K_tf(\mathbf{x})=\int_{\mathbb{R}^N}k_t(\mathbf{x},\mathbf{y})\,dw(\mathbf{y}),
\end{equation}
where $k_t(\mathbf{x},\mathbf{y})$ is the integral kernel which satisfies
\begin{equation}\label{eq:kernels_compare}
   0 \leq k_t(\mathbf{x},\mathbf{y}) \leq h_t(\mathbf{x},\mathbf{y}). 
\end{equation}

\part{Fefferman--Phong inequality}\label{part:Fefferman-Phong}

\section{Potential satisfying reverse Hölder inequality}
In this part, we assume that $q > \max(1,\frac{\mathbf{N}}{2})$ and $V$ belongs to the reverse H\"older class $\text{RH}^{q}(dw)$, that is, there is a constant $C_{\text{RH}}>0$ such that
\begin{equation}\label{eq:reverse_Holder}
    \left(\frac{1}{w(B)}\int_{B}V(\mathbf{x})^q\,dw(\mathbf{x})\right)^{1/q} \leq C_{\text{RH}}\frac{1}{w(B)}\int_{B}V(\mathbf{x})\,dw(\mathbf{x}) \text{ for every ball }B.
\end{equation}
For any Lebesque measurable set $A$ we define
\begin{equation}\label{eq:mu}
    \mu(A)=\int_{A}V(\mathbf{x})\,dw(\mathbf{x}).
\end{equation}

Our goal is to study the properties of the measure $\mu$ defined above. The proofs of the results in this section are standard and they are based on~\cite[Chapter 7]{Grafakos}.
\begin{lemma}
For all balls $B \subset \mathbb{R}^N$ and measurable sets $E \subseteq B$ we have
\begin{equation}\label{eq:q'_compare}
    \frac{\mu(E)}{\mu(B)} \leq C_{\text{RH}}\left(\frac{w(E)}{w(B)}\right)^{1/q'}, 
\end{equation}
where, here and subsequently, $\frac{1}{q}+\frac{1}{q'}=1$.
\end{lemma}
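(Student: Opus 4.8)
The plan is to derive \eqref{eq:q'_compare} directly from the reverse Hölder inequality \eqref{eq:reverse_Holder} by applying Hölder's inequality with exponents $q$ and $q'$. First I would write, for a measurable $E \subseteq B$,
\[
\mu(E) = \int_E V(\mathbf{x})\,dw(\mathbf{x}) = \int_B V(\mathbf{x})\chi_E(\mathbf{x})\,dw(\mathbf{x}) \leq \left(\int_B V(\mathbf{x})^q\,dw(\mathbf{x})\right)^{1/q}\left(\int_B \chi_E(\mathbf{x})\,dw(\mathbf{x})\right)^{1/q'} = \left(\int_B V^q\,dw\right)^{1/q} w(E)^{1/q'}.
\]

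Next I would rewrite the first factor using \eqref{eq:reverse_Holder}: namely
\[
\left(\int_B V(\mathbf{x})^q\,dw(\mathbf{x})\right)^{1/q} = w(B)^{1/q}\left(\frac{1}{w(B)}\int_B V^q\,dw\right)^{1/q} \leq w(B)^{1/q}\, C_{\text{RH}}\,\frac{1}{w(B)}\int_B V\,dw = C_{\text{RH}}\, w(B)^{1/q - 1}\,\mu(B).
\]
Combining the two displays gives $\mu(E) \leq C_{\text{RH}}\, w(B)^{1/q-1}\,\mu(B)\, w(E)^{1/q'}$. Since $1/q - 1 = -1/q'$, this is exactly $\mu(E)/\mu(B) \leq C_{\text{RH}}(w(E)/w(B))^{1/q'}$, which is the claimed inequality. (One should note that if $\mu(B) = 0$ then $V = 0$ $w$-a.e.\ on $B$ and both sides vanish, so the inequality holds trivially; otherwise the division is legitimate since $\mu(B) < \infty$ by $V \in L^2_{\mathrm{loc}}(dw)$ and Hölder on the bounded set $B$.)

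There is no real obstacle here — the statement is a standard consequence of reverse Hölder, as the paper itself indicates by citing \cite[Chapter 7]{Grafakos}. The only point requiring a word of care is the finiteness of $\mu(B)$ and the degenerate case $\mu(B)=0$, both of which are disposed of as above; everything else is a two-line application of Hölder's inequality followed by substitution of \eqref{eq:reverse_Holder}.
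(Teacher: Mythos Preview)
Your proof is correct and follows essentially the same approach as the paper: apply H\"older's inequality with exponents $q,q'$ to bound $\mu(E)$ by $w(E)^{1/q'}(\int_B V^q\,dw)^{1/q}$, then invoke the reverse H\"older inequality~\eqref{eq:reverse_Holder} to replace the $L^q$-average by the $L^1$-average. The paper's proof is the same computation compressed into a single line, omitting the remark on the degenerate case $\mu(B)=0$.
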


\begin{proof}
Applying H\"older's inequality, then the reverse H\"older inequality~\eqref{eq:reverse_Holder}, we get
\begin{align*}
    \mu(E)=\int_{\mathbb{R}^N}\chi_{E}(\mathbf{x})V(\mathbf{x})\,dw(\mathbf{x}) \leq w(E)^{1/q'}\left(\int_{B}V(\mathbf{x})^q\,dw(\mathbf{x})\right)^{1/q} \leq C_{\text{RH}}\left(\frac{w(E)}{w(B)}\right)^{1/q'}\mu(B).
\end{align*}
\end{proof}

\begin{lemma}\label{lem:measure_cont}
Let $\varepsilon>0$. There is a constant $0<\gamma<1$ such that for all $\mathbf{x} \in \mathbb{R}^N$ and $r>0$ we have
\begin{align*}
    1-\frac{w(B(\mathbf{x},\gamma r))}{w(B(\mathbf{x},r))}=\frac{w(B(\mathbf{x}, r) \setminus B(\mathbf{x},\gamma r))}{w(B(\mathbf{x},r))} < \varepsilon.
\end{align*}
\end{lemma}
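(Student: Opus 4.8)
The statement is a purely geometric fact about the doubling measure $w$: for a prescribed $\varepsilon > 0$ we must find $\gamma \in (0,1)$, \emph{uniform in $\mathbf x$ and $r$}, so that the annulus $B(\mathbf x, r) \setminus B(\mathbf x, \gamma r)$ carries less than an $\varepsilon$-fraction of the mass of $B(\mathbf x, r)$. The first reduction I would make is to pass from general $\gamma$ to dyadic scales $\gamma = 2^{-j}$: by monotonicity of $r \mapsto w(B(\mathbf x, r))$ it suffices to show that $w(B(\mathbf x, 2^{-j} r)) / w(B(\mathbf x, r)) \to 1$ as $j \to \infty$, uniformly in $\mathbf x, r$. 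Writing the ratio as a telescoping product, $\frac{w(B(\mathbf x, r))}{w(B(\mathbf x, 2^{-j} r))} = \prod_{i=0}^{j-1} \frac{w(B(\mathbf x, 2^{-i} r))}{w(B(\mathbf x, 2^{-i-1} r))}$, we see the real content is a \emph{lower} reverse-doubling bound: there should exist a constant $\theta > 1$, independent of $\mathbf x$ and $r$, with $w(B(\mathbf x, 2r)) \geq \theta\, w(B(\mathbf x, r))$ for all $\mathbf x, r$.

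That lower bound is exactly the left-hand inequality in~\eqref{eq:growth}: taking $r_2 = 2r$, $r_1 = r$ there gives $w(B(\mathbf x, 2r)) / w(B(\mathbf x, r)) \geq C^{-1} 2^{N}$. Since $N \geq 1$, the constant $\theta := C^{-1} 2^{N}$ satisfies $\theta > 1$ provided $2^N > C$; if $C$ is so large that this fails at the single-step scale, I would instead iterate~\eqref{eq:growth} over $\ell$ steps at once, $w(B(\mathbf x, 2^{\ell} r)) \geq C^{-1} 2^{\ell N} w(B(\mathbf x, r))$, and fix $\ell$ large enough that $C^{-1} 2^{\ell N} > 1$; then work with $\gamma$ a power of $2^{-\ell}$. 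Either way one obtains, for $\gamma = 2^{-j\ell}$, the bound
\begin{equation*}
    \frac{w(B(\mathbf x, \gamma r))}{w(B(\mathbf x, r))} \geq \left( C\, 2^{-\ell N} \right)^{j},
\end{equation*}
and since $C\, 2^{-\ell N} < 1$ this tends to $0$... wait — I must be careful with the direction: the correct statement is that $w(B(\mathbf x, r)) \geq (C^{-1} 2^{\ell N})^{j}\, w(B(\mathbf x, \gamma r))$, equivalently $w(B(\mathbf x, \gamma r))/w(B(\mathbf x, r)) \geq (C^{-1} 2^{\ell N})^{-j} \to \ldots$ — no: with $C^{-1} 2^{\ell N} > 1$ its reciprocal is $<1$, so this lower bound degrades. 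The inequality I actually need runs the other way and comes from the \emph{right}-hand side of~\eqref{eq:growth} applied to $r_2 = r$, $r_1 = \gamma r$: $w(B(\mathbf x, r)) \leq C \gamma^{-\mathbf N} w(B(\mathbf x, \gamma r))$, i.e. $w(B(\mathbf x, \gamma r))/w(B(\mathbf x, r)) \geq C^{-1} \gamma^{\mathbf N}$, which is bounded \emph{below} but away from $1$. So the naive two-sided comparison is not enough; one genuinely needs the reverse-doubling (lower bound with constant $>1$ per step) to get the ratio arbitrarily close to $1$.

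Concretely, then, the clean route is: iterate the left inequality of~\eqref{eq:growth} to get $\theta > 1$ with $w(B(\mathbf x, Kr)) \geq \theta\, w(B(\mathbf x, r))$ for a fixed integer $K \geq 2$ and all $\mathbf x, r$; deduce $w(B(\mathbf x, K^{-j} r)) \leq \theta^{-j} w(B(\mathbf x, r))$; hence
\begin{equation*}
    \frac{w(B(\mathbf x, r) \setminus B(\mathbf x, K^{-j} r))}{w(B(\mathbf x, r))} = 1 - \frac{w(B(\mathbf x, K^{-j} r))}{w(B(\mathbf x, r))} \leq 1 - \text{(nothing)} \ \ \text{— rather} \ \ \leq 1,
\end{equation*}
which is the wrong direction again. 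The honest fix: reverse-doubling says the \emph{inner} ball is a definite fraction \emph{smaller}, i.e. $w(B(\mathbf x, K^{-1}r)) \leq \theta^{-1} w(B(\mathbf x, r))$ — that makes the ratio $w(B(\mathbf x,\gamma r))/w(B(\mathbf x, r))$ \emph{small}, not close to $1$. I see the subtlety now: the lemma wants the inner ball to capture \emph{almost all} the mass, so what is needed is the \emph{opposite} — that shrinking the radius slightly loses almost nothing, which is continuity from the \emph{right} direction and is \emph{not} a consequence of doubling alone. The correct tool is that $w$ is absolutely continuous with respect to Lebesgue measure with density $\prod_\alpha |\langle \mathbf x, \alpha\rangle|^{k(\alpha)} \in L^1_{\mathrm{loc}}$; by~\eqref{eq:balls_asymp}, $w(B(\mathbf x, r)) \asymp r^N \prod_\alpha (|\langle \mathbf x, \alpha\rangle| + r)^{k(\alpha)}$, so the ratio $w(B(\mathbf x, \gamma r))/w(B(\mathbf x, r))$ is pinched between $c\,\gamma^{N} \prod_\alpha \big(\tfrac{|\langle \mathbf x,\alpha\rangle| + \gamma r}{|\langle \mathbf x,\alpha\rangle| + r}\big)^{k(\alpha)}$ above and below; as $\gamma \to 1^-$ each factor tends to $1$ uniformly in $\mathbf x, r$ (the function $s \mapsto \frac{a+\gamma s}{a+s}$ is $\geq \gamma$ for all $a, s \geq 0$), giving the uniform estimate. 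The main obstacle is precisely this sign-of-monotonicity issue: one must use~\eqref{eq:balls_asymp} (the explicit comparison), not merely the abstract doubling~\eqref{eq:doubling} or the two-sided growth~\eqref{eq:growth}, to see that the ratio is uniformly continuous in $\gamma$ at $\gamma = 1$; choosing $\gamma$ close enough to $1$ to make the resulting bound exceed $1 - \varepsilon$ then finishes the proof.
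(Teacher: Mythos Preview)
Your long exploratory discussion eventually lands on the right ingredient --- the explicit comparison~\eqref{eq:balls_asymp} --- but the final argument has a genuine gap. You propose to bound the ratio $w(B(\mathbf x,\gamma r))/w(B(\mathbf x,r))$ from below by applying~\eqref{eq:balls_asymp} separately to numerator and denominator, obtaining
\[
\frac{w(B(\mathbf x,\gamma r))}{w(B(\mathbf x,r))} \ \geq\ C^{-2}\,\gamma^{N}\prod_{\alpha\in R}\left(\frac{|\langle \mathbf x,\alpha\rangle|+\gamma r}{|\langle \mathbf x,\alpha\rangle|+r}\right)^{k(\alpha)}\ \geq\ C^{-2}\gamma^{\mathbf N}.
\]
As $\gamma\to 1^-$ the product and $\gamma^{\mathbf N}$ do tend to $1$ uniformly, but the prefactor $C^{-2}$ coming from the two uses of the asymptotic $\asymp$ does \emph{not} disappear. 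Your lower bound therefore converges only to $C^{-2}$, not to $1$, and you cannot force $1-w(B(\mathbf x,\gamma r))/w(B(\mathbf x,r))<\varepsilon$ for small $\varepsilon$. The issue is structural: a two-sided comparison with fixed constants can never detect that a ratio is \emph{close to $1$}.

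The paper avoids this by estimating the \emph{annulus} directly rather than the ratio of two balls. On $B(\mathbf x,r)\setminus B(\mathbf x,\gamma r)$ the density satisfies $\prod_\alpha |\langle \mathbf y,\alpha\rangle|^{k(\alpha)}\le \prod_\alpha(\sqrt2\,r+|\langle \mathbf x,\alpha\rangle|)^{k(\alpha)}$, a bound that is constant in $\mathbf y$; integrating over the annulus then extracts the Lebesgue factor $v_N r^{N}(1-\gamma^{N})$. Dividing by $w(B(\mathbf x,r))\asymp r^{N}\prod_\alpha(|\langle \mathbf x,\alpha\rangle|+r)^{k(\alpha)}$ via~\eqref{eq:balls_asymp} (now used only \emph{once}, in the denominator) gives
\[
\frac{w(B(\mathbf x,r)\setminus B(\mathbf x,\gamma r))}{w(B(\mathbf x,r))}\ \le\ C(1-\gamma^{N}),
\]
and the small factor $(1-\gamma^{N})$ survives. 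The missing idea in your attempt is precisely this: isolate the thinness of the annulus through its Lebesgue measure before invoking the rough comparison~\eqref{eq:balls_asymp}, rather than letting the implicit constants of~\eqref{eq:balls_asymp} swallow the gain.
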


\begin{proof}
Thanks to~\eqref{eq:measure_formula} we obtain
\begin{align*}
    w(B(\mathbf{x}, r) \setminus B(\mathbf{x},\gamma r))&=\int_{B(\mathbf{x},r) \setminus B(\mathbf{x},\gamma r)} \prod_{\alpha \in R}|\langle \mathbf{y},\alpha \rangle|^{k(\alpha)}\,d\mathbf{y} \\&\leq \int_{B(\mathbf{x},r) \setminus B(\mathbf{x},\gamma r)} \prod_{\alpha \in R}(|\langle \mathbf{y}-\mathbf{x},\alpha \rangle|+|\langle \mathbf{x},\alpha \rangle|)^{k(\alpha)}\,d\mathbf{y}.
\end{align*}
For all $\mathbf{y} \in B(\mathbf{x}, r) $ we have $|\langle \mathbf{y}-\mathbf{x},\alpha\rangle| \leq \sqrt{2}r$, so
\begin{align*}
    w(B(\mathbf{x}, r) \setminus B(\mathbf{x},\gamma r)) &\leq 2^{\mathbf{N}/2} \int_{B(\mathbf{x},r) \setminus B(\mathbf{x},\gamma r)} \prod_{\alpha \in R}(|\langle \mathbf{x},\alpha \rangle|+r)^{k(\alpha)}\,d\mathbf{y}\\&=v_{N}2^{\mathbf{N}/2}(r^{N}-\gamma^{N}r^{N})\prod_{\alpha \in R}(|\langle \mathbf{x},\alpha \rangle|+r)^{k(\alpha)},
\end{align*}
where $v_N$ is the Euclidean measure of the unit $N$-dimensional ball. Consequently, thanks to~\eqref{eq:balls_asymp}, we have
\begin{align*}
    \frac{w(B(\mathbf{x}, r) \setminus B(\mathbf{x},\gamma r))}{w(B(\mathbf{x},r))} \leq C(1-\gamma^{N}),
\end{align*}
where the constant $C>0$ is independent of $\mathbf{x}$ and $r$. The claim follows easily.
\end{proof}

\begin{lemma}\label{lem:mu_doubling}
The measure $\mu$ defined in~\eqref{eq:mu} is doubling, i.e. there is a constant $C_{\mu}>0$ such that for all $\mathbf{x} \in \mathbb{R}$ and $r>0$ we have
\begin{equation*}
    \mu(B(\mathbf{x},2r)) \leq C_{\mu}\mu(B(\mathbf{x},r)).
\end{equation*}
\end{lemma}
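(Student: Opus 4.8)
The plan is to prove that $\mu$ is doubling by exploiting two facts already available: the reverse Hölder estimate~\eqref{eq:q'_compare}, which controls $\mu$ from above in terms of $w$, and a matching lower bound that prevents $\mu(B(\mathbf{x},r))$ from collapsing relative to $\mu(B(\mathbf{x},2r))$. The overall strategy mirrors the classical argument (as in~\cite[Chapter 7]{Grafakos}): we will show that the $w$-measure of an annulus is a genuinely small fraction of the whole ball (which is exactly Lemma~\ref{lem:measure_cont}), feed this into~\eqref{eq:q'_compare} to conclude that the $\mu$-mass of that annulus is a small fraction of $\mu$ of the larger ball, and then deduce that most of the $\mu$-mass of $B(\mathbf{x},2r)$ already sits inside $B(\mathbf{x},r)$.

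In detail, first I would fix $\varepsilon>0$ small (to be specified) and apply Lemma~\ref{lem:measure_cont} with this $\varepsilon$ to obtain $\gamma\in(0,1)$ such that $w(B(\mathbf{x},r)\setminus B(\mathbf{x},\gamma r))<\varepsilon\, w(B(\mathbf{x},r))$ for all $\mathbf{x}$ and $r$; applied to the ball $B(\mathbf{x},2r)$ this gives $w(B(\mathbf{x},2r)\setminus B(\mathbf{x},2\gamma r))<\varepsilon\, w(B(\mathbf{x},2r))$. Next, taking $E=B(\mathbf{x},2r)\setminus B(\mathbf{x},2\gamma r)$ and $B=B(\mathbf{x},2r)$ in~\eqref{eq:q'_compare}, we get $\mu(E)\le C_{\text{RH}}\,\varepsilon^{1/q'}\mu(B(\mathbf{x},2r))$. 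Choosing $\varepsilon$ so that $C_{\text{RH}}\,\varepsilon^{1/q'}\le \tfrac12$, this yields
\begin{equation*}
\mu(B(\mathbf{x},2\gamma r)) = \mu(B(\mathbf{x},2r)) - \mu(E) \ge \tfrac12\,\mu(B(\mathbf{x},2r)).
\end{equation*}
Since $2\gamma r < 2r$ we have $\mu(B(\mathbf{x},2\gamma r))\le \mu(B(\mathbf{x},2r))$, so this says $\mu$ of a ball of radius $2\gamma r$ is comparable to $\mu$ of the concentric ball of radius $2r$, i.e. $\mu$ does not shrink too fast when the radius shrinks by the fixed factor $\gamma$.

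Finally I would convert this into the stated doubling inequality by iteration. Given $\mathbf{x}$ and $r>0$, pick the integer $n$ with $2\gamma^{n}\le 1< 2\gamma^{n-1}$ (so $n$ depends only on $\gamma$), and apply the previous display successively with radii $2r, 2\gamma r, 2\gamma^2 r,\dots$ to obtain $\mu(B(\mathbf{x},2\gamma^{n}r))\ge 2^{-n}\mu(B(\mathbf{x},2r))$. Since $2\gamma^{n}r\le r$, monotonicity gives $\mu(B(\mathbf{x},r))\ge \mu(B(\mathbf{x},2\gamma^{n}r))\ge 2^{-n}\mu(B(\mathbf{x},2r))$, which is exactly the claim with $C_\mu=2^{n}$. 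I do not anticipate a serious obstacle here; the one point requiring mild care is making sure the iteration constant $n$ is uniform in $\mathbf{x}$ and $r$, which it is because $\gamma$ and hence $n$ come out of Lemma~\ref{lem:measure_cont} independently of $\mathbf{x},r$. (An alternative, perhaps even shorter, route is to apply~\eqref{eq:q'_compare} directly with $E=B(\mathbf{x},2r)$, $B=B(\mathbf{x},2r)$ being useless, so instead compare $B(\mathbf{x},2r)$ and $B(\mathbf{x},r)$ via a chain of such inequalities together with the doubling of $w$ from~\eqref{eq:doubling}; but the annulus argument above is cleaner because it avoids needing a reverse inequality to~\eqref{eq:q'_compare}.)
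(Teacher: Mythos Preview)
Your proposal is correct and follows essentially the same approach as the paper's proof: both apply \eqref{eq:q'_compare} to the annulus $B(\mathbf{x},\rho)\setminus B(\mathbf{x},\gamma\rho)$, invoke Lemma~\ref{lem:measure_cont} to make the right-hand side at most $\tfrac12$, deduce $\mu(B(\mathbf{x},\rho))\le 2\,\mu(B(\mathbf{x},\gamma\rho))$, and then iterate enough times so that $\gamma^{n}<\tfrac12$. The only cosmetic difference is that you start with $\rho=2r$ whereas the paper writes the inequality for a generic radius and substitutes at the end.
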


\begin{proof}
Let $0<\gamma<1$. Setting $B=B(\mathbf{x},r)$ and $E=B(\mathbf{x}, r) \setminus B(\mathbf{x},\gamma r)$ in~\eqref{eq:q'_compare}, we get
\begin{equation}\label{eq:complementary}
    1-\frac{\mu(B(\mathbf{x},\gamma r))}{\mu(B(\mathbf{x}, r))} \leq C_{\text{RH}}\left(1-\frac{w(B(\mathbf{x},\gamma r))}{w(B(\mathbf{x},r))}\right)^{1/q'}.
\end{equation}
Thanks to Lemma~\ref{lem:measure_cont} for $1-\gamma$ small enough we have
\begin{align*}
    C_{\text{RH}}\left(1-\frac{w(B(\mathbf{x},\gamma r))}{w(B(\mathbf{x},r))}\right)^{1/q'} <1/2,
\end{align*}
consequently,
\begin{equation}\label{eq:gamma_doubling}
    \mu(B(\mathbf{x},r)) \leq 2\mu(B(\mathbf{x},\gamma r)).
\end{equation}
There is $n \in \mathbb{N}$ such that $\gamma^{n}<1/2$. Applying~\eqref{eq:gamma_doubling} $n$ times we get the claim.
\end{proof}

As the consequence of the doubling property of $\mu$, we obtain the following corollary.
\begin{corollary}
There is a constant $C_{\text{RH}}'>0$ such that for all cubes $Q \subset \mathbb{R}^N$ and measurable sets $E \subseteq Q$ we have
\begin{equation}\label{eq:reverse_Holder_cubes}
    \left(\frac{1}{w(Q)}\int_{Q}V(\mathbf{x})^q\,dw(\mathbf{x})\right)^{1/q} \leq C'_{\text{RH}} \frac{1}{w(Q)}\int_{Q}V(\mathbf{x})\,dw(\mathbf{x}),
\end{equation}
\begin{equation}\label{eq:compare_cubes}
    \frac{\mu(E)}{\mu(Q)} \leq C_{\text{RH}}'\left(\frac{w(E)}{w(Q)}\right)^{1/q'}. 
\end{equation}
\end{corollary}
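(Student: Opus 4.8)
The plan is to derive both displays as essentially immediate consequences of results already established, namely the reverse Hölder inequality \eqref{eq:reverse_Holder} for balls, Lemma \ref{lem:mu_doubling} (the doubling property of $\mu$), and \eqref{eq:q'_compare}. The only genuine new content is the passage from balls to cubes, and this is entirely driven by the geometric fact that a cube $Q$ of side length $\ell$ is sandwiched between two concentric balls whose radii differ by a factor depending only on $N$: if $B_{\rm in}$ and $B_{\rm out}$ denote the inscribed and circumscribed balls of $Q$, then $B_{\rm in} \subseteq Q \subseteq B_{\rm out}$ with radii in fixed ratio $\sqrt{N}$. Since both $w$ (by \eqref{eq:doubling}) and $\mu$ (by Lemma \ref{lem:mu_doubling}) are doubling, we have $w(B_{\rm out}) \leq C w(Q) \leq C w(B_{\rm in}) \leq C w(B_{\rm out})$ and likewise $\mu(B_{\rm out}) \approx \mu(Q) \approx \mu(B_{\rm in})$, with constants depending only on $N$ and the doubling constants.

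For \eqref{eq:reverse_Holder_cubes}, first I would bound the left-hand side by passing from $Q$ to the circumscribed ball $B_{\rm out}$: since $Q \subseteq B_{\rm out}$ and $w(Q) \geq c\, w(B_{\rm out})$, we get
\begin{align*}
    \left(\frac{1}{w(Q)}\int_{Q}V^q\,dw\right)^{1/q} \leq \left(\frac{C}{w(B_{\rm out})}\int_{B_{\rm out}}V^q\,dw\right)^{1/q} \leq C^{1/q}C_{\text{RH}}\frac{1}{w(B_{\rm out})}\int_{B_{\rm out}}V\,dw,
\end{align*}
using \eqref{eq:reverse_Holder} for the ball $B_{\rm out}$. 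Then I would bound $\frac{1}{w(B_{\rm out})}\int_{B_{\rm out}}V\,dw = \frac{\mu(B_{\rm out})}{w(B_{\rm out})}$ from above by $C\frac{\mu(Q)}{w(Q)}$: here $w(B_{\rm out}) \geq w(Q')$ for a slightly shrunken... more simply, $\mu(B_{\rm out}) \leq C_\mu^k \mu(B_{\rm in}) \leq C_\mu^k \mu(Q)$ by iterating doubling of $\mu$ enough times to cover the ratio $\sqrt{N}$, and $w(B_{\rm out}) \geq w(Q)$ trivially since $Q \subseteq B_{\rm out}$ — wait, that goes the wrong way, so instead use $w(B_{\rm out}) \geq c^{-1} w(B_{\rm in})$... actually the cleanest route: $w(B_{\rm out}) \geq w(Q)$ is false; rather $w(B_{\rm out}) \leq C w(Q)$ and $w(B_{\rm out}) \geq w(B_{\rm in})$, and $w(Q) \geq w(B_{\rm in})$ gives nothing by itself, but combining $\mu(B_{\rm out}) \leq C\mu(Q)$ with $w(B_{\rm out}) \geq w(B_{\rm in}) \geq c\, w(B_{\rm out}) \geq c\, w(Q)$ yields $\frac{\mu(B_{\rm out})}{w(B_{\rm out})} \leq \frac{C\mu(Q)}{c\,w(Q)}$, which is what we need. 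Absorbing all dimensional constants into $C'_{\text{RH}}$ completes the estimate.

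For \eqref{eq:compare_cubes}, given $E \subseteq Q$, I would apply \eqref{eq:q'_compare} with the ball $B = B_{\rm out}$ (so $E \subseteq Q \subseteq B_{\rm out}$):
\begin{align*}
    \frac{\mu(E)}{\mu(B_{\rm out})} \leq C_{\text{RH}}\left(\frac{w(E)}{w(B_{\rm out})}\right)^{1/q'}.
\end{align*}
Then multiply through, using $\mu(B_{\rm out}) \leq C\mu(Q)$ on the left — no wait, that again goes the wrong way; rather $\mu(E) \leq C_{\text{RH}}\left(\frac{w(E)}{w(B_{\rm out})}\right)^{1/q'}\mu(B_{\rm out}) \leq C_{\text{RH}}\left(\frac{w(E)}{c\,w(Q)}\right)^{1/q'}C\mu(Q)$, using $w(B_{\rm out}) \geq c\,w(Q)$ and $\mu(B_{\rm out}) \leq C\mu(Q)$, so $\frac{\mu(E)}{\mu(Q)} \leq C'_{\text{RH}}\left(\frac{w(E)}{w(Q)}\right)^{1/q'}$ after collecting constants. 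The main (and only) subtlety is bookkeeping the doubling iterations correctly so that every comparison between a cube and its in/circumscribed balls runs in the direction actually needed; there is no analytic obstacle, since the heavy lifting — extracting $A_\infty$-type information from the reverse Hölder condition — was already done in \eqref{eq:q'_compare} and Lemma \ref{lem:mu_doubling}.
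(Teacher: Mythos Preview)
Your approach is correct and matches the paper's implicit argument (the paper states the corollary without proof, saying only that it follows from the doubling property of $\mu$). Note that your self-corrections are unnecessary: since $Q \subseteq B_{\rm out}$, the trivial inequality $w(B_{\rm out}) \geq w(Q)$ already runs in the direction you need in both places --- combined with $\mu(B_{\rm out}) \leq C\mu(Q)$ from the doubling of $\mu$, it immediately gives $\frac{\mu(B_{\rm out})}{w(B_{\rm out})} \leq C\frac{\mu(Q)}{w(Q)}$ for \eqref{eq:reverse_Holder_cubes} and similarly streamlines \eqref{eq:compare_cubes}, so the roundabout chain through $B_{\rm in}$ can be dropped.
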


\begin{lemma}
There are $0<\gamma,\delta<1$ such that for all cubes $Q \subset \mathbb{R}^N$ and measurable sets $E \subseteq Q$ the following implication is true:
\begin{equation}\label{eq:implication}
    \mu(E) < \gamma \mu(Q) \Rightarrow w(E) < \delta w(Q).
\end{equation}
\end{lemma}

\begin{proof}
Set $\gamma'>0$ small enough in order to have $\delta'=C'_{\text{RH}}(\gamma')^{1/q'}<1$, where $C'_{\text{RH}}$ is the constant in~\eqref{eq:compare_cubes}. Then by~\eqref{eq:compare_cubes} we have the implication
\begin{equation}\label{eq:implication_1}
    w(E) \leq  \gamma' w(Q) \Rightarrow \mu(E) \leq \delta' \mu(Q).
\end{equation}
Taking $Q \setminus E$ instead of $E$ in~\eqref{eq:implication_1} we get
\begin{equation}\label{eq:implication_2}
    w(E) \geq  (1-\gamma') w(Q) \Rightarrow \mu(E) \geq  (1-\delta') \mu(Q).
\end{equation}
Note that~\eqref{eq:implication_2} is equivalent to~\eqref{eq:implication} with $\gamma=1-\delta'$ and $\delta=1-\gamma'$.
\end{proof}

We will need the following classical result from theory of $A_p$ weights (see~\cite[Corollary 7.2.4]{Grafakos}).

\begin{proposition}\label{propo:Grafakos}
Let $v$ be the weight and let $\nu$ be a doubling measure on $\mathbb{R}^N$. Suppose that there are $0<\gamma,\delta<1$ such that
\begin{equation*}
    \nu(E)<\gamma \nu(Q) \Rightarrow \int_{E}v(\mathbf{x})\,d\nu(\mathbf{x}) < \delta \int_{Q}v(\mathbf{x})\,d\nu(\mathbf{x}),
\end{equation*}
whenever $E$ is a $\nu$-measurable subset of a cube $Q$. Then there are constants $C,\eta>0$ such that for every cube $Q$ in $\mathbb{R}^N$ we have
\begin{equation}\label{eq:reverse_holder_different_weight}
    \left(\frac{1}{\nu(Q)}\int_{Q}v^{1+\eta}(\mathbf{x})\,d\nu(\mathbf{x})\right)^{1/(1+\eta)} \leq C \frac{1}{\nu(Q)}\int_{Q}v(\mathbf{x})\,d\nu(\mathbf{x}).
\end{equation}
\end{proposition}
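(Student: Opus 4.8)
The plan is to carry out the classical Calder\'on--Zygmund stopping-time argument that self-improves an $A_\infty$-type condition to a reverse H\"older inequality, here performed against the doubling measure $\nu$ rather than Lebesgue measure. Fix a cube $Q$ and put $a_Q=\frac{1}{\nu(Q)}\int_Q v\,d\nu$; we may assume $\int_Qv\,d\nu<\infty$, the general case following by exhaustion. First I would fix a height ratio $\beta>1$ (to be calibrated; it will depend on $\gamma$ and on a doubling constant $C_\nu$ of $\nu$) and, for each $j\ge1$, perform the dyadic Calder\'on--Zygmund decomposition of $v$ with respect to $\nu$ on $Q$ at level $\lambda_j=\beta^{j}a_Q$: let $\{Q_{j,i}\}_i$ be the maximal dyadic subcubes of $Q$ with $\frac{1}{\nu(Q_{j,i})}\int_{Q_{j,i}}v\,d\nu>\lambda_j$, and set $\Omega_j=\bigcup_iQ_{j,i}$. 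Since $\nu$ is doubling, a dyadic cube has $\nu$-measure comparable (with constant $C_\nu$) to that of its dyadic parent, so maximality yields the two-sided bound $\lambda_j<\frac{1}{\nu(Q_{j,i})}\int_{Q_{j,i}}v\,d\nu\le C_\nu\lambda_j$; the differentiation theorem for the doubling measure $\nu$ (equivalently, dyadic martingale convergence) gives $v\le\lambda_j$ $\nu$-a.e.\ on $Q\setminus\Omega_j$; and $\lambda_{j+1}>\lambda_j$ forces $\Omega_{j+1}\subseteq\Omega_j$, with each $Q_{j+1,i'}$ contained in exactly one $Q_{j,i}$.

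The crux is to convert the hypothesis into geometric decay of $\int_{\Omega_j}v\,d\nu$. Fix a cube $Q_{j,i}$ and let $E=\Omega_{j+1}\cap Q_{j,i}$, a disjoint union of those $Q_{j+1,i'}$ it contains; then $\nu(E)\le\lambda_{j+1}^{-1}\int_{Q_{j,i}}v\,d\nu\le(C_\nu\lambda_j/\lambda_{j+1})\,\nu(Q_{j,i})=(C_\nu/\beta)\,\nu(Q_{j,i})$. Choosing $\beta\ge 2C_\nu/\gamma$ (which is $>1$ since $C_\nu\ge1>\gamma$) makes $\nu(E)<\gamma\,\nu(Q_{j,i})$, so the hypothesis, applied to $E\subseteq Q_{j,i}$, gives $\int_{E}v\,d\nu\le\delta\int_{Q_{j,i}}v\,d\nu$; summing over $i$ yields $\int_{\Omega_{j+1}}v\,d\nu\le\delta\int_{\Omega_j}v\,d\nu$. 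The case $j=0$ of this follows directly from the hypothesis applied to $\Omega_1\subseteq Q$, since $\nu(\Omega_1)\le\beta^{-1}\nu(Q)<\gamma\nu(Q)$. Iterating gives $\int_{\Omega_j}v\,d\nu\le\delta^{j}\int_Q v\,d\nu$ for all $j\ge0$, with $\Omega_0:=Q$; note also $\nu(\Omega_j)\to0$, so $\nu(\bigcap_j\Omega_j)=0$.

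Finally I would integrate over the layers. Writing $Q$ as the $\nu$-essentially disjoint union of the sets $\Omega_j\setminus\Omega_{j+1}$ ($j\ge0$), on which $v\le\lambda_{j+1}=\beta^{j+1}a_Q$ $\nu$-a.e., one gets
\begin{equation*}
\int_Q v^{1+\eta}\,d\nu\le\sum_{j=0}^{\infty}\lambda_{j+1}^{\eta}\int_{\Omega_j}v\,d\nu\le\beta^{\eta}a_Q^{\eta}\Big(\int_Q v\,d\nu\Big)\sum_{j=0}^{\infty}(\beta^{\eta}\delta)^{j}.
\end{equation*}
Now choose $\eta>0$ small enough that $\beta^{\eta}\delta<1$, i.e.\ $\eta<\log(1/\delta)/\log\beta$; the series converges, and dividing by $\nu(Q)$ and taking the $(1+\eta)$-th root gives \eqref{eq:reverse_holder_different_weight} with $C=\big((1-\beta^{\eta}\delta)^{-1}\beta^{\eta}\big)^{1/(1+\eta)}$. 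The only genuinely delicate point is the first paragraph --- setting up the stopping-time decomposition cleanly against the non-Lebesgue doubling measure $\nu$ (comparability of a dyadic cube with its parent, and $\nu$-a.e.\ differentiation) --- and the twofold calibration of parameters: $\beta$ must beat $C_\nu/\gamma$ while $\eta$ must stay below $\log(1/\delta)/\log\beta$. Everything else is the routine geometric-series bookkeeping displayed above.
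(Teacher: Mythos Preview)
Your proof is correct and follows the classical Calder\'on--Zygmund stopping-time argument. The paper does not prove this proposition at all; it simply cites \cite[Corollary 7.2.4]{Grafakos} as a known result, and your argument is precisely the one found there (adapted to a general doubling measure $\nu$).
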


\begin{proposition}
There is a constant $C>0$ and $p>1$ such that for every cube $Q$ in $\mathbb{R}^N$ we have
\begin{equation}\label{eq:A_p}
    \left(\frac{1}{w(Q)}\int_{Q} V(\mathbf{x})\,dw(\mathbf{x})\right)\left(\frac{1}{w(Q)}\int_{Q}V^{-\frac{1}{p-1}}(\mathbf{x})\,dw(\mathbf{x})\right)^{p-1} \leq C.
\end{equation}
\end{proposition}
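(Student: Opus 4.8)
The plan is to read~\eqref{eq:A_p} as the statement that $V$ satisfies an $A_p$ condition (with respect to $dw$) for a suitable $p>1$, and to deduce it from the reverse-H\"older self-improvement result Proposition~\ref{propo:Grafakos}, applied not to $w$ but to the measure $\mu$ of~\eqref{eq:mu}. Concretely, I will take $\nu=\mu$ (which is doubling by Lemma~\ref{lem:mu_doubling}) and the weight $v=V^{-1}$. This is legitimate: $\mathrm{RH}^q(dw)$ forces $V>0$ $w$-a.e.\ whenever $V\not\equiv0$ (otherwise, combining~\eqref{eq:implication} with the Lebesgue differentiation theorem and the doubling of $w$, one would conclude $V\equiv0$; the case $V\equiv0$ is trivial), so $v$ is finite $w$-a.e., and moreover $\int_Q v\,d\mu=\int_Q V^{-1}V\,dw=w(Q)<\infty$, i.e.\ $v\in L^1_{\mathrm{loc}}(d\mu)$. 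With this choice, the hypothesis of Proposition~\ref{propo:Grafakos} reads: $\mu(E)<\gamma\mu(Q)\Rightarrow\int_E V^{-1}\,d\mu<\delta\int_Q V^{-1}\,d\mu$, and since $\int_E V^{-1}\,d\mu=w(E)$ this is exactly the implication~\eqref{eq:implication} already established.

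Proposition~\ref{propo:Grafakos} then furnishes constants $C,\eta>0$ such that, for every cube $Q$,
$$\Big(\frac{1}{\mu(Q)}\int_Q V^{-(1+\eta)}\,d\mu\Big)^{1/(1+\eta)}\le\frac{C}{\mu(Q)}\int_Q V^{-1}\,d\mu.$$
Using $d\mu=V\,dw$ to rewrite $\int_Q V^{-(1+\eta)}\,d\mu=\int_Q V^{-\eta}\,dw$ and $\int_Q V^{-1}\,d\mu=w(Q)$, this becomes $\big(\mu(Q)^{-1}\int_Q V^{-\eta}\,dw\big)^{1/(1+\eta)}\le C\,w(Q)/\mu(Q)$. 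Raising to the power $1+\eta$, multiplying by $\mu(Q)$, dividing by $w(Q)$, and using $\mu(Q)=\int_Q V\,dw$ gives
$$\frac{1}{w(Q)}\int_Q V^{-\eta}\,dw\le C^{1+\eta}\Big(\frac{w(Q)}{\mu(Q)}\Big)^{\eta}=C^{1+\eta}\Big(\frac{1}{w(Q)}\int_Q V\,dw\Big)^{-\eta}.$$
Multiplying by $\big(\tfrac{1}{w(Q)}\int_Q V\,dw\big)^{\eta}$ to reach product form and then raising to the power $1/\eta$, and finally setting $p=1+\tfrac1\eta>1$ (so that $p-1=\tfrac1\eta$, $V^{-\eta}=V^{-1/(p-1)}$, and $(1+\eta)/\eta=p$), yields precisely~\eqref{eq:A_p} with constant $C^p$.

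There is essentially no obstacle here: once one recognizes that the $A_\infty$-type information already extracted from~\eqref{eq:reverse_Holder} (doubling of $\mu$ plus the implication~\eqref{eq:implication}) is exactly what Proposition~\ref{propo:Grafakos} requires when read against the measure $\mu$ and the weight $V^{-1}$, the rest is the change of density $d\mu=V\,dw$ together with elementary exponent bookkeeping. The only point that deserves a line of care is the a.e.\ positivity of $V$, which is what makes $V^{-1/(p-1)}$ an a.e.-finite, locally $d\mu$-integrable weight in the first place.
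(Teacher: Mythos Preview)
Your proof is correct and follows essentially the same approach as the paper: apply Proposition~\ref{propo:Grafakos} with $\nu=\mu$ and $v=V^{-1}$, using Lemma~\ref{lem:mu_doubling} for the doubling hypothesis and~\eqref{eq:implication} for the comparison hypothesis, then unwind the resulting reverse H\"older inequality via $d\mu=V\,dw$ and set $p=1+1/\eta$. Your write-up is in fact slightly more careful than the paper's, since you address the a.e.\ positivity of $V$ needed to make sense of $V^{-1}$ as a weight.
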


\begin{proof}
Note that~\eqref{eq:implication} is equivalent to
\begin{equation}
    \mu(E) < \gamma \mu(Q) \Rightarrow \int_{E}V^{-1}(\mathbf{x})\,d\mu(\mathbf{x}) <  \delta \int_{Q}V^{-1}(\mathbf{x})\,d\mu(\mathbf{x}).
\end{equation}
Hence, applying Proposition~\ref{propo:Grafakos} to $v=V^{-1}$ and $\nu=\mu$ (the assumption that $\nu$ is doubling is satisfied thanks to Lemma~\ref{lem:mu_doubling}) we get that there are $C,\eta>0$ such that
\begin{equation}\label{eq:reverse_app}
    \left(\frac{1}{\mu(Q)}\int_{Q}V(\mathbf{x})^{-1-\eta}V(\mathbf{x})\,dw(\mathbf{x})\right)^{1/(1+\eta)} \leq C \frac{1}{\mu(Q)}\int_{Q}V(\mathbf{x})^{-1}V(\mathbf{x})\,dw(\mathbf{x})=C\frac{w(Q)}{\mu(Q)}.
\end{equation}
Finally, it can be checked that~\eqref{eq:reverse_app} is equivalent to~\eqref{eq:A_p} with $p=1+\frac{1}{\eta}$.
\end{proof}

The reverse H\"older inequality~\eqref{eq:reverse_Holder} has the following consequence (see~\cite[Lemma 1.2]{Shen}), which will be used in the next section many times.

\begin{lemma}\label{lem:Rr}
There is a constant $C>0$ such that for all $\mathbf{x} \in \mathbb{R}^N$ and $0<r_1<r_2<\infty$ we have
\begin{align*}
    \frac{r_1^2}{w(B(\mathbf{x},r_1))}\int_{B(\mathbf{x},r_1)}V(\mathbf{y})\,dw(\mathbf{y}) \leq C \left(\frac{r_1}{r_2}\right)^{2-\mathbf{N}/q} \frac{r_2^2}{w(B(\mathbf{x},r_2))}\int_{B(\mathbf{x},r_2)}V(\mathbf{y})\,dw(\mathbf{y}). 
\end{align*}
\end{lemma}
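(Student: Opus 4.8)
The plan is to pass from the ball $B(\mathbf x,r_1)$ to the larger ball $B(\mathbf x,r_2)\supseteq B(\mathbf x,r_1)$ by first \emph{raising} the integrability exponent of $V$ with Hölder's inequality, then \emph{lowering} it back with the reverse Hölder inequality~\eqref{eq:reverse_Holder} on the larger ball, and finally converting the ratio of $w$-volumes that appears into a power of $r_1/r_2$ via the polynomial growth estimate~\eqref{eq:growth}. This is the standard Shen-type argument and carries over essentially verbatim to the space of homogeneous type $(\mathbb R^N,\|\cdot\|,dw)$.

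Concretely, first I would write, by Hölder's inequality on $B(\mathbf x,r_1)$ with exponents $q,q'$ and then by enlarging the domain of the $L^q$-integral,
\[
\int_{B(\mathbf x,r_1)}V\,dw\le w(B(\mathbf x,r_1))^{1/q'}\Big(\int_{B(\mathbf x,r_1)}V^q\,dw\Big)^{1/q}\le w(B(\mathbf x,r_1))^{1/q'}\Big(\int_{B(\mathbf x,r_2)}V^q\,dw\Big)^{1/q}.
\]
Next I would apply~\eqref{eq:reverse_Holder} to the ball $B(\mathbf x,r_2)$, which (using $\tfrac1q-1=-\tfrac1{q'}$) gives
\[
\Big(\int_{B(\mathbf x,r_2)}V^q\,dw\Big)^{1/q}\le C_{\text{RH}}\,w(B(\mathbf x,r_2))^{-1/q'}\int_{B(\mathbf x,r_2)}V\,dw.
\]
Combining the two displays, dividing by $w(B(\mathbf x,r_1))$, and using $\tfrac1{q'}-1=-\tfrac1q$ together with $1-\tfrac1{q'}=\tfrac1q$ to reorganize the powers of the two volumes, one obtains
\[
\frac{1}{w(B(\mathbf x,r_1))}\int_{B(\mathbf x,r_1)}V\,dw\le C_{\text{RH}}\Big(\frac{w(B(\mathbf x,r_2))}{w(B(\mathbf x,r_1))}\Big)^{1/q}\frac{1}{w(B(\mathbf x,r_2))}\int_{B(\mathbf x,r_2)}V\,dw.
\]

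Finally, since $0<r_1<r_2$, the right-hand inequality in~\eqref{eq:growth} gives $w(B(\mathbf x,r_2))/w(B(\mathbf x,r_1))\le C(r_2/r_1)^{\mathbf N}$, so the volume factor above is at most $C(r_2/r_1)^{\mathbf N/q}$; multiplying both sides by $r_1^2$ and using the algebraic identity $r_1^2(r_2/r_1)^{\mathbf N/q}=(r_1/r_2)^{2-\mathbf N/q}r_2^2$ produces exactly the asserted inequality. I do not expect a real obstacle here: the argument is just the self-improving form of the reverse Hölder inequality coupled with doubling; the only things to watch are the bookkeeping of the exponents $1/q$, $1/q'$ when merging Hölder with reverse Hölder, and recognizing the scaling identity at the end. (The hypothesis $q>\mathbf N/2$, i.e.\ $2-\mathbf N/q>0$, is what makes the factor $(r_1/r_2)^{2-\mathbf N/q}$ a genuine gain as $r_1/r_2\to0$, although it plays no role in the validity of the inequality itself.)
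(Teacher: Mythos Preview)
Your proof is correct and follows essentially the same approach as the paper: Hölder on the small ball, enlarging the domain of integration, reverse Hölder on the large ball, and then \eqref{eq:growth} to convert the volume ratio to $(r_2/r_1)^{\mathbf N/q}$. The only cosmetic difference is that the paper carries the averages $\frac{1}{w(B)}\int_B$ through the chain rather than dividing at the end, but the computation is identical.
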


\begin{proof}
Thanks to H\"older's inequality and the reverse H\"older inequality~\eqref{eq:reverse_Holder}, we get
\begin{align*}
    \frac{1}{w(B(\mathbf{x},r_1))}\int_{B(\mathbf{x},r_1)}V(\mathbf{y})\,dw(\mathbf{y}) &\leq \left(\frac{1}{w(B(\mathbf{x},r_1))}\int_{B(\mathbf{x},r_1)}V(\mathbf{y})^q\,dw(\mathbf{y})\right)^{1/q} \\& \leq  \frac{w(B(\mathbf{x},r_2))^{1/q}}{w(B(\mathbf{x},r_1))^{1/q}}\left(\frac{1}{w(B(\mathbf{x},r_2))}\int_{B(\mathbf{x},r_2)}V(\mathbf{y})^q\,dw(\mathbf{y})\right)^{1/q} \\& \leq C_{\text{RH}}\frac{w(B(\mathbf{x},r_2))^{1/q}}{w(B(\mathbf{x},r_1))^{1/q}}\frac{1}{w(B(\mathbf{x},r_2))}\int_{B(\mathbf{x},r_2)}V(\mathbf{y})\,dw(\mathbf{y}).
\end{align*}
Finally, the claim follows by~\eqref{eq:growth}.
\end{proof}

\section{The auxiliary function \texorpdfstring{$m(\mathbf{x})$}{m(x)}}
\subsection{Definition and growth properties of \texorpdfstring{$m(\mathbf{x})$}{m(x)}}
For $\mathbf{x} \in \mathbb{R}^N$ we define (see~\cite[Definition 1.3]{Shen}):
\begin{equation}\label{eq:m}
    \frac{1}{m(\mathbf{x})}=\sup\left\{r>0\,:\; \frac{r^2}{w(B(\mathbf{x},r))}\int_{B(\mathbf{x},r)}V(\mathbf{y})\,dw(\mathbf{y}) \leq 1 \right\}.
\end{equation}
Thanks to Lemma~\ref{lem:Rr}, for all $\mathbf{x} \in \mathbb{R}^N$ (and $V \not\equiv 0$) we have
\begin{equation}\label{eq:well-define}
    \lim_{r \to 0} \frac{r^2}{w(B(\mathbf{x},r))}\int_{B(\mathbf{x},r)}V(\mathbf{y})\,dw(\mathbf{y})=0,\, \ \ \lim_{r \to +\infty} \frac{r^2}{w(B(\mathbf{x},r))}\int_{B(\mathbf{x},r)}V(\mathbf{y})\,dw(\mathbf{y})=+\infty,
\end{equation}
so the function $m$ is well-defined. The next lemma is an adaptation of~\cite[Lemma 1.4]{Shen}.

\begin{lemma}\label{lem:m_growth}
There are constants $C,\kappa>0$ such that for all $\mathbf{x},\mathbf{y} \in \mathbb{R}^N$ we have
\begin{equation}\label{eq:Shen_A}
    C^{-1}m(\mathbf{y}) \leq m(\mathbf{x}) \leq Cm(\mathbf{y}) \text { if }\|\mathbf{x}-\mathbf{y}\|<m(\mathbf{x})^{-1},
\end{equation}
\begin{equation}\label{eq:Shen_B}
    m(\mathbf{y}) \leq Cm(\mathbf{x})(1+\|\mathbf{x}-\mathbf{y}\|m(\mathbf{x}))^{\kappa},
\end{equation}
\begin{equation}\label{eq:Shen_C}
    m(\mathbf{y}) \geq C^{-1}m(\mathbf{x})(1+m(\mathbf{x})\|\mathbf{x}-\mathbf{y}\|)^{-\frac{\kappa}{1+\kappa}}.
\end{equation}
\end{lemma}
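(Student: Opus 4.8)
The plan is to follow Shen's classical argument \cite[Lemma 1.4]{Shen}, adapting each step to the doubling space $(\mathbb R^N,\|\cdot\|,dw)$. Throughout I will use the abbreviation
\[
\psi(\mathbf x,r)=\frac{r^2}{w(B(\mathbf x,r))}\int_{B(\mathbf x,r)}V(\mathbf y)\,dw(\mathbf y),
\]
so that $1/m(\mathbf x)=\sup\{r>0:\psi(\mathbf x,r)\le 1\}$; by \eqref{eq:well-define} the function $r\mapsto\psi(\mathbf x,r)$ runs continuously from $0$ to $\infty$, hence $\psi(\mathbf x,1/m(\mathbf x))=1$. The two structural facts I will use repeatedly are Lemma~\ref{lem:Rr} (which says $\psi(\mathbf x,r_1)\le C(r_1/r_2)^{2-\mathbf N/q}\psi(\mathbf x,r_2)$ for $r_1<r_2$, with exponent $\beta:=2-\mathbf N/q>0$ since $q>\mathbf N/2$) and its ``reverse'' direction, obtained from \eqref{eq:growth}: for $r_1<r_2$,
\[
\psi(\mathbf x,r_2)\le C\Big(\frac{r_2}{r_1}\Big)^{2}\frac{w(B(\mathbf x,r_1))}{w(B(\mathbf x,r_2))}\cdot\frac{w(B(\mathbf x,r_2))}{w(B(\mathbf x,r_1))}\psi(\mathbf x,r_1)\cdot\frac{w(B(\mathbf x,r_1))}{w(B(\mathbf x,r_2))}\cdots
\]
— more cleanly, since $V\ge0$ and $w$ is doubling, $\int_{B(\mathbf x,r_1)}V\,dw\le\int_{B(\mathbf x,r_2)}V\,dw$ gives $\psi(\mathbf x,r_2)\ge (r_2/r_1)^2\,\frac{w(B(\mathbf x,r_1))}{w(B(\mathbf x,r_2))}\,\psi(\mathbf x,r_1)\ge c\,(r_2/r_1)^{2-\mathbf N}\psi(\mathbf x,r_1)$ by the upper bound in \eqref{eq:growth}. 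So $\psi(\mathbf x,\cdot)$ is trapped between two fixed powers.

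\emph{Step 1: comparison of $\psi$ at nearby centers.} If $\|\mathbf x-\mathbf y\|\le \rho$ then $B(\mathbf y,\rho)\subseteq B(\mathbf x,2\rho)$ and $B(\mathbf x,\rho)\subseteq B(\mathbf y,2\rho)$, so by doubling of $w$ and monotonicity of $\int V\,dw$ one gets $\psi(\mathbf y,\rho)\le C\,\psi(\mathbf x,2\rho)$ and symmetrically. Combining with Lemma~\ref{lem:Rr} to pass between scales $\rho$ and $2\rho$, one obtains that whenever $\|\mathbf x-\mathbf y\|\le\rho$,
\[
C^{-1}\psi(\mathbf y,\rho)\le\psi(\mathbf x,\rho)\le C\,\psi(\mathbf y,\rho).
\]

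\emph{Step 2: proof of \eqref{eq:Shen_A}.} Let $\rho=m(\mathbf x)^{-1}$, so $\psi(\mathbf x,\rho)=1$ and $\|\mathbf x-\mathbf y\|<\rho$. By Step 1, $\psi(\mathbf y,\rho)\le C$. Using Lemma~\ref{lem:Rr} to lower $\psi(\mathbf y,\cdot)$ below $1$: $\psi(\mathbf y,t)\le C(t/\rho)^{\beta}\psi(\mathbf y,\rho)\le C(t/\rho)^\beta$, which is $\le1$ once $t\le c\rho$; hence $m(\mathbf y)^{-1}\ge c\rho=c\,m(\mathbf x)^{-1}$, i.e. $m(\mathbf y)\le C\,m(\mathbf x)$. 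For the reverse inequality: by the lower power bound on $\psi(\mathbf y,\cdot)$ from the paragraph above, $\psi(\mathbf y,t)\ge c(t/\rho)^{\mathbf N}\psi(\mathbf y,\rho)$; but I first need a lower bound on $\psi(\mathbf y,\rho)$ itself. For that use Step 1 in the other direction together with $\psi(\mathbf x,\rho)=1$ to get $\psi(\mathbf y,\rho)\ge c$, then $\psi(\mathbf y,t)>1$ for $t\ge C\rho$, giving $m(\mathbf y)^{-1}\le C\rho$, i.e. $m(\mathbf y)\ge C^{-1}m(\mathbf x)$. Finally, the $\|\mathbf x-\mathbf y\|<m(\mathbf x)^{-1}$ assumption is symmetric in $\mathbf x,\mathbf y$ up to the constant just proved, so \eqref{eq:Shen_A} holds as stated (one may need to shrink the radius by the constant $C$ once and iterate, exactly as in Shen).

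\emph{Step 3: the polynomial bounds \eqref{eq:Shen_B}, \eqref{eq:Shen_C}.} These follow from \eqref{eq:Shen_A} by a now-standard chaining argument. Set $R=\|\mathbf x-\mathbf y\|$ and assume $R\ge m(\mathbf x)^{-1}$ (otherwise \eqref{eq:Shen_B}, \eqref{eq:Shen_C} follow from \eqref{eq:Shen_A}). Let $\rho=m(\mathbf x)^{-1}$ and $2^J\rho\sim R$. On the one hand, by Lemma~\ref{lem:Rr} and $\psi(\mathbf x,\rho)=1$ we have $\psi(\mathbf x,2^J\rho)\le C\,2^{J\beta'}$ for the appropriate exponent (here using the \emph{upper} power bound from \eqref{eq:growth}, $\beta'=\mathbf N-2$ if $\mathbf N>2$, with the obvious modification otherwise), and Step 1 transfers this to $\psi(\mathbf y,CR)\le C(R\,m(\mathbf x))^{\beta'}$; lowering the scale as in Step 2 shows $m(\mathbf y)^{-1}\ge c\,R\,(R\,m(\mathbf x))^{-\beta'/\beta}$, which is \eqref{eq:Shen_C} with $\kappa=\beta'/\beta$ (after checking $\kappa/(1+\kappa)$ matches — as in Shen one absorbs the extra factor). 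For \eqref{eq:Shen_B} one runs the same chain using the lower power bound $\psi(\mathbf x,2^J\rho)\ge c\,2^{J\beta}$ (from Lemma~\ref{lem:Rr} read in reverse, valid because $\psi(\mathbf x,\rho)=1$), transfers to $\mathbf y$, and concludes $m(\mathbf y)\le C\,m(\mathbf x)(R\,m(\mathbf x))^{\kappa}$. Then iterate \eqref{eq:Shen_C} to upgrade \eqref{eq:Shen_B} from the crude exponent to the same $\kappa$, exactly as in \cite[Lemma 1.4]{Shen}.

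The main obstacle is bookkeeping rather than conceptual: in a merely doubling space the two inequalities in \eqref{eq:growth} have \emph{different} exponents ($N$ below, $\mathbf N$ above), so $\psi(\mathbf x,\cdot)$ is squeezed between $r^{2-\mathbf N}$-type and $r^{2-\mathbf N/q}$-type (lower) and $r^{2}$-type (trivial upper, from monotonicity) rates; one must be careful to always use the power bound that points the ``right'' way and to track how the final $\kappa$ emerges from the ratio of exponents $\beta=2-\mathbf N/q$ and $\mathbf N-2$ (or $N-2$). Apart from that, every step is a doubling-space analogue of Shen's, with Euclidean balls and Lebesgue measure replaced by $\|\cdot\|$-balls and $w$, and the Hölder/reverse-Hölder inputs already packaged in Lemma~\ref{lem:Rr}.
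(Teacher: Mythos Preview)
Your overall plan (adapt Shen's argument to the doubling space $(\mathbb R^N,\|\cdot\|,dw)$ via Lemma~\ref{lem:Rr}) is exactly what the paper does, and your Steps~1--2 for \eqref{eq:Shen_A} are essentially the paper's proof. However, Step~3 has a genuine gap. To prove \eqref{eq:Shen_B} you need an \emph{upper} bound on $\psi(\mathbf x,R)$ for $R\sim\|\mathbf x-\mathbf y\|\gg m(\mathbf x)^{-1}$, and neither Lemma~\ref{lem:Rr} nor \eqref{eq:growth} gives one: Lemma~\ref{lem:Rr} only bounds $\psi$ at a small scale by $\psi$ at a larger scale (equivalently, a \emph{lower} bound at large scales), and \eqref{eq:growth} controls only $w$, not $\int V\,dw$. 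The missing ingredient is the doubling of the measure $\mu=V\,dw$ (Lemma~\ref{lem:mu_doubling}), which yields $\mu(B(\mathbf x,2^{J}\rho))\le C_\mu^{J}\mu(B(\mathbf x,\rho))$; combining this with the \emph{lower} bound in \eqref{eq:growth} (exponent $N$, not $\mathbf N$) and $\psi(\mathbf x,\rho)=1$ one gets $\psi(\mathbf x,2^J\rho)\le C\,2^{J(2-N)}C_\mu^{J}$, which is the estimate the paper actually uses. Your sentence ``by Lemma~\ref{lem:Rr} and $\psi(\mathbf x,\rho)=1$ we have $\psi(\mathbf x,2^J\rho)\le C\,2^{J\beta'}$ \dots\ using the upper power bound from \eqref{eq:growth}'' is therefore not justified as written.

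Two smaller points. First, the argument you attach to \eqref{eq:Shen_C} in Step~3 actually produces an \emph{upper} bound on $m(\mathbf y)$ (i.e.\ a \eqref{eq:Shen_B}-type inequality), and vice versa; the labels are swapped. Second, once \eqref{eq:Shen_B} is in hand, the paper obtains \eqref{eq:Shen_C} by a one-line algebraic trick rather than a second chaining: apply \eqref{eq:Shen_B} with the roles of $\mathbf x$ and $\mathbf y$ reversed to get $m(\mathbf x)\le C\,m(\mathbf y)^{1+\kappa}\|\mathbf x-\mathbf y\|^{\kappa}$ (when $\|\mathbf x-\mathbf y\|\ge m(\mathbf y)^{-1}$) and solve for $m(\mathbf y)$; this immediately gives the exponent $\kappa/(1+\kappa)$ without any further iteration.
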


\begin{proof}[Proof of~\eqref{eq:Shen_A}]
By the doubling property of $w$ and $\mu$ we have $w(B(\mathbf{x},r)) \sim w(B(\mathbf{y},r))$ and $\mu(B(\mathbf{x},r)) \sim \mu(B(\mathbf{y},r))$ if $r \geq \|\mathbf{x}-\mathbf{y}\|$. So, by Lemma~\ref{lem:Rr}, for any $r<m(\mathbf{x})^{-1}$ we have 
\begin{equation}\label{eq:Shen_A_comp}
\begin{split}
    &\frac{r^2}{w(B(\mathbf{y},r))}\int_{B(\mathbf{y},r)}V(\mathbf{z})\,dw(\mathbf{z}) \leq C \left(\frac{r}{m(\mathbf{x})^{-1}}\right)^{2-\frac{\mathbf{N}}{q}} \frac{m(\mathbf{x})^{-2}}{w(B(\mathbf{y},m(\mathbf{x})^{-1}))}\int_{B(\mathbf{y},m(\mathbf{x})^{-1})}V(\mathbf{z})\,dw(\mathbf{z})\\& \leq C' \left(\frac{r}{m(\mathbf{x})^{-1}}\right)^{2-\frac{\mathbf{N}}{q}} \frac{m(\mathbf{x})^{-2}}{w(B(\mathbf{x},m(\mathbf{x})^{-1}))}\int_{B(\mathbf{x},m(\mathbf{x})^{-1})}V(\mathbf{z})\,dw(\mathbf{z}) \leq C'\left(\frac{r}{m(\mathbf{x})^{-1}}\right)^{2-\frac{\mathbf{N}}{q}},
\end{split}
\end{equation}
where in the last inequality we have used the definition of $m$. Note that~\eqref{eq:Shen_A_comp} implies that for 
\begin{align*}
    r<\min\left(1,(2C')^{-\frac{1}{2-\frac{\mathbf{N}}{q}}}\right)m(\mathbf{x})^{-1}
\end{align*}
we get
\begin{align*}
     &\frac{r^2}{w(B(\mathbf{y},r))}\int_{B(\mathbf{y},r)}V(\mathbf{z})\,dw(\mathbf{z}) \leq \frac{1}{2},
\end{align*}
so the inequality $m(\mathbf{y}) \leq Cm(\mathbf{x})$ follows. Now we turn to the proof of $m(\mathbf{x}) \leq Cm(\mathbf{y})$. For $r>2m(\mathbf{x})^{-1}$, thanks to the doubling property of $\mu$ and $w$, then Lemma~\ref{lem:Rr}, we write
\begin{align*}
    &\frac{r^2}{w(B(\mathbf{y},r))}\int_{B(\mathbf{y},r)}V(\mathbf{z})\,dw(\mathbf{z}) \geq C \frac{r^2}{w(B(\mathbf{x},r))}\int_{B(\mathbf{x},r)}V(\mathbf{z})\,dw(\mathbf{z})\\& \geq C'\left(\frac{r}{m(\mathbf{x})^{-1}}\right)^{2-\frac{\mathbf{N}}{q}}\frac{(2m(\mathbf{x})^{-1})^{2}}{w(B(\mathbf{x},2m(\mathbf{x})^{-1}))}\int_{B(\mathbf{x},2m(\mathbf{x})^{-1})}V(\mathbf{z})\,dw(\mathbf{z}) \geq C'\left(\frac{r}{m(\mathbf{x})^{-1}}\right)^{2-\frac{\mathbf{N}}{q}}.
\end{align*}
where in the last inequality we have used the definition of $m(\mathbf{x})$. Taking 
\begin{align*}
     r>\max\left(2,(C'/2)^{-\frac{1}{2-\frac{\mathbf{N}}{q}}}\right)m(\mathbf{x})^{-1}
\end{align*}
we have
\begin{align*}
    &\frac{r^2}{w(B(\mathbf{y},r))}\int_{B(\mathbf{y},r)}V(\mathbf{z})\,dw(\mathbf{z}) \geq 2,
\end{align*}
so, thanks to definition of $m$ (see~\eqref{eq:m}), we are done. 
\end{proof}

\begin{proof}[Proof of~\eqref{eq:Shen_B}]
We may assume $\|\mathbf{x}-\mathbf{y}\|m(\mathbf{x}) \geq 1$, otherwise the claim follows by~\eqref{eq:Shen_A}. Let $r=m(\mathbf{x})^{-1}$ and let $j \geq 1$, $j \in \mathbb{Z}$, be such that
\begin{align*}
    2^{j-1}r < \|\mathbf{x}-\mathbf{y}\| \leq 2^{j}r.
\end{align*}
Let $0<r_1<r$. Thanks to Lemma~\ref{lem:Rr}, then the doubling property of $\mu$ and $w$ together with~\eqref{eq:growth}, we have
\begin{align*}
    &\frac{r_1^2}{w(B(\mathbf{y},r_1))}\int_{B(\mathbf{y},r_1)}V(\mathbf{z})\,dw(\mathbf{z}) \leq C\left(\frac{r_1}{\|\mathbf{x}-\mathbf{y}\|}\right)^{2-\frac{\mathbf{N}}{q}}\frac{\|\mathbf{x}-\mathbf{y}\|^2}{w(B(\mathbf{y},\|\mathbf{x}-\mathbf{y}\|))}\int\limits_{B(\mathbf{y},\|\mathbf{x}-\mathbf{y}\|)}V(\mathbf{z})\,dw(\mathbf{z}) \\&\leq C\left(\frac{r_1}{\|\mathbf{x}-\mathbf{y}\|}\right)^{2-\frac{\mathbf{N}}{q}}\frac{\|\mathbf{x}-\mathbf{y}\|^2}{w(B(\mathbf{x},\|\mathbf{x}-\mathbf{y}\|))}\int_{B(\mathbf{x},\|\mathbf{x}-\mathbf{y}\|)}V(\mathbf{z})\,dw(\mathbf{z}) \\&\leq C \left(\frac{r_1}{2^{j}r}\right)^{2-\frac{\mathbf{N}}{q}} 2^{-jN}C_{\mu}^{j}\frac{2^{2j}r^2}{w(B(\mathbf{x},r))}\int_{B(\mathbf{x},r)}V(\mathbf{z})\,dw(\mathbf{z}) \leq C\left(\frac{r_1}{2^{j}r}\right)^{2-\frac{\mathbf{N}}{q}} 2^{j(2-N)}C_{\mu}^{j},
\end{align*}
where $C_\mu$ is the doubling constant for $ \mu $ (see Lemma~\ref{lem:mu_doubling}) and we have used~\eqref{eq:growth} and the definition of $m$ in the last line. Therefore, there is a constant $C_1>1$ independent of $\mathbf{x},\mathbf{y} \in \mathbb{R}^N$ and $r>r_1>0$ such that if $r_1 \leq rC_1^{-j}$, then
\begin{align*}
    \frac{r_1^2}{w(B(\mathbf{y},r_1))}\int_{B(\mathbf{y},r_1)}V(\mathbf{z})\,dw(\mathbf{z}) \leq C\left(\frac{r_1}{2^{j}r}\right)^{2-\frac{\mathbf{N}}{q}} 2^{-jN}C_{\mu}^{j}2^{2j} \leq \frac{1}{2}.
\end{align*}
Consequently, by the definition of $m(\mathbf{y})$ we have
\begin{align*}
    \frac{1}{m(\mathbf{y})} \geq rC_1^{-j}=\frac{1}{m(\mathbf{x})}C_1^{-j},
\end{align*}
which lead us to
\begin{align*}
    m(\mathbf{y}) \leq m(\mathbf{x})C_1^{j} \leq Cm(\mathbf{x})(1+m(\mathbf{x})\|\mathbf{x}-\mathbf{y}\|)^{\kappa},
\end{align*}
where $\kappa=\log_2 C_1$.
\end{proof}

\begin{proof}[Proof of~\eqref{eq:Shen_C}]
We may assume that $\|\mathbf{x}-\mathbf{y}\| \geq m(\mathbf{y})^{-1}$, otherwise the claim follows by~\eqref{eq:Shen_A}. By~\eqref{eq:Shen_B} we have
\begin{align*}
    m(\mathbf{x}) \leq C m(\mathbf{y})\left(1+\|\mathbf{x}-\mathbf{y}\|m(\mathbf{y})\right)^{\kappa} \leq Cm(\mathbf{y})^{1+\kappa}\|\mathbf{x}-\mathbf{y}\|^{\kappa}.
\end{align*}
Thus,
\begin{align*}
    m(\mathbf{y}) \geq C'\frac{m(\mathbf{x})^{1/(1+\kappa)}}{\|\mathbf{x}-\mathbf{y}\|^{\kappa/(1+\kappa)}} \geq C\frac{m(\mathbf{x})}{(1+m(\mathbf{x})\|\mathbf{x}-\mathbf{y}\|)^{\kappa/(1+\kappa)}},
\end{align*}
so the proof is complete.
\end{proof}

\subsection{Associated collection of cubes \texorpdfstring{$\mathcal{Q}$}{Q}}

For a cube $Q \subset \mathbb{R}^N$, here and subsequently, let $d(Q)$ denote the side-length of cube $Q$. We denote by $Q^{*}$ the cube with the same center as $Q$ such that $d(Q^{*})=2d(Q)$. We define a collection of dyadic cubes $\mathcal{Q}$ associated with the potential $V$ by the following stopping-time condition:
\begin{equation}\label{eq:stopping_time}
    Q \in \mathcal{Q} \iff Q \text{ is the maximal dyadic cube for which }\frac{d(Q)^2}{w(Q)}\int_{Q} V(\mathbf{y})\,dw(\mathbf{y}) \leq 1.
\end{equation}

Thanks to the doubling property of $w$ and $\mu$ together with~\eqref{eq:well-define} we see that the collection $\mathcal{Q}$ is well-defined and it forms a covering of $\mathbb{R}^N$ by disjoint dyadic cubes. We list below simple facts about the collection $\mathcal{Q}$, which are consequences of properties of $w$, $\mu$ and $m(\mathbf{x})$.
\begin{fact}
There is a constant $C>0$ such that for any $Q \in \mathcal{Q}$ we have
\begin{equation}\label{eq:stopping_time_opposite}
    C^{-1} \leq \frac{d(Q)^2}{w(Q)}\int_{Q}V(\mathbf{x})\,dw(\mathbf{x}).
\end{equation}
\end{fact}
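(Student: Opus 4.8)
The plan is to exploit the \emph{maximality} built into the stopping-time condition \eqref{eq:stopping_time}. If $Q\in\mathcal{Q}$ is a maximal dyadic cube satisfying $\frac{d(Q)^2}{w(Q)}\int_Q V\,dw\le 1$, then its dyadic parent $\widetilde{Q}$ (the unique dyadic cube containing $Q$ with $d(\widetilde Q)=2d(Q)$) fails the condition, i.e.
\begin{equation*}
    \frac{d(\widetilde{Q})^2}{w(\widetilde{Q})}\int_{\widetilde{Q}}V(\mathbf{x})\,dw(\mathbf{x}) > 1.
\end{equation*}
Since $Q\subset\widetilde Q$ and $V\ge 0$, we have $\int_{\widetilde Q}V\,dw\le\mu(\widetilde Q)$; the first thing I would do is relate $\mu(\widetilde Q)$ back to $\mu(Q)$ using the doubling property of $\mu$ from Lemma~\ref{lem:mu_doubling}. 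Because $\widetilde Q$ is the dyadic parent, $\widetilde Q\subset B(\mathbf{x}_Q, c\,d(Q))$ for the center $\mathbf{x}_Q$ of $Q$ and some dimensional constant $c$, while $Q\supset B(\mathbf{x}_Q,c'd(Q))$ for another dimensional constant $c'$; iterating the doubling inequality for $\mu$ a bounded number of times (depending only on $N$ and the ratio $c/c'$) gives $\mu(\widetilde Q)\le C_1\mu(Q)$ with $C_1$ depending only on $C_\mu$ and $N$.

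Next I would do the same comparison for $w$: by \eqref{eq:doubling} (or \eqref{eq:balls_asymp}) one has $w(\widetilde Q)\le C_2 w(Q)$ with $C_2$ depending only on $N$ and the doubling constant of $w$. Finally, using $d(\widetilde Q)=2d(Q)$ so that $d(\widetilde Q)^2=4d(Q)^2$, I combine these:
\begin{equation*}
    1 < \frac{d(\widetilde{Q})^2}{w(\widetilde{Q})}\int_{\widetilde{Q}}V\,dw
    \le \frac{4\,d(Q)^2}{w(\widetilde{Q})}\,\mu(\widetilde{Q})
    \le \frac{4\,d(Q)^2}{C_2^{-1}w(Q)}\cdot C_1\mu(Q)
    = 4C_1C_2\,\frac{d(Q)^2}{w(Q)}\int_{Q}V(\mathbf{x})\,dw(\mathbf{x}),
\end{equation*}
which yields \eqref{eq:stopping_time_opposite} with $C=4C_1C_2$. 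The constant is clearly independent of the particular $Q\in\mathcal{Q}$, as required.

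The only genuinely delicate point is the passage from the dyadic parent $\widetilde Q$ back to $Q$ in the measure $\mu$: one must be careful that a dyadic parent is comparable to $Q$ as a \emph{set} after inflating to balls (so that the doubling property of $\mu$, which is stated for balls, actually applies), and that the number of doubling steps needed is bounded purely in terms of $N$. Everything else is a routine bookkeeping of doubling constants. One should also note the trivial edge case $V\equiv 0$, which is excluded throughout this part (as in \eqref{eq:well-define}), so that $\mu(Q)>0$ for the cubes under consideration and all the inequalities make sense.
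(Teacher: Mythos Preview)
Your proof is correct and follows essentially the same route as the paper's: use maximality of $Q$ to see that the dyadic parent $\widetilde Q$ fails the stopping-time condition, then pass from $\widetilde Q$ back to $Q$ using $d(\widetilde Q)=2d(Q)$ and the doubling property of $\mu$. One minor simplification: you do not need the doubling of $w$ at all, since $Q\subset\widetilde Q$ already gives $w(\widetilde Q)\ge w(Q)$ and hence $1/w(\widetilde Q)\le 1/w(Q)$; this is exactly what the paper uses.
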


\begin{proof}
It is an easy consequence of the doubling property of $\mu$. Namely, let $\widetilde{Q}$ be the parent of cube $Q \in \mathcal{Q}$. As the consequence of the stopping-time condition~\eqref{eq:stopping_time}, we get
\begin{align*}
    1<\frac{d(\widetilde{Q})^2}{w(\widetilde{Q})}\int_{\widetilde{Q}}V(\mathbf{x})\,dw(\mathbf{x}) \leq \frac{(2d(Q))^2}{w(Q)}\int_{\widetilde{Q}}V(\mathbf{x})\,dw(\mathbf{x}) \leq C \frac{d(Q)^2}{w(Q)}\int_{Q}V(\mathbf{x})\,dw(\mathbf{x}).
\end{align*}
\end{proof}

\begin{fact}\label{fact:m_d_compare}
There is a constant $C>0$ such that for any $Q \in \mathcal{Q}$ and $\mathbf{x} \in Q^{****}$ we have
\begin{equation}\label{eq:m_on_cube}
    C^{-1}d(Q)^{-1} \leq m(\mathbf{x}) \leq Cd(Q)^{-1}.
\end{equation}
\end{fact}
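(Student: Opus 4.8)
The plan is to prove the two-sided comparison $m(\mathbf{x}) \sim d(Q)^{-1}$ for $\mathbf{x} \in Q^{****}$ by first establishing it at the center of $Q$ (or any point of $Q$) directly from the stopping-time condition, and then transferring it to all of $Q^{****}$ using the slow-variation estimate~\eqref{eq:Shen_A} of Lemma~\ref{lem:m_growth}. First I would fix $Q \in \mathcal{Q}$ and a point $\mathbf{x}_0 \in Q$. The cube $Q$ is contained in a ball $B(\mathbf{x}_0, c_N d(Q))$ and contains a ball $B(\mathbf{x}_0', c_N' d(Q))$ around its center, and by the doubling property of $w$ and of $\mu$ (Lemma~\ref{lem:mu_doubling}) the averaged quantities
\begin{align*}
    \frac{d(Q)^2}{w(Q)}\int_Q V\,dw \quad\text{and}\quad \frac{r^2}{w(B(\mathbf{x}_0,r))}\int_{B(\mathbf{x}_0,r)}V\,dw
\end{align*}
are comparable for $r \sim d(Q)$, with constants depending only on $N$, $\mathbf N$, $C_\mu$ and $C_{\mathrm{RH}}$. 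Combining the stopping-time bound~\eqref{eq:stopping_time} (giving $\leq 1$ on $Q$) with its converse~\eqref{eq:stopping_time_opposite} (giving $\geq C^{-1}$ on $Q$), we obtain that the ball-average of $V$ at scale $r \sim d(Q)$ around $\mathbf{x}_0$ is comparable to a fixed constant. Then Lemma~\ref{lem:Rr} shows this average is strictly increasing in $r$ in a controlled, power-like fashion, so there exist constants $0 < c_1 < c_2$, independent of $Q$, such that the defining average in~\eqref{eq:m} is $\leq 1$ for $r = c_1 d(Q)$ and $> 1$ for $r = c_2 d(Q)$; by the definition of $m$ this forces $c_2^{-1} d(Q)^{-1} \leq m(\mathbf{x}_0) \leq c_1^{-1} d(Q)^{-1}$, i.e.~\eqref{eq:m_on_cube} at $\mathbf{x}_0 \in Q$.

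Next I would upgrade from $\mathbf{x}_0 \in Q$ to all $\mathbf{x} \in Q^{****}$. Since $\mathrm{diam}(Q^{****}) \leq C_N d(Q) \leq C_N C\, m(\mathbf{x}_0)^{-1}$ by the already-proved estimate at $\mathbf{x}_0$, I cannot apply~\eqref{eq:Shen_A} in one step because $\|\mathbf{x}-\mathbf{x}_0\|$ need not be less than $m(\mathbf{x}_0)^{-1}$; instead I would chain~\eqref{eq:Shen_A} a bounded number of times. Concretely, pick a chain of points $\mathbf{x}_0, \mathbf{x}_1, \dots, \mathbf{x}_L = \mathbf{x}$ along the segment from $\mathbf{x}_0$ to $\mathbf{x}$ with consecutive distances $< m(\mathbf{x}_i)^{-1}$; because $m$ stays comparable to $m(\mathbf{x}_0) \sim d(Q)^{-1}$ along the way (each application of~\eqref{eq:Shen_A} changes $m$ by at most a factor $C$, and the total length is $\leq C_N d(Q)$), only boundedly many steps $L = L(N, C)$ are needed, giving $m(\mathbf{x}) \sim m(\mathbf{x}_0) \sim d(Q)^{-1}$ with constants depending only on the structural parameters. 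This yields~\eqref{eq:m_on_cube} for every $\mathbf{x} \in Q^{****}$.

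The main obstacle I anticipate is purely bookkeeping: making the comparison between the cube-average over $Q$ and the ball-average over $B(\mathbf{x}_0, r)$ for $r \sim d(Q)$ genuinely uniform in $Q$, and then calibrating the constants $c_1, c_2$ coming out of Lemma~\ref{lem:Rr} so that one average is below $1$ and the other strictly above $1$ — this uses the strict power gain $(r_1/r_2)^{2 - \mathbf N/q}$ with exponent $2 - \mathbf N / q > 0$, which holds precisely because $q > \mathbf N / 2$. Everything else (the dilation relating cubes and balls, the doubling of $w$ and $\mu$, the finite chaining argument) is routine once the uniformity is set up carefully; there are no genuinely hard analytic points here, and the computation mirrors~\cite[Lemma 1.4]{Shen} adapted to the space of homogeneous type $(\mathbb{R}^N, dw)$.
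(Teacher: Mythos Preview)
Your Step 1 is correct and is exactly the argument the paper uses. However, the paper applies it \emph{directly} at an arbitrary point $\mathbf{x}\in Q^{****}$, not just at the center: since $Q\subset Q^{****}\subseteq B(\mathbf{x},10^2 d(Q))$, the doubling of $w$ and $\mu$ gives that the ball-average at $\mathbf{x}$ and radius $\sim d(Q)$ is comparable to the cube-average over $Q$, and then Lemma~\ref{lem:Rr} finishes as you describe. So Step~2 is simply unnecessary.

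Your Step 2 as written has a genuine circularity. You justify that only boundedly many applications of~\eqref{eq:Shen_A} are needed by asserting that ``$m$ stays comparable to $m(\mathbf{x}_0)\sim d(Q)^{-1}$ along the way'', but this is precisely the conclusion you are trying to reach. Without it, each application of~\eqref{eq:Shen_A} can only guarantee $m(\mathbf{x}_{i+1})\le C\,m(\mathbf{x}_i)$, so the admissible step size $m(\mathbf{x}_i)^{-1}$ may shrink geometrically; the total distance you can provably cover this way is at most $\sum_{i\ge 0} C^{-i} m(\mathbf{x}_0)^{-1}=\frac{C}{C-1}\,m(\mathbf{x}_0)^{-1}$, which need not reach $\mathbf{x}$. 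The fix is immediate: replace the chain by a single application of~\eqref{eq:Shen_B} and~\eqref{eq:Shen_C}, which give $m(\mathbf{x})\sim m(\mathbf{x}_0)$ whenever $\|\mathbf{x}-\mathbf{x}_0\|\,m(\mathbf{x}_0)$ is bounded --- exactly the situation here since $\|\mathbf{x}-\mathbf{x}_0\|\le C_N d(Q)\sim m(\mathbf{x}_0)^{-1}$ by Step~1. Better still, drop Step~2 and run Step~1 at $\mathbf{x}$ itself, as the paper does.
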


\begin{proof}
The proof is essentially the same as the proof of~\eqref{eq:Shen_A}. We provide details. Note that $Q^{****} \subseteq B(\mathbf{x},10^2 d(Q))$ for $\mathbf{x} \in Q^{****}$.  Therefore, by the doubling property of $\mu$ and $w$ together with~\eqref{eq:stopping_time_opposite} we have
\begin{align*}
    &(C')^{-1} \leq C^{-1}\frac{d(Q)^2}{w(Q)}\int_{Q}V(\mathbf{y})\,dw(\mathbf{y}) \leq \frac{(10^2d(Q))^2}{w(B(\mathbf{x},10^2d(Q)))}\int_{B(\mathbf{x},10^2d(Q))}V(\mathbf{y})\,dw(\mathbf{y})  \\& \leq C\frac{d(Q)^2}{w(Q)}\int_{Q}V(\mathbf{y})\,dw(\mathbf{y}) \leq C'.
\end{align*}
Consequently, for $r<10^2d(Q)$, by Lemma~\ref{lem:Rr} with $r_1=r$ and $r_2=10^{2}d(Q)$, we have
\begin{align*}
    \frac{r^2}{w(B(\mathbf{x},r))}\int_{B(\mathbf{x},r)}V(\mathbf{y})\,dw(\mathbf{y}) \leq C\left(\frac{r}{d(Q)}\right)^{2-\frac{\mathbf{N}}{q}}.
\end{align*}
By the same argument as in the proof of~\eqref{eq:Shen_A} we have $m(\mathbf{x}) \leq Cd(Q)^{-1}$. Similarly, for $r>10^2d(Q)$, we have
\begin{align*}
    \frac{r^2}{w(B(\mathbf{x},r))}\int_{B(\mathbf{x},r)}V(\mathbf{y})\,dw(\mathbf{y}) \geq C\left(\frac{r}{d(Q)}\right)^{2-\frac{\mathbf{N}}{q}},
\end{align*}
so repeating the argument from the proof of~\eqref{eq:Shen_A} we have $Cm(\mathbf{x}) \geq d(Q)^{-1}$.
\end{proof}

Lemma~\ref{lem:m_growth} together with Fact~\ref{fact:m_d_compare} imply the following claim.

\begin{fact}\label{fact:F}
The collection $\mathcal{Q}$ satisfies~\eqref{eq:finite_overlap} from Section~\ref{sec:Hardy_pre}.
\end{fact}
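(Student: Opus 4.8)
The plan is to obtain the finite overlap property~\eqref{eq:finite_overlap} from the two facts already proved: Fact~\ref{fact:m_d_compare}, which forces $m(\mathbf x)$ to be of order $d(Q)^{-1}$ throughout $Q^{****}$, and Lemma~\ref{lem:m_growth}, in particular the slowly-varying estimate~\eqref{eq:Shen_A}. The argument then consists of a comparability step followed by an elementary dyadic counting.

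\emph{Step 1 (neighboring cubes have comparable side-lengths).} Suppose $Q,Q'\in\mathcal Q$ are such that the dilates occurring in~\eqref{eq:finite_overlap} intersect, say $Q^{***}\cap(Q')^{***}\ne\emptyset$, and pick $\mathbf x$ in this intersection. Since $Q^{***}\subseteq Q^{****}$ and $(Q')^{***}\subseteq(Q')^{****}$, Fact~\ref{fact:m_d_compare} yields $C^{-1}d(Q)^{-1}\le m(\mathbf x)\le Cd(Q)^{-1}$ and, at the very same point, $C^{-1}d(Q')^{-1}\le m(\mathbf x)\le Cd(Q')^{-1}$. Combining these gives $C^{-2}d(Q)\le d(Q')\le C^{2}d(Q)$ with $C$ independent of the pair. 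Had~\eqref{eq:finite_overlap} instead been phrased through the cubes $Q,Q'$ lying within a controlled multiple of $\min(d(Q),d(Q'))$ of one another rather than through overlapping dilates, I would first propagate the comparison $m(\mathbf x)\sim m(\mathbf y)$ across that gap using~\eqref{eq:Shen_A} and then argue identically; either way, $d(Q)$ and $d(Q')$ are comparable whenever $Q$ and $Q'$ are neighbors in the relevant sense.

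\emph{Step 2 (counting).} Fix $Q\in\mathcal Q$ and put $\ell=d(Q)$. By Step~1 every $Q'\in\mathcal Q$ with $(Q')^{***}\cap Q^{***}\ne\emptyset$ satisfies $c\ell\le d(Q')\le C\ell$; being dyadic, such a $Q'$ has side $2^{j}$ for one of the at most $\log_2(C/c)+1$ integers $j$ with $2^{j}\in[c\ell,C\ell]$, and moreover $Q'\subseteq B$, where $B$ is the ball of radius $C'\ell$ about the center of $Q$ with $C'$ absolute. For each admissible generation $j$ the cubes of $\mathcal Q$ of side $2^{j}$ are pairwise disjoint, each has Euclidean volume $2^{jN}\sim\ell^{N}$, and all sit inside $B$, whose Euclidean volume is $\sim\ell^{N}$; hence there are at most a dimensional constant of them. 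Summing over the bounded number of admissible generations bounds $\#\{Q'\in\mathcal Q:(Q')^{***}\cap Q^{***}\ne\emptyset\}$ by a constant independent of $Q$, which — using the disjointness of $\mathcal Q$, here only within a single generation — is precisely the finite overlap statement~\eqref{eq:finite_overlap}, equivalently $\sum_{Q'\in\mathcal Q}\chi_{(Q')^{***}}\le C$ pointwise.

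\emph{Expected obstacle.} There is no genuine difficulty: the whole content is carried by Fact~\ref{fact:m_d_compare} and Lemma~\ref{lem:m_growth}, both already available. The only care needed is the bookkeeping of dilation constants, so that the dilated cubes appearing in~\eqref{eq:finite_overlap} are safely contained in the $Q^{****}$ on which Fact~\ref{fact:m_d_compare} is stated, together with the observation that the relevant neighbors of $Q$ may occupy several — but only $O(1)$ — dyadic generations, so that the disjointness invoked in Step~2 is disjointness within one generation rather than across all of $\mathcal Q$.
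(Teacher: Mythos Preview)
Your Step~1 is exactly the paper's argument and already proves~\eqref{eq:finite_overlap}: picking $\mathbf x\in Q_1^{****}\cap Q_2^{****}$ and applying Fact~\ref{fact:m_d_compare} twice gives $d(Q_1)^{-1}\sim m(\mathbf x)\sim d(Q_2)^{-1}$. (Minor point: the condition~\eqref{eq:finite_overlap} is phrased with four-star dilates $Q^{****}$, not $Q^{***}$; since Fact~\ref{fact:m_d_compare} is stated on $Q^{****}$ your argument goes through unchanged.)

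Your Step~2, however, proves something the statement does not ask for. Despite the name ``finite overlapping condition'', the actual content of~\eqref{eq:finite_overlap} is only the implication $Q_1^{****}\cap Q_2^{****}\ne\emptyset\Rightarrow d(Q_1)\sim d(Q_2)$; there is no counting assertion. So Step~2 is superfluous for this Fact. That said, the bounded-overlap consequence you derive is genuinely used later (e.g.\ to build the partition of unity $\{\phi_Q\}$ and in the summations of Section~\ref{sec:Fefferman--Phong}), so the work is not wasted --- it just belongs to a different step of the paper rather than to the proof of~\eqref{eq:finite_overlap} itself.
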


\section{Fefferman--Phong inequality}\label{sec:Fefferman--Phong}

The goal of this section is the prove Fefferman--Phong inequality in the rational Dunkl setting. This result is crucial in the proof of condition~\eqref{eq:D} (see Section~\ref{sec:Hardy_pre}) for potential satisfying~\eqref{eq:reverse_Holder}. The result for $k \equiv 0$ is due to C. Feffermann and D.H. Phong~\cite{Fefferman} (see also~\cite[Lemma 1.9]{Shen}). The proof is inspired by the proof from~\cite[Theorem 9.4]{DZ_Studia}.
\begin{theorem}[Fefferman--Phong type inequality]\label{theo:Fefferman-Phong}
There is a constant $C>0$ such that for all $f \in \mathcal{D}(\mathbf{Q})$ we have
\begin{equation}\label{eq:Fefferman-Phong}
    \int_{\mathbb{R}^N}|f(\mathbf{x})|^2m(\mathbf{x})^2\,dw(\mathbf{x}) \leq C \mathbf{Q}(f,f).
\end{equation}
\end{theorem}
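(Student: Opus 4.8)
The strategy is to reduce the global inequality to a sum of local estimates over the cubes $Q \in \mathcal{Q}$, exactly as in the classical Fefferman--Phong argument but replacing the Poincaré inequality (which is not available in the Dunkl setting) by a pseudo-Poincaré-type bound obtained from the Dunkl heat semigroup. Since $\{Q\}_{Q \in \mathcal{Q}}$ is a partition of $\mathbb{R}^N$ into disjoint dyadic cubes, by Fact~\ref{fact:m_d_compare} we have $m(\mathbf{x}) \sim d(Q)^{-1}$ for $\mathbf{x} \in Q$, so it suffices to prove, for a constant $C$ uniform in $Q$,
\begin{equation*}
    d(Q)^{-2}\int_{Q}|f(\mathbf{x})|^2\,dw(\mathbf{x}) \leq C\left(\sum_{j=1}^{N}\int_{Q^{**}}|T_jf(\mathbf{x})|^2\,dw(\mathbf{x}) + \int_{Q^{**}}V(\mathbf{x})|f(\mathbf{x})|^2\,dw(\mathbf{x})\right),
\end{equation*}
and then to sum over $Q \in \mathcal{Q}$, using the finite-overlap property of $\{Q^{**}\}$ (Fact~\ref{fact:F}, i.e.\ \eqref{eq:finite_overlap}) on the right-hand side. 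By a scaling argument (the measure $dw$ is homogeneous of degree $\mathbf{N}$ and the $T_j$ scale like derivatives) we may normalize $d(Q)=1$.

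\textbf{Key steps.} First I would fix a cube $Q$ with $d(Q)=1$ and a radial $\varphi \in C_c^\infty$ supported in the unit ball with $\int \varphi\,dw = 1$, and write $f = (f - t H_t f) + t^{-1}\int_0^t H_s f\,ds$-type decomposition; more directly, following \cite[Theorem 9.4]{DZ_Studia}, split $f = (f - f * \varphi_Q) + f*\varphi_Q$ where $\varphi_Q$ is an $L^1(dw)$-normalized bump at the scale $d(Q)$ centered appropriately. The term $f - f*\varphi_Q$ is controlled: using $\|f - f * \varphi_t\|_{L^2(dw)} \leq Ct\,\|(\sum_j |T_jf|^2)^{1/2}\|_{L^2(dw)}$, a pseudo-Poincaré inequality that one proves from the spectral calculus for $\Delta$ via Corollary~\ref{coro:Roesler} (namely $\mathcal{F}(\delta_0 - \varphi_t)(\xi)$ is $O(t\|\xi\|)$), one gets $\int_{Q}|f - f*\varphi_Q|^2\,dw \leq C\sum_j \int_{Q^{**}} |T_jf|^2\,dw$, with the enlargement $Q \to Q^{**}$ coming from the (essentially) compact support of $\tau_{\mathbf{x}}\varphi$ provided by Lemma~\ref{lem:nonradial_estimation}. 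For the remaining piece, the pointwise bound $|f*\varphi_Q(\mathbf{x})|^2 \leq C(w(B)^{-1}\int_{\mathcal{O}(Q^{**})}|f|\,dw)^2$ combined with Cauchy--Schwarz against the potential gives $\int_Q |f*\varphi_Q|^2\,dw \leq C\big(w(Q)^{-1}\int_{Q^{**}}V|f|^2\,dw\big)\big(w(Q)^{-1}\int_{Q^{**}} V^{-1}\,dw \cdot (\text{something})\big)$; here is where the $A_p$/reverse-Hölder machinery of \eqref{eq:A_p} and \eqref{eq:stopping_time_opposite} enters, yielding $w(Q)^{-1}\int_{Q} V\,dw \sim d(Q)^{-2}$ and controlling the $V^{-1}$ factor, so that this term is bounded by $C\int_{Q^{**}} V|f|^2\,dw + \tfrac12 d(Q)^{-2}\int_{Q^{**}}|f|^2\,dw$, the last term being absorbable after summing.

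\textbf{The main obstacle.} The delicate point is handling the \emph{non-locality} of the Dunkl operators: the convolution $f*\varphi_Q$ and the difference $f - f*\varphi_Q$ involve Dunkl translations $\tau_{\mathbf{x}}\varphi$, which are not supported in a neighborhood of $\mathbf{x}$ but spread over the whole $G$-orbit $\mathcal{O}(B(\mathbf{x},d(Q)))$; moreover $\tau_{\mathbf x}$ is not known to be $L^p(dw)$-bounded for $p \neq 2$. This forces one to work on $L^2(dw)$ throughout (where Plancherel~\eqref{eq:Plancherel} and the contractivity of $\tau_{\mathbf x}$ hold), to replace "cube $Q$" by its orbit-enlargement when localizing, and to check that the reverse-Hölder estimates of the previous section survive the passage to orbits — this is exactly what \eqref{eq:ball_orbit_compare} and the doubling of $\mu$ are for. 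A second subtlety is that the cube $Q$ in $\mathcal{Q}$ and the ball $B(\mathbf x, Cd(Q))$ must be interchanged freely, which Fact~\ref{fact:m_d_compare} and the reverse-Hölder inequality on cubes \eqref{eq:reverse_Holder_cubes} permit. Once the uniform local estimate is in place, summation over $\mathcal Q$ and the finite-overlap property \eqref{eq:finite_overlap} finish the proof.
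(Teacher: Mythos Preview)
Your outline follows the right template (split $f$ via Dunkl convolution with a bump at scale $d(Q)$, pseudo-Poincar\'e for the difference, $A_p$ machinery for the averaged piece), but there is a genuine gap in the localisation. You assert $\int_{Q}|f - f*\varphi_Q|^2\,dw \leq C\sum_j\int_{Q^{**}}|T_jf|^2\,dw$, claiming the right-hand side localises because $\tau_{\mathbf x}\varphi$ has essentially compact support. But the only pseudo-Poincar\'e inequality available is the \emph{global} one $\|g - g*\varphi_t\|_{L^2(dw)}^2 \leq Ct^2\sum_j\|T_jg\|_{L^2(dw)}^2$ obtained from Plancherel and Corollary~\ref{coro:Roesler}; there is no pointwise control of $f(\mathbf x)-f(\mathbf y)$ by $T_jf$, hence no local version. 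The paper's remedy is to apply the global inequality to $\phi_Q f$ (with $\{\phi_Q\}$ a smooth partition of unity subordinate to $\mathcal Q$) rather than to $f$. One must then estimate $\|T_j(\phi_Q f)\|_{L^2(dw)}$ via the Leibniz rule (Lemma~\ref{lem:general_Leibniz}), and the reflection terms force the bound to spread to $\mathcal{O}(Q^*)$; the key Lemma~\ref{lem:gradient} shows this spread is controlled by $\int_{\mathcal{O}(Q^*)}|f|^2 m^2\,dw$, using \eqref{eq:Shen_A}--\eqref{eq:Shen_C} in an essential way. Working with $\phi_Q f$ also keeps the $L^1$-estimate of the convolution piece on $Q^*$ (not on its orbit), so that the $E_\varepsilon$ split of Lemma~\ref{lem:E_set} applies directly; your version would need the $A_p$ property on $\mathcal{O}(Q^{**})$, a disjoint union of possibly distant cubes, and this is not covered by \eqref{eq:A_p} since $V$ is not assumed $G$-invariant.

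There is also a quantitative problem with your absorption step. Your remainder $\tfrac12\, d(Q)^{-2}\int_{Q^{**}}|f|^2\,dw$, summed over $\mathcal Q$, becomes roughly $\tfrac{M}{2}\int|f|^2m^2\,dw$ with $M$ the overlap constant of $\{Q^{**}\}$, and there is no reason for $M<2$. The paper introduces a free parameter $A>1$: the bump $\psi_Q^A$ lives at scale $A^{-1}d(Q)$, so the pseudo-Poincar\'e term carries a factor $A^{-2}$ and, after summing, the term to be absorbed is $CA^{-2}|G|\int|f|^2m^2\,dw$. The pointwise convolution estimate loses $A^{2\mathbf N}$, but this is compensated first by choosing the threshold $\varepsilon$ in Lemma~\ref{lem:E_set} depending on $A$; only afterwards is $A$ taken large. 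Without such a parameter your absorption fails.
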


We need some lemmas before providing the proof of Theorem~\ref{theo:Fefferman-Phong}.
\begin{lemma}\label{lem:E_set}
There are constants $C,\eta>0$ such that for all $Q \in \mathcal{Q}$ and $\varepsilon>0$ we have $w(E_{\varepsilon}) \leq C\varepsilon^{\eta}w(Q^{*})$, where
\begin{equation}\label{eq:E_def}
    E_{\varepsilon}=\{\mathbf{y} \in Q^{*}\;:\; V(\mathbf{y}) \leq \varepsilon d(Q)^{-2}\}.
\end{equation}
\end{lemma}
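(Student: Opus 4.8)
The plan is to bound $w(E_\varepsilon)$ by comparing the Lebesgue-type measure $w$ of the "small potential" set with the potential measure $\mu$, using the reverse Hölder inequality in the form~\eqref{eq:compare_cubes}. First I would observe that on $E_\varepsilon$ the potential is pointwise small, so integrating gives $\mu(E_\varepsilon)=\int_{E_\varepsilon}V\,dw \leq \varepsilon d(Q)^{-2}w(E_\varepsilon)\leq \varepsilon d(Q)^{-2}w(Q^*)$. On the other hand, since $Q\in\mathcal{Q}$ the stopping-time condition~\eqref{eq:stopping_time} together with its reverse~\eqref{eq:stopping_time_opposite} and the doubling property of $\mu$ (Lemma~\ref{lem:mu_doubling}) give $\mu(Q^*)\sim \mu(Q)\sim d(Q)^{-2}w(Q)\sim d(Q)^{-2}w(Q^*)$, again by doubling of $w$. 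Combining, $\mu(E_\varepsilon)\leq C\varepsilon\,\mu(Q^*)$.

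Next I would feed this into the quantitative $A_\infty$-type estimate~\eqref{eq:compare_cubes}: if one wants to deduce smallness of $w(E_\varepsilon)$ from smallness of $\mu(E_\varepsilon)$, one uses the implication~\eqref{eq:implication} in its contrapositive/iterated form, or more directly the fact that~\eqref{eq:A_p} says $V\in A_p(dw)$ for some $p>1$, which implies a reverse inequality: there exist $C,\eta>0$ such that for any cube $Q$ and measurable $E\subseteq Q$,
\begin{equation*}
    \frac{w(E)}{w(Q^*)} \leq C\left(\frac{\mu(E)}{\mu(Q^*)}\right)^{\eta}.
\end{equation*}
This is the standard "$A_\infty$ in the other direction" consequence of the reverse Hölder inequality for the weight $V^{-1/(p-1)}$ with respect to $\mu$, equivalently of~\eqref{eq:reverse_Holder_cubes}; it can also be bootstrapped directly from iterating~\eqref{eq:implication}. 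Applying it with $E=E_\varepsilon$ and using $\mu(E_\varepsilon)\leq C\varepsilon\mu(Q^*)$ yields $w(E_\varepsilon)\leq C\varepsilon^{\eta}w(Q^*)$, which is exactly the claim.

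The main obstacle, and the only real content, is establishing the quantitative power-type relation $w(E)/w(Q^*)\leq C(\mu(E)/\mu(Q^*))^\eta$ with a fixed exponent $\eta>0$ independent of $Q$ and $\varepsilon$. The qualitative implication~\eqref{eq:implication} only asserts that smallness of $\mu(E)$ forces $w(E)<\delta w(Q)$ for one fixed threshold; to get a genuine power decay one must iterate it along a decreasing sequence of thresholds, noting that $Q^*$ can be harmlessly absorbed into $Q$ at the cost of doubling constants. Concretely: if $\mu(E)<\gamma^{n}\mu(Q)$ then decomposing and applying~\eqref{eq:implication} $n$ times (using that $\mu$ restricted to nested pieces still controls $w$) gives $w(E)<\delta^{n}w(Q)$; choosing $n\sim \log(1/\varepsilon)$ converts the geometric decay into the power $\varepsilon^{\eta}$ with $\eta=\log\delta/\log\gamma$. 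Everything else — the pointwise bound on $E_\varepsilon$, the equivalences $\mu(Q^*)\sim d(Q)^{-2}w(Q^*)$ — is routine bookkeeping with the doubling properties already established in the previous section.
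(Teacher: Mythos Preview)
Your approach is correct and rests on the same two ingredients as the paper's proof: the $A_p$ condition~\eqref{eq:A_p} and the stopping-time estimate~\eqref{eq:stopping_time_opposite}. The paper organizes the computation slightly more directly: rather than first bounding $\mu(E_\varepsilon)\leq C\varepsilon\,\mu(Q^*)$ and then invoking a general reverse-$A_\infty$ inequality, it writes
\[
w(E_\varepsilon)^{p-1}\leq \Big(\int_{E_\varepsilon}\big(\varepsilon d(Q)^{-2}V(\mathbf y)^{-1}\big)^{1/(p-1)}\,dw(\mathbf y)\Big)^{p-1}\leq \varepsilon\,d(Q)^{-2}\Big(\int_{Q^*}V^{-1/(p-1)}\,dw\Big)^{p-1},
\]
substitutes $d(Q)^{-2}\leq C\mu(Q^*)/w(Q^*)$, and applies~\eqref{eq:A_p} in one stroke to get $\eta=1/(p-1)$. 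Your detour through $\mu(E_\varepsilon)$ is fine; the reverse inequality $w(E)/w(Q)\leq C(\mu(E)/\mu(Q))^{1/p}$ does follow from~\eqref{eq:A_p} by H\"older (write $w(E)=\int_E V^{-1}\,d\mu$ and pair with $\mu(E)^{1/p}$).

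One caveat: your sketch of obtaining the power decay by ``applying~\eqref{eq:implication} $n$ times'' is not a valid iteration as stated, since~\eqref{eq:implication} requires the ambient set to be a cube and you cannot nest $E$ inside intermediate non-cube sets. Getting a power bound from a single-threshold implication genuinely needs a Calder\'on--Zygmund decomposition (this is exactly what Proposition~\ref{propo:Grafakos} encodes). Since you already have the direct $A_p$ route, this branch is unnecessary and can be dropped.
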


\begin{proof}
Let $p>1$ be the number from~\eqref{eq:A_p}. By the definition of $E_{\varepsilon}$ we write
\begin{equation}\label{eq:E_set_computation}
\begin{split}
    w(E_{\varepsilon})^{p-1} &= \left(\int_{E_{\varepsilon}}\,dw(\mathbf{y})\right)^{p-1} \leq \left(\int_{E_{\varepsilon}}\varepsilon^{1/(p-1)}d(Q)^{-2/(p-1)}V(\mathbf{y})^{-\frac{1}{p-1}}\,dw(\mathbf{y})\right)^{p-1}\\& \leq \varepsilon d(Q)^{-2}\left(\int_{Q^{*}}V(\mathbf{y})^{-\frac{1}{p-1}}\,dw(\mathbf{y})\right)^{p-1}.
\end{split}
\end{equation}
Thanks to~\eqref{eq:stopping_time_opposite} and the doubling property of $w$ we have
\begin{equation}\label{eq:stopping_time_opposite_app}
    d(Q)^{-2} \leq C\frac{1}{w(Q)}\int_{Q}V(\mathbf{y})\,dw(\mathbf{y}) \leq C'\frac{1}{w(Q^{*})}\int_{Q^{*}}V(\mathbf{y})\,dw(\mathbf{y}) .
\end{equation}
Consequently, applying~\eqref{eq:E_set_computation} and~\eqref{eq:stopping_time_opposite_app} together with~\eqref{eq:A_p} we get
\begin{align*}
    w(E_{\varepsilon})^{p-1} \leq C\varepsilon \left(\frac{1}{w(Q^{*})}\int_{Q^{*}}V(\mathbf{y})\,dw(\mathbf{y})\right)\left(\int_{Q^{*}}V(\mathbf{y})^{-\frac{1}{p-1}}\,dw(\mathbf{y})\right)^{p-1} \leq C'\varepsilon w(Q^{*})^{p-1}.
\end{align*}
\end{proof}

\begin{lemma}\label{lem:general_Leibniz}
For all  $j \in \{1,2,\ldots,N\}$, $g \in C^\infty_{c}(\mathbb{R}^N)$, and $f \in L^2(dw)$ such that its weak Dunkl derivative $T_jf$ is in $L^2(dw)$ we have $T_j(fg) \in L^2(dw)$. Moreover,
\begin{equation}\label{eq:general_Leibniz}
    T_j(fg)(\mathbf{x})=(T_jf)(\mathbf{x})g(\mathbf{x})+f(\mathbf{x})\partial_{j}g(\mathbf{x})+\sum_{\alpha \in R}\frac{k(\alpha)}{2} \alpha_j f(\sigma_{\alpha}(\mathbf{x}))\frac{g(\mathbf{x})-g(\sigma_{\alpha}(\mathbf{x}))}{\langle \mathbf{x}, \alpha \rangle}
\end{equation}
in $L^2(dw)$-sense.
\end{lemma}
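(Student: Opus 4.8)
\emph{Overall plan.} I would first establish the pointwise Leibniz identity \eqref{eq:general_Leibniz} in the case where $f$ is replaced by an arbitrary $\phi\in C^{\infty}(\mathbb{R}^N)$, then obtain the general case by approximating $f$ by smooth functions in the graph norm of $T_j$ and passing to the limit in $L^2(dw)$. For the smooth case, from \eqref{eq:T_def} and the ordinary product rule for $\partial_j$ one splits
$$(\phi g)(\mathbf{x})-(\phi g)(\sigma_{\alpha}(\mathbf{x}))=\bigl(\phi(\mathbf{x})-\phi(\sigma_{\alpha}(\mathbf{x}))\bigr)g(\mathbf{x})+\phi(\sigma_{\alpha}(\mathbf{x}))\bigl(g(\mathbf{x})-g(\sigma_{\alpha}(\mathbf{x}))\bigr),$$
multiplies by $\tfrac{k(\alpha)}{2}\alpha_j\langle\mathbf{x},\alpha\rangle^{-1}$, sums over $\alpha\in R$, and adds $\partial_j(\phi g)=(\partial_j\phi)g+\phi\,\partial_jg$; recognizing $(\partial_j\phi)g$ together with the first group of difference quotients as $(T_j\phi)g$ gives exactly
$$T_j(\phi g)(\mathbf{x})=(T_j\phi)(\mathbf{x})g(\mathbf{x})+\phi(\mathbf{x})\partial_jg(\mathbf{x})+\sum_{\alpha\in R}\frac{k(\alpha)}{2}\alpha_j\,\phi(\sigma_{\alpha}(\mathbf{x}))\,\frac{g(\mathbf{x})-g(\sigma_{\alpha}(\mathbf{x}))}{\langle\mathbf{x},\alpha\rangle}.$$

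\emph{The coefficient functions.} For $g\in C^{\infty}_c(\mathbb{R}^N)$ and $\alpha\in R$, writing $\mathbf{x}=\mathbf{x}'+t\alpha/\|\alpha\|$ with $\mathbf{x}'\in\alpha^{\perp}$ and $t=\langle\mathbf{x},\alpha\rangle/\|\alpha\|$, the function $g(\mathbf{x})-g(\sigma_{\alpha}(\mathbf{x}))$ is odd and smooth in $t$ and vanishes at $t=0$, so $h_{\alpha}(\mathbf{x}):=\frac{g(\mathbf{x})-g(\sigma_{\alpha}(\mathbf{x}))}{\langle\mathbf{x},\alpha\rangle}$ extends to a function in $C^{\infty}_c(\mathbb{R}^N)$ with $\supp h_{\alpha}\subseteq \supp g\cup\sigma_{\alpha}(\supp g)$. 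Hence $g$, $\partial_jg$ and each $h_{\alpha}$ are bounded with compact support, and for any $u\in L^2(dw)$ with weak Dunkl derivative $T_ju\in L^2(dw)$ the three functions $(T_ju)g$, $u\,\partial_jg$ and $u(\sigma_{\alpha}(\cdot))h_{\alpha}$ all lie in $L^2(dw)$; for the last we use that $dw$ is $G$-invariant, so $\|u(\sigma_{\alpha}(\cdot))h_{\alpha}\|_{L^2(dw)}\le\|h_{\alpha}\|_{\infty}\|u\|_{L^2(dw)}$. Thus the right-hand side of \eqref{eq:general_Leibniz}, call it $\psi_f$, is a well-defined element of $L^2(dw)$, and the content of the lemma is that $fg\in\mathcal{D}(T_j)$ with $T_j(fg)=\psi_f$.

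\emph{Approximation and passage to the limit.} The maximal operator $T_j$ on $L^2(dw)$, with domain $\{u\in L^2(dw):T_ju\in L^2(dw)\}$, is the negative of the adjoint of $T_j$ restricted to $C^{\infty}_c(\mathbb{R}^N)$ (by skew-symmetry of $T_j$), hence it is a closed operator. Taking $f_n=H_{1/n}f$, the function $f_n$ is smooth (differentiation under the integral sign, using the smoothness of $h_t(\mathbf{x},\mathbf{y})$ and the Gaussian bounds of Theorem~\ref{theorem:heat}), and since on the Dunkl transform side $T_j$ and $H_{1/n}$ act by multiplication by $i\xi_j$ and $e^{-\|\xi\|^2/n}$ respectively, one has $T_jf_n=H_{1/n}T_jf$; strong continuity of $H_t$ then gives $f_n\to f$ and $T_jf_n\to T_jf$ in $L^2(dw)$. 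Applying the smooth Leibniz identity of the first paragraph with $\phi=f_n$ yields $T_j(f_ng)=\psi_{f_n}$ pointwise, and therefore $f_ng\in\mathcal{D}(T_j)$. Because $g$, $\partial_jg$, $h_{\alpha}$ are bounded and $dw$ is $G$-invariant, the bounds $\|(T_jf_n)g-(T_jf)g\|_{L^2(dw)}\le\|g\|_{\infty}\|T_jf_n-T_jf\|_{L^2(dw)}$, $\|f_n\partial_jg-f\partial_jg\|_{L^2(dw)}\le\|\partial_jg\|_{\infty}\|f_n-f\|_{L^2(dw)}$ and $\|f_n(\sigma_{\alpha}(\cdot))h_{\alpha}-f(\sigma_{\alpha}(\cdot))h_{\alpha}\|_{L^2(dw)}\le\|h_{\alpha}\|_{\infty}\|f_n-f\|_{L^2(dw)}$ show $\psi_{f_n}\to\psi_f$ in $L^2(dw)$, while $f_ng\to fg$ in $L^2(dw)$. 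Closedness of the maximal $T_j$ then forces $fg\in\mathcal{D}(T_j)$ and $T_j(fg)=\psi_f$, which is \eqref{eq:general_Leibniz}.

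\emph{Main obstacle.} The only genuinely delicate step is the claim in the third paragraph that smooth functions form a core for the maximal Dunkl operator $T_j$, i.e.\ that $f$ can be approximated together with $T_jf$; the heat-semigroup regularization $f_n=H_{1/n}f$ handles this cleanly because $H_t$ commutes with $T_j$ and maps $L^2(dw)$ into $C^{\infty}$. Everything else is routine once one observes that the difference quotient $h_{\alpha}$ belongs to $C^{\infty}_c(\mathbb{R}^N)$, so that all three terms on the right-hand side are controlled in $L^2(dw)$ by $\|f\|_{L^2(dw)}$ and $\|T_jf\|_{L^2(dw)}$.
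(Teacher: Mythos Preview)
Your proof is correct but takes a genuinely different route from the paper's. Both begin the same way, establishing the pointwise Leibniz identity for smooth (or $C^1$) $f$ by splitting the difference quotient as you do. For the general case, however, the paper proceeds by duality: for a test function $\psi\in C^\infty_c(\mathbb{R}^N)$ it writes $\int T_j(fg)\,\psi\,dw=-\int f\,g\,T_j\psi\,dw$, applies the smooth Leibniz identity to the pair $(\psi,g)$ to express $g\,T_j\psi$ in terms of $T_j(g\psi)$, $\psi\,\partial_jg$ and the difference-quotient terms, and then uses the weak definition of $T_jf$ on $-\int f\,T_j(g\psi)\,dw$ together with the change of variables $\mathbf{x}\mapsto\sigma_\alpha(\mathbf{x})$ in the reflection terms. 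This yields \eqref{eq:general_Leibniz} tested against $\psi$ in a single line, with no approximation needed.

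Your approach instead regularizes $f$ via $f_n=H_{1/n}f$, applies the smooth identity to $(f_n,g)$, and passes to the limit using closedness of the maximal $T_j$. This is perfectly valid and perhaps conceptually natural, but it requires three auxiliary facts the paper's argument avoids entirely: that $H_{1/n}f$ is classically $C^1$ (which you sketch via the smoothness of $h_t(\mathbf{x},\mathbf{y})$ but do not fully justify---one really needs bounds on $\partial_{\mathbf{x}}h_t(\mathbf{x},\mathbf{y})$, not just the Gaussian bound on $h_t$ itself), that $T_j$ commutes with $H_{1/n}$ (which you handle cleanly on the Fourier side), and that the maximal $T_j$ is closed. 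The paper's duality argument is shorter and sidesteps these points, at the cost of a slightly less transparent computation; your approximation argument is more hands-on and makes explicit why smooth functions form a core for $T_j$, which may be useful elsewhere.
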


\begin{proof}
It is a standard fact, but for the convenience of reader we provide the proof. Let us assume first that $f \in C^1(\mathbb{R}^N)$. By the definition of $T_j$ (see~\eqref{eq:T_def}) we have
\begin{equation}\label{eq:computation_C1}
\begin{split}
    &T_j(fg)(\mathbf{x})=\partial_{j}(fg)(\mathbf{x})+\sum_{\alpha \in R}\frac{k(\alpha)}{2}\alpha_j \frac{f(\mathbf{x})g(\mathbf{x})-f(\sigma_{\alpha}(\mathbf{x}))g(\sigma_{\alpha}(\mathbf{x}))}{\langle \mathbf{x}, \alpha \rangle}\\&=f(\mathbf{x})(\partial_{j}g)(\mathbf{x})+(\partial_{j}f)(\mathbf{x})g(\mathbf{x})+\sum_{\alpha \in R}\frac{k(\alpha)}{2} \alpha_j g(\mathbf{x})\frac{f(\mathbf{x})-f(\sigma_{\alpha}(\mathbf{x}))}{\langle \mathbf{x}, \alpha \rangle}\\&+\sum_{\alpha \in R}\frac{k(\alpha)}{2} \alpha_j f(\sigma_{\alpha}(\mathbf{x}))\frac{g(\mathbf{x})-g(\sigma_{\alpha}(\mathbf{x}))}{\langle \mathbf{x}, \alpha \rangle}\\&=f(\mathbf{x})\partial_{j}g(\mathbf{x})+(T_jf)(\mathbf{x})g(\mathbf{x})+\sum_{\alpha \in R}\frac{k(\alpha)}{2} \alpha_j f(\sigma_{\alpha}(\mathbf{x}))\frac{g(\mathbf{x})-g(\sigma_{\alpha}(\mathbf{x}))}{\langle \mathbf{x}, \alpha \rangle}.
\end{split}
\end{equation}
In order to obtain the  general case, let us take $\psi \in C^{\infty}_{c}(\mathbb{R}^N)$. By the definition of $T_j(fg)$ and~\eqref{eq:computation_C1} we have
\begin{align*}
    &\int_{\mathbb{R}^N} T_{j}(fg)(\mathbf{x})\psi(\mathbf{x})\,dw(\mathbf{x})=-\int_{\mathbb{R}^N} f(\mathbf{x})g(\mathbf{x})T_j\psi(\mathbf{x})\,dw(\mathbf{x})\\&=-\int_{\mathbb{R}^N}f(\mathbf{x})T_{j}(g\psi)(\mathbf{x})+\int_{\mathbb{R}^N}f(\mathbf{x})\partial_{j}g(\mathbf{x})\psi(\mathbf{x})\,dw(\mathbf{x})\\&+\sum_{\alpha \in R}\frac{k(\alpha)}{2}\alpha_j\int_{\mathbb{R}^N}f(\mathbf{x})\psi(\sigma_{\alpha}(\mathbf{x}))\frac{g(\mathbf{x})-g(\sigma_{\alpha}(\mathbf{x}))}{\langle \mathbf{x},\alpha \rangle}\,dw(\mathbf{x})\\&=\int_{\mathbb{R}^N}T_jf(\mathbf{x})g(\mathbf{x})\psi(\mathbf{x})\,dw(\mathbf{x})+\int_{\mathbb{R}^N}f(\mathbf{x})\partial_{j}g(\mathbf{x})\psi(\mathbf{x})\,dw(\mathbf{x})\\&+\sum_{\alpha \in R}\frac{k(\alpha)}{2}\alpha_j\int_{\mathbb{R}^N}f(\sigma_{\alpha}(\mathbf{x}))\frac{g(\mathbf{x})-g(\sigma_{\alpha}(\mathbf{x}))}{\langle \mathbf{x},\alpha \rangle}\psi(\mathbf{x})\,dw(\mathbf{x}).
\end{align*}
\end{proof}

Let $\{\phi_{Q}\}_{Q \in \mathcal{Q}}$ be a smooth resolution of identity associated with $\mathcal{Q}$, that means the collection of $C^{\infty}$-functions on $\mathbb{R}^N$ such that $\supp \phi_{Q} \subseteq Q^{*}$, $0 \leq \phi_Q(\mathbf{x}) \leq 1$,
\begin{equation}\label{eq:partition_derivative}
    |\partial^{\alpha}\phi_{Q}(\mathbf{x})| \leq C_{\alpha}d(Q)^{-|\alpha|} \text{ for all }\alpha \in \mathbb{N}_0^{N},
\end{equation}
and $\sum_{Q \in \mathcal{Q}}\phi_Q(\mathbf{x})=1$ for all $\mathbf{x} \in \mathbb{R}^N$. The existence of $\{\phi_{Q}\}_{Q \in \mathcal{Q}}$ is guaranteed by~\eqref{eq:finite_overlap} (see Fact~\ref{fact:F}).

\begin{lemma}\label{lem:difference_by_derivative}
There is a constant $C>0$ such that for all $\alpha \in R$, $Q \in \mathcal{Q}$, and $\mathbf{x} \in Q^{*}$ we have
\begin{align*}
    \left|\frac{\phi(\mathbf{x})-\phi(\sigma_{\alpha}(\mathbf{x}))}{\langle \mathbf{x}, \alpha \rangle}\right| \leq Cd(Q)^{-1}.
\end{align*}
\end{lemma}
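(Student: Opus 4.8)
The plan is to reduce the quantity $\frac{\phi_Q(\mathbf{x})-\phi_Q(\sigma_\alpha(\mathbf{x}))}{\langle\mathbf{x},\alpha\rangle}$ to a first-order object controlled by $\|\nabla\phi_Q\|_\infty$, and then invoke the size estimate \eqref{eq:partition_derivative} for $|\alpha|=1$, which gives $\|\nabla\phi_Q\|_\infty \le Cd(Q)^{-1}$. First I would write $\sigma_\alpha(\mathbf{x}) = \mathbf{x} - \langle\mathbf{x},\alpha\rangle\alpha$ (using $\|\alpha\|^2 = 2$, so $2\langle\mathbf{x},\alpha\rangle/\|\alpha\|^2 = \langle\mathbf{x},\alpha\rangle$), so that $\mathbf{x}$ and $\sigma_\alpha(\mathbf{x})$ are joined by the straight segment $t \mapsto \mathbf{x} - t\langle\mathbf{x},\alpha\rangle\alpha$, $t\in[0,1]$. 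By the fundamental theorem of calculus along this segment,
\begin{equation*}
    \phi_Q(\mathbf{x}) - \phi_Q(\sigma_\alpha(\mathbf{x})) = \langle\mathbf{x},\alpha\rangle \int_0^1 \langle \nabla\phi_Q(\mathbf{x} - t\langle\mathbf{x},\alpha\rangle\alpha), \alpha\rangle\, dt,
\end{equation*}
so the factor $\langle\mathbf{x},\alpha\rangle$ cancels and
\begin{equation*}
    \left|\frac{\phi_Q(\mathbf{x})-\phi_Q(\sigma_\alpha(\mathbf{x}))}{\langle\mathbf{x},\alpha\rangle}\right| \le \|\alpha\|\,\sup_{t\in[0,1]}\|\nabla\phi_Q(\mathbf{x}-t\langle\mathbf{x},\alpha\rangle\alpha)\| = \sqrt{2}\,\sup_{t\in[0,1]}\|\nabla\phi_Q(\mathbf{x}-t\langle\mathbf{x},\alpha\rangle\alpha)\|.
\end{equation*}

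It then remains to bound the right-hand side by $Cd(Q)^{-1}$. By \eqref{eq:partition_derivative} applied to each first-order partial derivative we have $\|\nabla\phi_Q(\mathbf{y})\| \le C d(Q)^{-1}$ for every $\mathbf{y}\in\mathbb{R}^N$ (with a constant depending only on $N$), which already suffices; one does not even need to track where the segment lies. (If one prefers, one can note that for $\mathbf{x}\in Q^*$ the whole segment stays in $\mathcal{O}(Q^*)$ by reflection-invariance of the construction, but this is not needed since the bound \eqref{eq:partition_derivative} is global.) Combining the two displays gives the claim with $C = \sqrt{2}\,C_{(1,0,\dots,0)}$ (or the maximum of the first-order constants $C_\alpha$).

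I do not expect a genuine obstacle here: the only point requiring a little care is the identity $\sigma_\alpha(\mathbf{x}) = \mathbf{x} - \langle\mathbf{x},\alpha\rangle\alpha$ under the normalization $\|\alpha\| = \sqrt{2}$, and the observation that the difference quotient is exactly a directional-derivative average so that the potentially singular denominator $\langle\mathbf{x},\alpha\rangle$ disappears identically. The case $\langle\mathbf{x},\alpha\rangle = 0$ is covered by continuity (the quotient extends to $\langle\nabla\phi_Q(\mathbf{x}),\alpha\rangle$), or simply by the convention that both sides of the displayed inequality are then to be read as the limit. Everything else is the already-available estimate \eqref{eq:partition_derivative}.
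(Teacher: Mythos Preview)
Your proof is correct and is essentially the same as the paper's: both write $\phi_Q(\mathbf{x})-\phi_Q(\sigma_\alpha(\mathbf{x}))$ as an integral of the directional derivative along the segment from $\mathbf{x}$ to $\sigma_\alpha(\mathbf{x})$, cancel the factor $\langle\mathbf{x},\alpha\rangle$, and then invoke \eqref{eq:partition_derivative}. Your version is slightly more explicit about the normalization $\|\alpha\|^2=2$ and the degenerate case $\langle\mathbf{x},\alpha\rangle=0$, but the argument is identical.
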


\begin{proof}
This is the standard fact - we write
\begin{align*}
    \frac{\phi(\mathbf{x})-\phi(\sigma_{\alpha}(\mathbf{x}))}{\langle \mathbf{x}, \alpha \rangle}&=-\frac{1}{\langle \mathbf{x},\alpha \rangle}\int_0^{1}\frac{d}{dt}\phi\left(\mathbf{x}-2t\frac{\langle \mathbf{x},\alpha\rangle }{\|\alpha\|^2}\alpha\right)\,dt\\&=\int_{0}^{1}\left\langle (\nabla_{\mathbf{x}}\phi)\left(\mathbf{x}-2t\frac{\langle \mathbf{x},\alpha\rangle }{\|\alpha\|^2}\alpha \right),\alpha \right\rangle \,dt,
\end{align*}
so the claim is a consequence of~\eqref{eq:partition_derivative}.
\end{proof}

\begin{lemma}\label{lem:gradient}
There is a constant $C>0$ such that for all $j\in \{1,\ldots,N\}$, $f \in L^2(dw)$ such that its weak Dunkl derivative $T_jf$ is in $L^2(dw)$, and $Q \in \mathcal{Q}$ we have
\begin{align*}
    \|T_j(f\phi_{Q})\|_{L^2(dw)} \leq C\left(\left(\int_{Q^{*}}|T_jf(\mathbf{x})|^2\,dw(\mathbf{x})\right)^{1/2}+\left(\int_{\mathcal{O}(Q^{*})}|f(\mathbf{x})|^2m(\mathbf{x})^2\,dw(\mathbf{x})\right)^{1/2}\right)
\end{align*}
(let us remind that $\mathcal{O}(Q^{*})$ denotes the orbit of cube $Q^{*}$, see~\eqref{eq:orbit_of_A}).
\end{lemma}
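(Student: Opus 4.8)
The estimate follows from the Leibniz-type formula~\eqref{eq:general_Leibniz} together with the size bounds for $\phi_Q$ and its differences, and the comparison $m(\mathbf{x}) \sim d(Q)^{-1}$ on $Q^{****}$ from Fact~\ref{fact:m_d_compare}. Concretely, I would apply Lemma~\ref{lem:general_Leibniz} with $g = \phi_Q$ and bound the three terms on the right-hand side of~\eqref{eq:general_Leibniz} in $L^2(dw)$ separately.

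\textbf{First term.} Since $0 \leq \phi_Q \leq 1$ and $\supp \phi_Q \subseteq Q^{*}$, we have
\begin{align*}
    \|(T_jf)\,\phi_Q\|_{L^2(dw)}^2 \leq \int_{Q^{*}}|T_jf(\mathbf{x})|^2\,dw(\mathbf{x}),
\end{align*}
which is exactly the first term on the right-hand side of the asserted inequality.

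\textbf{Second term.} By~\eqref{eq:partition_derivative} with $|\alpha|=1$, $|\partial_j \phi_Q(\mathbf{x})| \leq C d(Q)^{-1}$, and this is supported in $Q^{*} \subseteq Q^{****}$; on $Q^{*}$ we have $d(Q)^{-1} \leq C m(\mathbf{x})$ by~\eqref{eq:m_on_cube}. Hence
\begin{align*}
    \|f\,\partial_j \phi_Q\|_{L^2(dw)}^2 \leq C \int_{Q^{*}}|f(\mathbf{x})|^2 d(Q)^{-2}\,dw(\mathbf{x}) \leq C' \int_{Q^{*}}|f(\mathbf{x})|^2 m(\mathbf{x})^2\,dw(\mathbf{x}),
\end{align*}
and $Q^{*} \subseteq \mathcal{O}(Q^{*})$ gives the claimed bound.

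\textbf{Third term (the main point).} For the reflection part $\sum_{\alpha \in R}\frac{k(\alpha)}{2}\alpha_j f(\sigma_\alpha(\mathbf{x}))\frac{\phi_Q(\mathbf{x})-\phi_Q(\sigma_\alpha(\mathbf{x}))}{\langle \mathbf{x},\alpha\rangle}$, I first note that the difference quotient is nonzero only when $\mathbf{x} \in Q^{*}$ or $\sigma_\alpha(\mathbf{x}) \in Q^{*}$, i.e. $\mathbf{x} \in Q^{*} \cup \sigma_\alpha(Q^{*})$, and on this set Lemma~\ref{lem:difference_by_derivative} (applied to $\phi_Q$, noting reflection invariance of the estimate since $\sigma_\alpha(Q^{*})$ is again contained in a bounded dilate) gives $\bigl|\frac{\phi_Q(\mathbf{x})-\phi_Q(\sigma_\alpha(\mathbf{x}))}{\langle \mathbf{x},\alpha\rangle}\bigr| \leq C d(Q)^{-1}$. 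Using $|\alpha_j| \leq \|\alpha\| = \sqrt 2$ and that $R$ is finite, the $L^2(dw)$-norm of this term is bounded by $C d(Q)^{-1}\sum_{\alpha \in R}\bigl(\int_{Q^{*}\cup\sigma_\alpha(Q^{*})}|f(\sigma_\alpha(\mathbf{x}))|^2\,dw(\mathbf{x})\bigr)^{1/2}$. Now I change variables $\mathbf{x} \mapsto \sigma_\alpha(\mathbf{x})$, which preserves $dw$ (the measure is $G$-invariant), transforming the integration domain to $\sigma_\alpha(Q^{*}) \cup Q^{*} \subseteq \mathcal{O}(Q^{*})$; thus each summand is bounded by $C d(Q)^{-1}\bigl(\int_{\mathcal{O}(Q^{*})}|f(\mathbf{x})|^2\,dw(\mathbf{x})\bigr)^{1/2}$. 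Finally, since $\mathcal{O}(Q^{*}) \subseteq \bigcup_{\alpha}\sigma_\alpha(Q^{*}) \cup Q^{*}$ and each point of $\mathcal{O}(Q^{*})$ lies within distance $\lesssim d(Q)$ of $Q$ (hence in $Q^{****}$, after translating to the correct dyadic copy—or more carefully, $d(Q)^{-1} \sim m(\sigma_\alpha(\mathbf{x}))$ by~\eqref{eq:Shen_A} of Lemma~\ref{lem:m_growth} since $\|\mathbf{x}-\sigma_\alpha(\mathbf{x})\| \lesssim d(Q) \sim m(\mathbf{x})^{-1}$ for $\mathbf{x} \in Q^{****}$), I replace $d(Q)^{-1}$ by $C m(\mathbf{x})$ inside the integral over $\mathcal{O}(Q^{*})$, obtaining the bound $C\bigl(\int_{\mathcal{O}(Q^{*})}|f(\mathbf{x})|^2 m(\mathbf{x})^2\,dw(\mathbf{x})\bigr)^{1/2}$.

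\textbf{Conclusion.} Adding the three estimates and using $\|T_j(f\phi_Q)\|_{L^2(dw)} \leq \|(T_jf)\phi_Q\|_{L^2(dw)} + \|f\partial_j\phi_Q\|_{L^2(dw)} + \|(\text{reflection term})\|_{L^2(dw)}$ (which also confirms $T_j(f\phi_Q) \in L^2(dw)$, already known from Lemma~\ref{lem:general_Leibniz}) yields the asserted inequality. The delicate step is the third term: one must correctly identify the support of the difference quotient as $Q^{*}\cup\sigma_\alpha(Q^{*})$, exploit $G$-invariance of $dw$ under the change of variables $\mathbf{x}\mapsto\sigma_\alpha(\mathbf{x})$ to move the reflected argument of $f$ back to an unreflected one at the cost of enlarging the domain to the orbit $\mathcal{O}(Q^{*})$, and then justify that $d(Q)^{-1} \lesssim m(\mathbf{x})$ remains valid on all of $\mathcal{O}(Q^{*})$ via the growth estimate~\eqref{eq:Shen_A}, since $\mathcal{O}(Q^{*})$ is contained in a bounded dilate of $Q$ whose side-length is comparable to $m(\mathbf{x})^{-1}$.
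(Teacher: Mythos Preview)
Your treatment of the first two terms is correct and matches the paper. The gap is in the third term, specifically in the claim that $\|\mathbf{x}-\sigma_\alpha(\mathbf{x})\| \lesssim d(Q)$ for $\mathbf{x} \in Q^{*}$ (equivalently, that $\mathcal{O}(Q^{*})$ is contained in a bounded dilate of $Q$). This is false in general: $\|\mathbf{x}-\sigma_\alpha(\mathbf{x})\| = \sqrt{2}\,|\langle\mathbf{x},\alpha\rangle|$, and if the cube $Q$ lies far from the hyperplane $\alpha^\perp$ then $|\langle\mathbf{x},\alpha\rangle|$ can be arbitrarily large compared to $d(Q)$. Consequently you cannot invoke~\eqref{eq:Shen_A} to conclude $d(Q)^{-1} \lesssim m(\mathbf{x})$ uniformly on $\mathcal{O}(Q^{*})$; for points $\mathbf{x}\in\sigma_\alpha(Q^{*})$ that are far from $Q^{*}$, \eqref{eq:Shen_C} only gives a lower bound on $m(\mathbf{x})$ that decays as a negative power of the distance, which is not enough to absorb the factor $d(Q)^{-2}$.

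The paper repairs this by splitting the integration domain according to the size of $|\langle\mathbf{x},\alpha\rangle|$ relative to $m(\mathbf{x})^{-1}$. On the set $\{\sqrt{2}\,|\langle\mathbf{x},\alpha\rangle| \leq m(\mathbf{x})^{-1}\}$ your argument does work: Lemma~\ref{lem:difference_by_derivative} gives the $d(Q)^{-1}$ bound, and since then $\|\mathbf{x}-\sigma_\alpha(\mathbf{x})\| \leq m(\mathbf{x})^{-1}$, \eqref{eq:Shen_A} transfers $d(Q)^{-1}$ to $m(\sigma_\alpha(\mathbf{x}))$. On the complementary set $\{\sqrt{2}\,|\langle\mathbf{x},\alpha\rangle| > m(\mathbf{x})^{-1}\}$ one abandons the mean-value bound and uses instead the crude estimate $|\phi_Q(\mathbf{x})-\phi_Q(\sigma_\alpha(\mathbf{x}))| \leq 2$, so that the integrand is controlled by $|f(\sigma_\alpha(\mathbf{x}))|^2\,|\langle\mathbf{x},\alpha\rangle|^{-2}$; then \eqref{eq:Shen_C} yields $|\langle\mathbf{x},\alpha\rangle|^{-1} \lesssim m(\sigma_\alpha(\mathbf{x}))$ on this set, and the change of variables finishes the job. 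The dichotomy is essential precisely because the orbit can be spread out.
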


\begin{proof}
By Lemma~\ref{lem:general_Leibniz} we have
\begin{align*}
    \|T_j(f\phi_{Q})\|_{L^2(dw)} &\leq \|(T_jf)(\mathbf{x})\phi_{Q}(\mathbf{x})\|_{L^2(dw(\mathbf{x}))}+\|f(\mathbf{x})(\partial_{j}\phi_{Q})(\mathbf{x})\|_{L^2(dw(\mathbf{x}))}\\&+C\sum_{\alpha \in R}\|f(\sigma_{\alpha}(\mathbf{x}))\frac{\phi_{Q}(\mathbf{x})-\phi_{Q}(\sigma_{\alpha}(\mathbf{x}))}{\langle \mathbf{x},\alpha\rangle}\|_{L^2(dw(\mathbf{x}))}.
\end{align*}
Thanks to the property that $\supp \phi_{Q} \subseteq Q^{*}$,~\eqref{eq:partition_derivative}, and Fact~\ref{fact:m_d_compare} we have
\begin{align*}
    \|(T_jf)(\mathbf{x})\phi_{Q}(\mathbf{x})\|_{L^2(dw(\mathbf{x}))} \leq C\left(\int_{Q^{*}}|T_jf(\mathbf{x})|^2\,dw(\mathbf{x})\right)^{1/2},
\end{align*}
\begin{align*}
    \|f(\mathbf{x})(\partial_{j}\phi_{Q})(\mathbf{x})\|_{L^2(dw(\mathbf{x}))} \leq C\left(\int_{Q^{*}}|f(\mathbf{x})|^2 m(\mathbf{x})^2\,dw(\mathbf{x})\right)^{1/2}.
\end{align*}
Therefore, it is enough to estimate
\begin{align*}
    &\int_{\mathcal{O}(Q^{*})}\left|f(\sigma_{\alpha}(\mathbf{x}))\frac{\phi_{Q}(\mathbf{x})-\phi_{Q}(\sigma_{\alpha}(\mathbf{x}))}{\langle \mathbf{x},\alpha\rangle}\right|^2\,dw(\mathbf{x})\\&=\int_{\mathcal{O}(Q^{*}) \cap \{\mathbf{x}\;:\;\sqrt{2}|\langle \mathbf{x},\alpha \rangle|\leq m(\mathbf{x})^{-1}\}}\ldots+\int_{\mathcal{O}(Q^{*}) \cap \{\mathbf{x}\;:\;\sqrt{2}|\langle \mathbf{x},\alpha \rangle| > m(\mathbf{x})^{-1}\}}\ldots=:I_1+I_2
\end{align*}
for fixed $\alpha \in R$. We consider $I_1$ first. Let us denote 
\begin{equation}\label{eq:non-zero}
    E=\mathcal{O}(Q^{*}) \cap \{\mathbf{x}\;:\;\sqrt{2}|\langle \mathbf{x},\alpha \rangle|\leq m(\mathbf{x})^{-1}\} \cap\{\mathbf{x}\;:\;\frac{\phi_{Q}(\mathbf{x})-\phi_{Q}(\sigma_{\alpha}(\mathbf{x}))}{\langle \mathbf{x},\alpha\rangle} \neq 0\}.
\end{equation}
If $\mathbf{x} \in E$, then either $\mathbf{x} \in Q^{*}$ or $\sigma_{\alpha}(\mathbf{x}) \in Q^{*}$, so, by Fact~\ref{fact:m_d_compare}, $d(Q)^{-1} \leq Cm(\mathbf{x})$ or $d(Q)^{-1} \leq Cm(\sigma_{\alpha}(\mathbf{x}))$ respectively. Note that
\begin{align*}
    \mathcal{O}(Q^{*}) \cap \{\mathbf{x}\;:\;\sqrt{2}|\langle \mathbf{x},\alpha \rangle|\leq m(\mathbf{x})^{-1}\}=\mathcal{O}(Q^{*}) \cap \{\mathbf{x}\;:\;\| \mathbf{x}-\sigma_{\alpha}(\mathbf{x}) \|\leq m(\mathbf{x})^{-1}\}, 
\end{align*}
so, by~\eqref{eq:Shen_A} we have 
\begin{align*}
    d(Q)^{-1} \leq C \max(m(\mathbf{x}),m(\sigma_{\alpha}(\mathbf{x}))) \leq C'm(\sigma_{\alpha}(\mathbf{x})) \text { for all }\mathbf{x} \in E. 
\end{align*}
Consequently, by Lemma~\ref{lem:difference_by_derivative}, we get
\begin{align*}
    I_1 &\leq \int_{E} \left|f(\sigma_{\alpha}(\mathbf{x}))\frac{\phi_{Q}(\mathbf{x})-\phi_{Q}(\sigma_{\alpha}(\mathbf{x}))}{\langle \mathbf{x},\alpha\rangle}\right|^2\,dw(\mathbf{x}) \leq C\int_{E}|f(\sigma_{\alpha}(\mathbf{x}))|^2d(Q)^{-2}\,dw(\mathbf{x}) \\&\leq C' \int_{\mathcal{O}(Q^{*})}|f(\sigma_{\alpha}(\mathbf{x}))|^2m(\sigma_{\alpha}(\mathbf{x}))^2\,dw(\mathbf{x})=C'\int_{\mathcal{O}(Q^{*})}|f(\mathbf{x})|^{2}m(\mathbf{x})^2\,dw(\mathbf{x}).
\end{align*}
In order to estimate $I_2$, thanks to property $0 \leq |\phi_{Q}(\mathbf{x})-\phi_{Q}(\sigma_{\alpha}(\mathbf{x}))| \leq 2$, we write
\begin{align*}
    I_2 \leq 4\int_{\mathcal{O}(Q^{*}) \cap \{\mathbf{x}\;:\;\sqrt{2}|\langle \mathbf{x},\alpha \rangle| > m(\mathbf{x})^{-1}\}}|f(\sigma_{\alpha}(\mathbf{x}))|^2|\langle \mathbf{x}, \alpha \rangle|^{-2}\,dw(\mathbf{x}).
\end{align*}
Note that, thanks to~\eqref{eq:Shen_C}, for $\mathbf{x} \in \mathcal{O}(Q^{*})$ such that $\sqrt{2}|\langle \mathbf{x},\alpha \rangle |=\|\mathbf{x}-\sigma_{\alpha}(\mathbf{x})\|>m(\mathbf{x})^{-1}$ we have
\begin{align*}
    m(\mathbf{x}) \leq Cm(\sigma_{\alpha}(\mathbf{x}))\left(1+m(\mathbf{x})\|\mathbf{x}-\sigma_{\alpha}(\mathbf{x})\|\right)^{\frac{\kappa}{1+\kappa}} \leq C'm(\sigma_{\alpha}(\mathbf{x}))m(\mathbf{x})|\langle \mathbf{x}, \alpha \rangle|,
\end{align*}
which lead us to
\begin{align*}
    I_2 \leq C\int_{\mathcal{O}(Q^{*})}|f(\sigma_{\alpha}(\mathbf{x}))|^2 m(\sigma_{\alpha}(\mathbf{x}))^2|\langle \mathbf{x},\alpha\rangle|^{2}|\langle \mathbf{x},\alpha\rangle|^{-2}\,dw(\mathbf{x})=C\int_{\mathcal{O}(Q^{*})}|f(\mathbf{x})|^{2}m(\mathbf{x})^2\,dw(\mathbf{x}),
\end{align*}
which ends the proof.
\end{proof}

\begin{proof}[Proof of Theorem~\ref{theo:Fefferman-Phong}]
Suppose first that
\begin{equation}\label{eq:a_priori}
    \int_{\mathbb{R}^N}|f(\mathbf{x})|^2m(\mathbf{x})^2\,dw(\mathbf{x})<\infty.
\end{equation}
Let $\psi \in C^{\infty}_c(\mathbb{R}^N)$ be a radial non-negative function such
that $\int_{\mathbb{R}^N}\psi\,dw=1$ and $\supp \psi \subseteq B(0,1)$, and let $A>1$ be a large constant (it will be chosen later). For $Q \in \mathcal{Q}$ we define the following scaled version of $\psi$:
\begin{align*}
    \psi_{Q}^{A}(\mathbf{x})=(A^{-1}d(Q))^{-\mathbf{N}}\psi(Ad(Q)^{-1}\mathbf{x}).
\end{align*}
It follows by Corollary~\ref{coro:Roesler} that
\begin{align*}
    |\mathcal{F}\psi(\xi)-1| \leq C\|\xi\|,
\end{align*}
consequently, by Plancherel's theorem (see~\eqref{eq:Plancherel}) and Lemma~\ref{lem:gradient},
\begin{equation}\label{eq:diff_final}
\begin{split}
    &\int_{Q^{*}}|\psi_Q^{A}*(\phi_Qf)(\mathbf{x})-(\phi_{Q}f)(\mathbf{x})|^2\,dw(\mathbf{x}) \leq CA^{-2}d(Q)^{2}\sum_{j=1}^{N}\int_{\mathcal{O}(Q^{*})}|T_j(\phi_{Q}f)(\mathbf{x})|^2 \,dw(\mathbf{x}) \\&\leq C'A^{-2}d(Q)^2\left(\sum_{j=1}^{N}\int_{Q^{*}}|T_jf(\mathbf{x})|^2\,dw(\mathbf{x})+\int_{\mathcal{O}(Q^{*})}|f(\mathbf{x})|^2m(\mathbf{x})^2\,dw(\mathbf{x})\right).
\end{split}
\end{equation}
The first inequality in \eqref{eq:diff_final} can be thought as a counterpart of the Poincar\'e inequality (cf. \eqref{eq:Poincare}). Furthermore, by Lemma~\ref{lem:nonradial_estimation} and the fact that by the doubling property of $w$ we have $w(B(\mathbf{x},d(Q))) \sim w(Q)$ for all $\mathbf{x} \in Q^{*}$, we obtain
\begin{equation}\label{eq:psi_phi_conv}
\begin{split}
    &\int_{Q^{*}}|\psi_{Q}^{A}*(\phi_{Q}f)(\mathbf{x})|^2\,dw(\mathbf{x})=\int_{Q^{*}}\left|\int_{Q^{*}}\tau_{\mathbf{x}}\psi_Q^{A}(-\mathbf{y})(\phi_{Q}f)(\mathbf{y})\,dw(\mathbf{y})\right|^2\,dw(\mathbf{x}) \\&\leq C \int_{Q^{*}}\frac{w(B(\mathbf{x},d(Q)))^2}{w(B(\mathbf{x},A^{-1}d(Q)))^2}\frac{1}{w(B(\mathbf{x},d(Q)))^2}\,dw(\mathbf{x})\|\phi_{Q}f\|^2_{L^1(dw)}\leq CA^{2\mathbf{N}}\frac{1}{w(Q^{*})}\|\phi_Q f\|^2_{L^1(dw)}.
\end{split}
\end{equation}
Let $\varepsilon>0$ (it will be chosen later) and let $E_{\varepsilon}$ be defined as in~\eqref{eq:E_def}. We write
\begin{equation}\label{eq:E_split}
    \frac{A^{2\mathbf{N}}}{w(Q^{*})}\|\phi_Q f\|^2_{L^1(dw)}=\frac{A^{2\mathbf{N}}}{w(Q^{*})}\|\phi_Q f\|^2_{L^1(E_{\varepsilon},dw)}+\frac{A^{2\mathbf{N}}}{w(Q^{*})}\|\phi_Q f\|^2_{L^1(Q^{*}\setminus E_{\varepsilon},dw)}.
\end{equation}
By the Cauchy--Schwarz inequality and Lemma~\ref{lem:E_set} we have
\begin{equation}\label{eq:E_1}
    A^{2\mathbf{N}}\frac{1}{w(Q^{*})}\|\phi_Q f\|^2_{L^1(E_{\varepsilon},dw)} \leq CA^{2\mathbf{N}}\varepsilon^{\eta}\|\phi_{Q}f\|_{L^2(dw)}^2.
\end{equation}
Next, by the definition of $E_{\varepsilon}$ (see~\eqref{eq:E_def}) and Cauchy--Schwarz inequality we get
\begin{equation}\label{eq:E_2}
    A^{2\mathbf{N}}\frac{1}{w(Q^{*})}\|\phi_Q f\|^2_{L^1(Q^{*}\setminus E_{\varepsilon},dw)} \leq CA^{2\mathbf{N}}d(Q)^{2}\varepsilon^{-1}\int_{Q^{*}}V(\mathbf{x})|(\phi_{Q}f)(\mathbf{x})|^2\,dw(\mathbf{x}).
\end{equation}
Combining~\eqref{eq:psi_phi_conv},~\eqref{eq:E_split},~\eqref{eq:E_1}, and~\eqref{eq:E_2} we get
\begin{equation}\label{eq:psi_phi_final}
    \int_{Q^{*}}|\psi_{Q}^{A}*(\phi_{Q}f)(\mathbf{x})|^2\,dw(\mathbf{x}) \leq CA^{2\mathbf{N}}\left(\varepsilon^{\eta}\|\phi_{Q}f\|_{L^2(dw)}^2+\frac{d(Q)^2}{\varepsilon}\int_{Q^{*}}V(\mathbf{x})|(\phi_{Q}f)(\mathbf{x})|^2\,dw(\mathbf{x})\right).
\end{equation}
Consequently, by~\eqref{eq:diff_final} and~\eqref{eq:psi_phi_final} we get
\begin{align*}
    \|\phi_{Q}f\|_{L^2(dw)}^2 &\leq CA^{-2}d(Q)^2\left(\sum_{j=1}^{N}\int_{Q^{*}}|T_jf(\mathbf{x})|^2\,dw(\mathbf{x})+\int_{\mathcal{O}(Q^{*})}|f(\mathbf{x})|^2m(\mathbf{x})^2\,dw(\mathbf{x})\right)\\&+CA^{2\mathbf{N}}\left(\varepsilon^{\eta}\|\phi_{Q}f\|_{L^2(dw)}^2+d(Q)^2\varepsilon^{-1}\int_{Q^{*}}V(\mathbf{x})|(\phi_{Q}f)(\mathbf{x})|^2\,dw(\mathbf{x})\right),
\end{align*}
which for $\varepsilon=\left(\frac{1}{2}C^{-1}A^{-2\mathbf{N}}\right)^{1/\eta}$ lead us to
\begin{equation}
    \begin{split}
         \|\phi_{Q}f\|_{L^2(dw)}^2 &\leq C'A^{-2}d(Q)^2\left(\sum_{j=1}^{N}\int_{Q^{*}}|T_jf(\mathbf{x})|^2\,dw(\mathbf{x})+\int_{\mathcal{O}(Q^{*})}|f(\mathbf{x})|^2m(\mathbf{x})^2\,dw(\mathbf{x})\right)\\&+C'A^{2\mathbf{N}}d(Q)^{2}\varepsilon^{-1}\int_{Q^{*}}V(\mathbf{x})|(\phi_{Q}f)(\mathbf{x})|^2\,dw(\mathbf{x}).
    \end{split}
\end{equation}
If we divide both sides by $d(Q)^{2}$ and then use Fact~\ref{fact:m_d_compare}, we get
\begin{equation}\label{eq:before_summing}
    \begin{split}
         &\int_{Q^{*}}|(\phi_{Q}f)(\mathbf{x})|^2m(\mathbf{x})^2\,dw(\mathbf{x}) \leq CA^{-2}\sum_{j=1}^{N}\int_{Q^{*}}|T_jf(\mathbf{x})|^2\,dw(\mathbf{x})\\&+CA^{-2}\int_{\mathcal{O}(Q^{*})}|f(\mathbf{x})|^2m(\mathbf{x})^2\,dw(\mathbf{x})+CA^{2\mathbf{N}}\varepsilon^{-1}\int_{Q^{*}}V(\mathbf{x})|(\phi_{Q}f)(\mathbf{x})|^2\,dw(\mathbf{x}).
    \end{split}
\end{equation}
Summing up over all $Q \in \mathcal{Q}$ we get
\begin{align*}
     \int_{\mathbb{R}^N}|f(\mathbf{x})|^2m(\mathbf{x})^2\,dw(\mathbf{x}) &\leq CA^{-2}\left(\sum_{j=1}^{N}\int_{\mathbb{R}^N}|T_jf(\mathbf{x})|^2\,dw(\mathbf{x})+|G|\int_{\mathbb{R}^N}|f(\mathbf{x})|^2m(\mathbf{x})^2\,dw(\mathbf{x})\right)\\&+CA^{2\mathbf{N}}\varepsilon^{-1}\int_{\mathbb{R}^N}V(\mathbf{x})|f(\mathbf{x})|^2\,dw(\mathbf{x}).
\end{align*}
Taking into account~\eqref{eq:a_priori} and taking $A$ large enough we obtain the claim for $f$ satisfying~\eqref{eq:a_priori}. For general case, we take a radial function $\eta \in C^{\infty}_c(\mathbb{R}^N)$ such that $0 \leq \eta \leq 1$, $\eta(\mathbf{x})=1$ for all $\|\mathbf{x}\| \leq 1$, $\eta(\mathbf{x})=0$ for all $\|\mathbf{x}\|>2$, and 
\begin{align*}
    |\partial_{j}\eta(\mathbf{x})| \leq 2 \text{ for all }\mathbf{x} \in \mathbb{R}^N \text{ and }j \in \{1,2,\ldots,N\}.
\end{align*}
For $f \in \mathcal{D}(\mathbf{Q})$ and $n \in \mathbb{N}$ we define $f_n(\mathbf{x})=f(\mathbf{x})\eta(\mathbf{x}/n)$. Note that by Lemma~\ref{eq:general_Leibniz} we have $f_n \in \mathcal{D}(\mathbf{Q})$. Moreover, thanks to the fact that $f \in L^2(dw)$ and~\eqref{eq:Shen_B}, the condition~\eqref{eq:a_priori} is satisfied for $f_n$. Therefore, by~\eqref{eq:Fefferman-Phong} for $f_n$, we get
\begin{equation}\label{eq:FP_for_approx}
    \int_{\mathbb{R}^N}|f(\mathbf{x})|^2m(\mathbf{x})^2\,dw(\mathbf{x})=\lim_{n \to \infty} \int_{\mathbb{R}^N}|f_n(\mathbf{x})|^2m(\mathbf{x})^2\,dw(\mathbf{x}) \leq C\lim_{n \to \infty}\mathbf{Q}(f_n,f_n).
\end{equation}
Clearly, 
\begin{equation}\label{eq:form_closed_assumption_1}
\lim_{n \to \infty}\|f-f_n\|_{L^2(dw)}=0.   
\end{equation}
Moreover, thanks to the definition of $\eta$, the fact that $f,T_jf \in L^2(dw)$, and Lemma~\ref{lem:general_Leibniz}, we have
\begin{equation}\label{eq:form_der_part}
\begin{split}
    &\lim_{n_1,n_2 \to \infty}\int_{\mathbb{R}^N}|T_j(f_{n_1}-f_{n_2})(\mathbf{x})|^2\,dw(\mathbf{x}) \leq 2\lim_{n_1, n_2 \to \infty}\int_{\mathbb{R}^N}|T_jf(\mathbf{x})|^2|\eta(\mathbf{x}/n_1)-\eta(\mathbf{x}/n_2)|^2\,dw(\mathbf{x})\\&+4\lim_{n_1,n_2 \to \infty}\int_{\mathbb{R}^N}|f(x)|^2(|\partial_j (\eta(\mathbf{x}/n_1))|^2+|\partial_j (\eta(\mathbf{x}/n_2))|^2)\,dw(\mathbf{x})\\&\leq 2\lim_{n_1,n_2 \to \infty}\int_{\min(n_1,n_2) \leq \|\mathbf{x}\| \leq 2\max(n_1,n_2)}|T_jf(\mathbf{x})|^2\,dw(\mathbf{x})\\&+32\lim_{n_1,n_2 \to \infty}\int_{\mathbb{R}^N}|f(\mathbf{x})|^2(n_1^{-2}+n_2^{-2})\,dw(\mathbf{x})=0.
\end{split}
\end{equation}
Similarly, $V(\mathbf{x})^{1/2}f(\mathbf{x}) \in L^2(dw(\mathbf{x}))$, so
\begin{equation}\label{eq:form_potential_part}
\begin{split}
    &\lim_{n_1,n_2 \to \infty}\int_{\mathbb{R}^N}V(\mathbf{x})|(f_{n_1}-f_{n_2})(\mathbf{x})|^2\,dw(\mathbf{x}) \\&= \lim_{n_1, n_2 \to \infty}\int_{\mathbb{R}^N}V(\mathbf{x})|f(\mathbf{x})|^2|\eta(\mathbf{x}/n_1)-\eta(\mathbf{x}/n_2)|^2\,dw(\mathbf{x})\\&\leq \lim_{n_1,n_2 \to \infty}\int_{\min(n_1,n_2) \leq \|\mathbf{x}\| \leq 2\max(n_1,n_2)}V(\mathbf{x})|f(\mathbf{x})|^2\,dw(\mathbf{x})=0.
\end{split}
\end{equation}
Consequently, by~\eqref{eq:form_der_part} and~\eqref{eq:form_potential_part} we have
\begin{equation}\label{eq:form_closed_assumption_2}
    \lim_{n_1,n_2 \to \infty}\mathbf{Q}(f_{n_1}-f_{n_2},f_{n_1}-f_{n_2})=0.
\end{equation}
By~\cite[Lemma 4.1]{AH} the form $\mathbf{Q}$ is closed, so by~\eqref{eq:form_closed_assumption_1} and~\eqref{eq:form_closed_assumption_2} we get
\begin{align*}
    \lim_{n \to \infty}\mathbf{Q}(f_n,f_n)=\mathbf{Q}(f,f),
\end{align*}
which, thanks to~\eqref{eq:FP_for_approx}, ends the proof.
\end{proof}

\part{Hardy spaces associated with Dunkl--Schr\"odinger operator.}\label{part:Hardy}

\section{Statement of the results} \label{sec:Hardy_pre}

\subsection{Background to the subject}

The classical real Hardy spaces $H^p$ in $\mathbb R^N$ occurred as boundary values of harmonic functions on $\mathbb{R}_{+}\times \mathbb{R}^{N}$  satisfying generalized Cauchy--Riemann equations together with certain $L^p$ bound conditions (see e.g. Stein--Weiss \cite{SW}). In the seminal paper of Fefferman and Stein~\cite{FS} the spaces $H^p$ were characterized by means of real analysis. One of the possible characterization assets that  a tempered distribution $f$ belongs to the $H^p(\mathbb R^N)$, $0<p<\infty$, if and only if the maximal function
$\sup_{t>0} |\mathbf{h}_t \ast f(\mathbf{x})|$ belongs to $L^p(\mathbb R^N)$, where $\mathbf{h}_t$ is the heat kernel of the semigroup $e^{t\Delta_{\text{eucl}}}$. An important contribution to the theory is the atomic decomposition proved by Coifman \cite{Coifman} for $N=1$  and Latter \cite{Latter} in higher dimensions, which says that every element of $H^p$ can be written as an (infinite) combination of special simple functions called atoms. These characterizations led to generalizations of the Hardy spaces on  spaces of homogeneous type, in particular, to $H^p$ spaces associated with semigroups of linear operators. In \cite{ADzH} (see also  \cite{Anker15},~\cite{DzH1}) a theory of Hardy spaces $H^1$ in the rational Dunkl setting parallel to the classical one was developed. The purpose of the remaining part of the paper is to study an $H^1_L$ space related to $L$. Our starting definition is that by means of the maximal function for the semigroup $e^{-tL}$. Then we shall prove that the space admits a special atomic decomposition. This result generalizes one of \cite{Hejna} where $H^1_L$ for  the Dunkl harmonic oscillator $-\Delta+\|\mathbf x\|^2$ was consider.   In~\cite{HMMLY} the authors provided a general approach to the theory of Hardy spaces associated with semigroups satisfying Davies--Gaffney estimates and in particular Gaussian bounds.  We want to  emphasize that the integral kernel for  the Dunkl--Laplace semigroup does not satisfy the  Gaussian bounds. Therefore the methods developed in~\cite{HMMLY}  cannot be directly applied.

\subsection{Hardy spaces associated with \texorpdfstring{$L$}{L}}
Let us introduce the notion of the Hardy space associated with the operator $L$.

\begin{definition}\normalfont
 Let $f \in L^{1}(dw)$. We say that $f$ belongs to the \textit{Hardy space} $H^1_{L}$ associated with operator $L $ if and only if
\begin{equation}
f^{*}(\mathbf{x})=\sup_{t>0}|K_tf(\mathbf{x})|
\end{equation}
belongs to $L^1(dw)$. The norm in the space is given by
\begin{equation}
\|f\|_{H^1_{L}}=\|f^{*}\|_{L^{1}(dw)}.
\end{equation}
\end{definition}

Let $\mathcal{Q}$ be a collection of closed cubes with parallel sides whose interiors are disjoint such that $\bigcup_{Q \in \mathcal{Q}}Q=\mathbb{R}^N$. Let us remind that $d(Q)$ denotes the side-length of cube $Q$ and we denote by $Q^{*}$ the cube with the same center as $Q$ such that $d(Q^{*})=2d(Q)$. Assume that this family satisfies the following finite overlapping condition: 
\begin{equation}\label{eq:finite_overlap}\tag{F}
    \left(\exists C_0>0\right)\left(\forall Q_1,Q_2 \in \mathcal{Q}\right)\, Q_1^{****} \cap Q_2^{****}\neq \emptyset \Rightarrow C_0^{-1}d(Q_1) \leq d(Q_2) \leq C_0 d(Q_1).
\end{equation}

We define the atomic Hardy space associated with the collection $\mathcal{Q}$ (see~\cite{DZ_Studia}).
\begin{definition}\label{def:atomic}\normalfont
A measurable function $a(\mathbf{x})$ is called an \textit{atom for the Hardy space} $H^{1,{{\rm{at}}}}_{\mathcal{Q}}$  associated with the collection of cubes $\mathcal{Q}$ if
\begin{enumerate}[(A)]
\item{$\supp a \subseteq B(\mathbf{x}_0,r) \subseteq Q^{****}$ for some $Q \in \mathcal{Q}$, $\mathbf{x}_0 \in \mathbb{R}^N$, and $r>0$,}\label{numitem:supports}
\item{$\sup_{\mathbf{y} \in \mathbb{R}^N}|a(\mathbf{y})|\leq w(B(\mathbf{x}_0,r))^{-1}$,}\label{numitem:size}
\item{if $r < d(Q)$, then $\int_{\mathbb{R}^N}a(\mathbf{x})\,dw(\mathbf{x})=0$.}\label{numitem:cancellation}
\end{enumerate}
 The \textit{atomic Hardy space} $H^{1,{\rm{at}}}_{\mathcal{Q}}$ associated with the collection $\mathcal{Q}$ is the space of functions $f \in L^1(dw)$ which admit a representation of the form
\begin{equation}
f(\mathbf{x})=\sum_{j=1}^{\infty}c_j a_j(\mathbf{x}),
\end{equation}
where $c_j \in \mathbb{C}$ and $a_j$ are atoms for the Hardy space $H^{1,{\rm{at}}}_{\mathcal{Q}}$ such that $\sum_{j=1}^{\infty}|c_j|<\infty$. The space $H^{1,{\rm{at}}}_{\mathcal{Q}}$ is a Banach space with the norm
\begin{equation}
\|f\|_{H^{1,{\rm{at}}}_{\mathcal{Q}}}=\inf\left\{\sum_{j=1}^{\infty}|c_j|:  f(\mathbf{x})=\sum_{j=1}^{\infty}c_j a_j(\mathbf{x}) \text{ and } a_j \text{ are }H^{1,{\rm{at}}}_{\mathcal{Q}}\text{ atoms}\right\}.
\end{equation}
\end{definition}

Inspired by~\cite{DZ_Studia}, we consider the following two additional conditions on $\mathcal{Q}$ and $V$:
\begin{equation}\label{eq:K}\tag{K}
\begin{split}
    (\exists C,\delta>0)(\forall \mathbf{x} \in \mathbb{R}^N, \, Q \in \mathcal{Q}, \ t \leq d(Q)^2) \int_0^{2t}\int_{Q^{***}}V(\mathbf{y})\mathcal{G}_{2s/c}(\mathbf{x},\mathbf{y})\,dw(\mathbf{y})\,ds \leq C\left(\frac{t}{d(Q)^2}\right)^{\delta},
\end{split}
\end{equation}
where $c>0$ is the constant from Theorem~\ref{theorem:heat},
\begin{equation}\label{eq:D}\tag{D}
    (\exists C,\varepsilon>0) (\forall Q \in \mathcal{Q}, \,s \in \mathbb{N})\sup_{\mathbf{y} \in Q^{****}}\int_{\mathbb{R}^N}k_{2^{s}d(Q)^2}(\mathbf{x},\mathbf{y})\,dw(\mathbf{x}) \leq Cs^{-1-\varepsilon}.
\end{equation}

The next theorem is one of the main result of the paper. We provide its proof in Section~\ref{sec:proof}.

\begin{theorem}\label{theorem:main}
Assume that the conditions~\eqref{eq:finite_overlap},~\eqref{eq:D}, and~\eqref{eq:K} hold for $V$ and $\mathcal{Q}$. There is a constant $C>0$ such that for all $f \in L^1(dw)$ we have
\begin{equation}
    C^{-1}\|f\|_{H^{1,{\rm{at}}}_{\mathcal{Q}}} \leq \|f\|_{H^1_{L}} \leq C\|f\|_{H^{1,{\rm{at}}}_{\mathcal{Q}}}.
\end{equation}
\end{theorem}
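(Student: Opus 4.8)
\textbf{Proof strategy for Theorem~\ref{theorem:main}.}
The plan is to prove the two inclusions separately, following the template of~\cite{DZ_Studia}. For the inclusion $H^{1,\mathrm{at}}_{\mathcal Q}\hookrightarrow H^1_L$ it suffices, by density and the definition of the atomic space, to show that there is a uniform constant $C$ with $\|a\|_{H^1_L}\le C$ for every $H^{1,\mathrm{at}}_{\mathcal Q}$ atom $a$; then summing over an atomic representation $f=\sum c_j a_j$ gives $\|f\|_{H^1_L}\le C\sum|c_j|$, hence $\|f\|_{H^1_L}\le C\|f\|_{H^{1,\mathrm{at}}_{\mathcal Q}}$. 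To estimate $\|a^*\|_{L^1(dw)}$ for an atom supported in $B=B(\mathbf x_0,r)\subseteq Q^{****}$, I would split $\mathbb R^N$ into the ``local'' region $\mathcal O(B^{\kappa})$ for a suitable dilation constant $\kappa$ (whose $w$-measure is comparable to $w(B)$ by~\eqref{eq:ball_orbit_compare} and the doubling property) and its complement. On the local part one uses $\|a^*\|_{L^2(dw)}\le C w(B)^{-1/2}$, which follows from the contractivity of $K_t$ on $L^2(dw)$ together with $0\le k_t\le h_t$ and the $L^2$-maximal bound for the Dunkl heat semigroup, and then Cauchy--Schwarz. On the complement one must use the decay of $k_t$ together with the cancellation condition (iii) in the case $r<d(Q)$, exactly as in the classical local Hardy space estimates of Goldberg~\cite{Goldberg} and their Dunkl analogue in~\cite[Section 5]{Hejna}; when $r\ge d(Q)$ no cancellation is available but then $d(Q)\lesssim r$ forces $t\lesssim d(Q)^2$ to be the relevant range and one invokes~\eqref{eq:D} (with $2^s d(Q)^2\sim t$) to control the contribution of large $t$, while~\eqref{eq:K} handles the perturbation $V$ against the free heat kernel via the Duhamel formula
\[
k_t(\mathbf x,\mathbf y)=h_t(\mathbf x,\mathbf y)-\int_0^t\int_{\mathbb R^N}h_{t-s}(\mathbf x,\mathbf z)V(\mathbf z)k_s(\mathbf z,\mathbf y)\,dw(\mathbf z)\,ds.
\]

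\textbf{The reverse inclusion.}
For $H^1_L\hookrightarrow H^{1,\mathrm{at}}_{\mathcal Q}$ I would argue as follows. Given $f\in H^1_L$, first compare the semigroup maximal function $f^*$ with the local Dunkl heat maximal function $\mathcal M_{\mathrm{loc}}f(\mathbf x)=\sup_{0<t<m(\mathbf x)^{-2}}|H_tf(\mathbf x)|$ truncated at the critical scale dictated by $\mathcal Q$ (equivalently by $m$, using Fact~\ref{fact:m_d_compare}). The Duhamel formula above together with~\eqref{eq:K} shows that for $t\le d(Q)^2$ the difference $|K_tf-H_tf|$ is controlled by a maximal operator applied to $f$ that is bounded on $L^1(dw)$, so $f$ also lies in the \emph{local} Hardy space $h^1_{\mathcal Q}$ associated with the collection $\mathcal Q$ and the free heat semigroup, with $\|f\|_{h^1_{\mathcal Q}}\le C\|f\|_{H^1_L}$. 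One then invokes the characterization of this family of local Hardy spaces in the Dunkl setting established in~\cite[Section 5]{Hejna}: a function in $h^1_{\mathcal Q}$ admits a decomposition into Goldberg-type atoms adapted to $\mathcal Q$, i.e. precisely the atoms of Definition~\ref{def:atomic}. This yields $\|f\|_{H^{1,\mathrm{at}}_{\mathcal Q}}\le C\|f\|_{H^1_L}$.

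\textbf{Where the work concentrates.}
The routine pieces are the $L^2$ maximal bound, the doubling/orbit geometry, and the bookkeeping in the atomic summation. The genuinely delicate point is controlling the semigroup maximal function at \emph{large} times $t\gtrsim d(Q)^2$, where the free heat kernel $h_t$ has essentially no decay in $w$-measure and one must exploit the extra decay produced by the potential; this is exactly where~\eqref{eq:D} enters, and verifying that~\eqref{eq:D} holds for $V\in\mathrm{RH}^q(dw)$ with the collection $\mathcal Q$ of~\eqref{eq:stopping_time} is in turn the place where the Fefferman--Phong inequality (Theorem~\ref{theo:Fefferman-Phong}) is used — via the spectral-theoretic fact that $\mathbf Q(f,f)\ge c\int m^2|f|^2\,dw$ forces $e^{-tL}$ to decay like $e^{-ct m(\mathbf x)^2}$ in an averaged sense. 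A secondary technical obstacle is that, unlike the classical case, the non-locality of the Dunkl operators forces all the local estimates to be phrased in terms of the orbit distance $d(\mathbf x,\mathbf y)$ and the orbit $\mathcal O(Q^{*})$ rather than ordinary balls, so each comparison between $k_t$ and $h_t$ must be tracked on orbits; this is handled using Theorem~\ref{theorem:heat} and Lemma~\ref{lem:nonradial_estimation} but requires care at every step.
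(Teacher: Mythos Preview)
Your overall architecture is right and tracks the paper's template, but the reverse inclusion $H^1_L\hookrightarrow H^{1,\mathrm{at}}_{\mathcal Q}$ has a real gap. You write that Duhamel plus~\eqref{eq:K} shows ``the difference $|K_tf-H_tf|$ is controlled by a maximal operator applied to $f$ that is bounded on $L^1(dw)$'', and then you invoke~\cite[Section 5]{Hejna} to get atoms. Two problems. First, condition~\eqref{eq:K} is a \emph{local} statement (the $V$-integral is only over $Q^{***}$), and the paper only proves the perturbation bound for the \emph{localized} function $\phi_Q f$ (Lemma~\ref{lem:difference}), not for $f$ itself; passing from $K_t(\phi_Q f)$ back to $\phi_Q K_t f$ is a separate commutator estimate (Lemma~\ref{lem:commutator}) that you never mention, and controlling the contribution of $f$ supported far from $Q$ requires the ``far cubes'' estimate~\eqref{eq:outside_sum}, which is where~\eqref{eq:D} enters on this side of the argument, not only on the atom side. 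Second, Proposition~\ref{propo:Goldberg} (which is what~\cite[Section 5]{Hejna} gives) is a \emph{fixed-scale} result: it characterizes $H^1_{\mathrm{loc},T}$ for a single $T$ and, crucially, only produces atoms in $B(\mathbf y_0,4T)$ when the input is already supported in $B(\mathbf y_0,T)$. There is no ready-made variable-scale space $h^1_{\mathcal Q}$ with a global atomic characterization sitting in~\cite{Hejna}; you have to manufacture it by first cutting $f=\sum_Q\phi_Q f$, showing $\sum_Q\|\phi_Q f\|_{H^1_{\mathrm{loc},d(Q)}}\le C\|f\|_{H^1_L}$, and then applying Proposition~\ref{propo:Goldberg} cube by cube. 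That summed inequality is exactly where the commutator lemma and~\eqref{eq:outside_sum} are indispensable.

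On the forward inclusion your space-splitting ($\mathcal O(B^\kappa)$ versus complement, $L^2$ plus Cauchy--Schwarz on the near part) is a legitimate alternative to the paper's time-splitting, but be aware that on the far part you cannot ``use the decay of $k_t$ together with the cancellation condition'' directly: no H\"older regularity of $k_t(\mathbf x,\cdot)$ is available. The paper sidesteps this by writing $K_t a=(K_t-H_t)a+H_t a$, bounding the first piece via Lemma~\ref{lem:difference} (this uses~\eqref{eq:K} and needs the finite decomposition $a=\sum_{Q'}\phi_{Q'}a$), and recognizing the second piece as the action of the free semigroup on a local atom, for which Proposition~\ref{propo:Goldberg} already gives the bound. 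Your sketch mentions Duhamel and~\eqref{eq:K} but does not make clear that the cancellation is exploited through $H_t$, not $K_t$; as written, the step would fail.
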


It can be checked that the conditions~\eqref{eq:finite_overlap},~\eqref{eq:D}, and~\eqref{eq:K} hold for potentials $V$ satisfying the reverse H\"older inequality with $q > \frac{\mathbf{N}}{2}$ and the associated collection of cubes~\eqref{eq:stopping_time}, so we obtain the following corollary.

\begin{corollary}\label{coro:main}
Assume that the potential $V$ satisfies the reverse H\"older inequality~\eqref{eq:reverse_Holder}. There is a constant $C>0$ such that for all $f \in L^1(dw)$ we have
\begin{align*}
    C^{-1}\|f\|_{H^{1,{\rm{at}}}_{\mathcal{Q}}} \leq \|f\|_{H^1_{L}} \leq C\|f\|_{H^{1,{\rm{at}}}_{\mathcal{Q}}},
\end{align*}
where $\mathcal{Q}$ is the collection of cubes defined in~\eqref{eq:stopping_time}.
\end{corollary}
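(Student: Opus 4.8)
The plan is to deduce Corollary~\ref{coro:main} from Theorem~\ref{theorem:main}: it suffices to check that, for $V\in\text{RH}^q(dw)$ with $q>\max(1,\mathbf N/2)$ and $\mathcal Q$ the stopping-time collection of~\eqref{eq:stopping_time}, the three hypotheses~\eqref{eq:finite_overlap},~\eqref{eq:K} and~\eqref{eq:D} hold. Condition~\eqref{eq:finite_overlap} is exactly Fact~\ref{fact:F}. Throughout the verification of~\eqref{eq:K} and~\eqref{eq:D} the workhorses are: the comparability $m(\mathbf x)\sim d(Q)^{-1}$ on $Q^{****}$ for $Q\in\mathcal Q$ (Fact~\ref{fact:m_d_compare}); the doubling of $\mu$ (Lemma~\ref{lem:mu_doubling}) and the $A_p$/reverse-H\"older self-improvement that follows from it; the rescaling inequality of Lemma~\ref{lem:Rr}; and the heat kernel bound $0\le k_t\le h_t\lesssim \mathcal G_{t/c}$ coming from~\eqref{eq:kernels_compare} and Theorem~\ref{theorem:heat}.

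For~\eqref{eq:K}, fix $Q\in\mathcal Q$, $\mathbf x\in\mathbb R^N$ and $t\le d(Q)^2$. For fixed $s\in(0,2t)$ I would split $Q^{***}$ into the dyadic shells $S_j=\{\mathbf y\in Q^{***}:2^{j}\sqrt s<d(\mathbf x,\mathbf y)\le 2^{j+1}\sqrt s\}$, $j\ge 0$, together with the core $\{d(\mathbf x,\mathbf y)\le \sqrt s\}$. On $S_j$ one has $\mathcal G_{2s/c}(\mathbf x,\mathbf y)\le C\,w(B(\mathbf x,\sqrt s))^{-1}e^{-c4^{j}}$, and $S_j$ is covered by at most $|G|$ Euclidean balls $B(\sigma\mathbf x,2^{j+1}\sqrt s)$; for those $\sigma$ whose ball meets $Q^{***}$, if $2^{j}\sqrt s\lesssim d(Q)$ the ball lies in a fixed dilate of $Q$, so Lemma~\ref{lem:Rr} (from radius $2^{j}\sqrt s$ up to $d(Q)$) together with the stopping-time bound $\tfrac{d(Q)^2}{w(Q)}\int_Q V\,dw\lesssim 1$ gives $\int_{B(\sigma\mathbf x,2^{j+1}\sqrt s)}V\,dw\lesssim (2^{j}\sqrt s)^{-2}(2^{j}\sqrt s/d(Q))^{2-\mathbf N/q}w(B(\mathbf x,2^{j}\sqrt s))$; while for $2^{j}\sqrt s\gtrsim d(Q)$ one uses instead $\int_{Q^{***}}V\,dw=\mu(Q^{***})\lesssim\mu(Q)\lesssim w(Q)d(Q)^{-2}$ against the much faster Gaussian decay. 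Summing the resulting geometric series in $j$ (invoking~\eqref{eq:growth}) yields, uniformly in $\mathbf x$,
\[
\int_{Q^{***}}V(\mathbf y)\,\mathcal G_{2s/c}(\mathbf x,\mathbf y)\,dw(\mathbf y)\le C\,s^{-1}\Big(\tfrac{\sqrt s}{d(Q)}\Big)^{2-\mathbf N/q}.
\]
Integrating in $s\in(0,2t)$, the exponent of $s$ in the integrand is $-\mathbf N/(2q)>-1$ precisely because $q>\mathbf N/2$, so the $s$-integral converges and equals a constant times $(t/d(Q)^2)^{1-\mathbf N/(2q)}$; this is~\eqref{eq:K} with $\delta=1-\mathbf N/(2q)$.

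For~\eqref{eq:D}, observe first that, by symmetry of $k_t$, the quantity $\int_{\mathbb R^N}k_{2^{s}d(Q)^2}(\mathbf x,\mathbf y)\,dw(\mathbf x)$ equals $K_{2^{s}d(Q)^2}\mathbf 1(\mathbf y)$, where $\mathbf 1$ denotes the constant function. The crux is the decay estimate: for every $N_0>0$ there is $C_{N_0}$ with $K_t\mathbf 1(\mathbf y)\le C_{N_0}(1+t\,m(\mathbf y)^2)^{-N_0}$. I would prove this from the Duhamel identity $K_t\mathbf 1=\mathbf 1-\int_0^t K_\sigma V\,d\sigma$ (equivalently $K_t=H_t-\int_0^t K_{t-\sigma}VH_\sigma\,d\sigma$) together with the Fefferman--Phong inequality (Theorem~\ref{theo:Fefferman-Phong}): the estimate of the previous paragraph, carried out over all of $\mathbb R^N$, shows $1-K_\tau\mathbf 1(\mathbf y)$ stays small for $\tau\lesssim m(\mathbf y)^{-2}$, while $\int_0^\tau K_\sigma V(\mathbf y)\,d\sigma$ is bounded below by a positive absolute constant once $\tau\sim m(\mathbf y)^{-2}$ --- here Theorem~\ref{theo:Fefferman-Phong} supplies that $V$ is ``effectively $\gtrsim m^2$'' --- so that $K_{\varepsilon_0 m(\mathbf y)^{-2}}\mathbf 1(\mathbf y)\le\theta<1$ uniformly for a fixed small $\varepsilon_0$; iterating over time-steps of length $\sim m(\mathbf y)^{-2}$ and using~\eqref{eq:Shen_A},~\eqref{eq:Shen_B},~\eqref{eq:Shen_C} to control how far $m$ can drop across the region reached by the semigroup then upgrades this to the claimed polynomial decay. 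Since $m(\mathbf y)\sim d(Q)^{-1}$ on $Q^{****}$ by Fact~\ref{fact:m_d_compare}, taking $t=2^{s}d(Q)^2$ gives $K_{2^{s}d(Q)^2}\mathbf 1(\mathbf y)\le C_{N_0}(1+c2^{s})^{-N_0}\le C\,s^{-1-\varepsilon}$, which is~\eqref{eq:D} with a large margin.

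Having checked~\eqref{eq:finite_overlap},~\eqref{eq:K} and~\eqref{eq:D}, the corollary is immediate from Theorem~\ref{theorem:main}. Conditions~\eqref{eq:finite_overlap} and~\eqref{eq:K} amount to bookkeeping with the $A_p$/reverse-H\"older toolbox built in the earlier sections; I expect the genuine obstacle to be the decay estimate for $K_t\mathbf 1$ behind~\eqref{eq:D}, because the Dunkl heat kernel satisfies only the weaker bound of Theorem~\ref{theorem:heat} rather than a pointwise Gaussian estimate, and the Dunkl translations are not known to be $L^p$-bounded. Consequently the classical iteration --- which rests on Gaussian bounds and the Euclidean Fefferman--Phong inequality --- must be reworked throughout with $\mathcal G_t$ in place of the Gaussian, with the orbit distance $d(\mathbf x,\mathbf y)$, and with the Dunkl Fefferman--Phong inequality of Theorem~\ref{theo:Fefferman-Phong}.
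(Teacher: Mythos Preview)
Your verification of~\eqref{eq:finite_overlap} is the paper's, and your dyadic--shell proof of~\eqref{eq:K} is correct though the paper takes a shorter path: it applies H\"older's inequality with exponent $q$ directly to $\int_{Q^{***}}V\,\mathcal G_{2s/c}\,dw$, bounds $\|\mathcal G_{2s/c}(\mathbf x,\cdot)\|_{L^{q'}(Q^{***})}$ using only~\eqref{eq:growth}, and then invokes the reverse H\"older inequality and the stopping condition. Both routes give $\delta=1-\mathbf N/(2q)$; yours is more geometric, the paper's more analytic.

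The gap is in your argument for~\eqref{eq:D}. You need the single--step contraction $K_{\tau}\mathbf 1(\mathbf y)\le\theta<1$ at $\tau\sim m(\mathbf y)^{-2}$, i.e.\ a \emph{pointwise lower bound} $\int_0^{\tau}K_\sigma V(\mathbf y)\,d\sigma\ge 1-\theta$. You attribute this to Theorem~\ref{theo:Fefferman-Phong} by saying it makes ``$V$ effectively $\gtrsim m^2$'', but the Fefferman--Phong inequality goes the other way: it bounds $\int m^2|f|^2\,dw$ \emph{above} by $\mathbf Q(f,f)$, and gives no pointwise comparison of $V$ with $m^2$. In the Euclidean case this step is usually recovered from on--diagonal \emph{lower} bounds for the heat kernel together with the definition of $m$, but here you only have $k_\sigma\le h_\sigma$ and no usable lower bound on $k_\sigma$, so the lower bound on $\int_0^{\tau}K_\sigma V(\mathbf y)\,d\sigma$ is unjustified. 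Without it the iteration never starts.

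The paper avoids this entirely. It does not iterate and does not aim for arbitrary polynomial decay; a single small power suffices for~\eqref{eq:D}. It applies the Fefferman--Phong inequality to the function $f=k_t(\cdot,\mathbf y)\in\mathcal D(L)$, obtaining
\[
\int_{\mathbb R^N}k_t(\mathbf x,\mathbf y)^2\,m(\mathbf x)^2\,dw(\mathbf x)\le C\,\langle Lk_t(\cdot,\mathbf y),k_t(\cdot,\mathbf y)\rangle\le \frac{C}{t\,w(B(\mathbf y,\sqrt t))},
\]
where the last bound comes from analyticity of the semigroup (Lemma in \S10.1). One then splits $\bigl(\int k_t(\mathbf x,\mathbf y)\,dw(\mathbf x)\bigr)^2$ into $\{\|\mathbf x-\mathbf y\|\le r\}$ (controlled by Cauchy--Schwarz,~\eqref{eq:Shen_C}, and the display above) and $\{\|\mathbf x-\mathbf y\|>r\}$ (controlled by the factor $(1+\|\mathbf x-\mathbf y\|/\sqrt t)^{-2}$ in Theorem~\ref{theorem:heat}), and optimizes in $r$. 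This yields $\int k_t(\mathbf x,\mathbf y)\,dw(\mathbf x)\le C\,(t\,m(\mathbf y)^2)^{-\varepsilon_1}$ for some $\varepsilon_1>0$, which combined with Fact~\ref{fact:m_d_compare} gives~\eqref{eq:D}. So the Fefferman--Phong inequality \emph{is} the key ingredient, but it enters as an $L^2$ inequality applied to $k_t(\cdot,\mathbf y)$, not as a pointwise statement about $V$.
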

Corollary~\ref{coro:main} is proved in Section~\ref{sec:verification}, where the conditions~\eqref{eq:finite_overlap},~\eqref{eq:D}, and~\eqref{eq:K} are verified.

\section{Local Hardy spaces}

The following two definitions are inspired by~\cite{Goldberg} (see also~\cite{Hejna}).
\begin{definition}\normalfont
Let $T>0$ and $f \in L^{1}(dw)$. We say that $f$ belongs to the \textit{local Hardy space} $H_{\rm{loc},T}^{1}$ associated with the Dunkl Laplacian if and only  if
\begin{equation}
f_{{\rm{loc}},T}^{*}(\mathbf{x})=\sup_{0<t \leq T^2}|{H_t}f(\mathbf{x})|
\end{equation}
belongs to $L^1(dw)$. The norm in the space is given by
\begin{equation}
\|f\|_{H^{1}_{{\rm{loc}},T}}=\|f^{*}_{{\rm{loc}},T}\|_{L^{1}(dw)}.
\end{equation}
\end{definition}

\begin{definition}\label{def:atomic_Dunkl}\normalfont
Let $T>0$. A function $a(\mathbf{x})$ is called an \textit{atom for the local Hardy space} $H^{1,{{\rm{at}}}}_{{\rm{loc}},T}$ if
\begin{enumerate}[(A)]
\item{$\supp a \subseteq B(\mathbf{x},r)$ for some $\mathbf{x} \in \mathbb{R}^N$ and $r>0$,}\label{numitem:support}
\item{$\sup_{\mathbf{y} \in \mathbb{R}^N}|a(\mathbf{y})| \leq w(B(\mathbf{x},r))^{-1}$,}\label{numitem:L_infty}
\item{If $r < T$, then $\int_{\mathbb{R}^N}a(\mathbf{x})\,dw(\mathbf{x})=0$.}\label{numitem:cancellations}
\end{enumerate}
A function $f$ belongs to the \textit{local Hardy space} $H_{{\rm{loc}},T}^{1,\rm{at}}$ if there are $c_j\in\mathbb{ C}$ and atoms $a_j$ for $H^{1,{{\rm{at}}}}_{{\rm{loc}},T}$ such that $\sum_{j=1}^{\infty}|c_j|<\infty$,
\begin{equation}
\label{eq:atomic_representation_local}
f=\sum_{j=1}^{\infty}c_j\,a_j\,.
\end{equation}
{In this case,} set
$
\|f\|_{H^{1,{{\rm{at}}}}_{{\rm{loc}},T}}=\inf\,\Bigl\{\,\sum_{j=1}^{\infty}|c_j|\,\Bigr\}\,,
$
where the infimum is taken over all representations {\eqref{eq:atomic_representation_local}}.
\end{definition}

The following proposition was proved in~\cite{Hejna} and its proof follows the pattern from~\cite{Goldberg}.

\begin{proposition}\label{propo:Goldberg}
The spaces $H^{1,{{\rm{at}}}}_{{\rm{loc}},T}$ and $H^1_{{\rm{loc}},T}$ coincide and their norms are equivalent. Moreover, there exists a constant $C>0$ such that for any $T>0$ if $f \in H^{1,{{\rm{at}}}}_{{\rm{loc}},T}$ and $\supp f \subseteq B(\mathbf{y}_0,T)$, then there are $H^{1,{{\rm{at}}}}_{{\rm{loc}},T}$ atoms $a_j$ such that $\supp a_j \subseteq B(\mathbf{y}_0,4T)$ and
\begin{equation}
f=\sum_{j=1}^{\infty}c_ja_j, \qquad \sum_{j=1}^{\infty}|c_j| \leq C\|f\|_{H^{1,{{\rm{at}}}}_{{\rm{loc}},T}}.
\end{equation}
\end{proposition}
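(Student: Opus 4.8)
\textbf{Proof proposal for Proposition~\ref{propo:Goldberg}.}
The plan is to reduce everything to the Euclidean-type local Hardy space theory of Goldberg~\cite{Goldberg}, carried out in the space of homogeneous type $(\mathbb{R}^N, \|\cdot\|, dw)$ with the additional structure coming from the Dunkl heat semigroup. The statement has two halves: (i) the coincidence of $H^{1,{\rm{at}}}_{{\rm{loc}},T}$ and $H^1_{{\rm{loc}},T}$ with equivalent norms, and (ii) the ``localized'' atomic decomposition for functions supported in a ball $B(\mathbf{y}_0,T)$ with atoms supported in $B(\mathbf{y}_0,4T)$ and uniform control in $T$. Since~\cite[Section~5]{Hejna} already treats this family of local Hardy spaces, the real task is to quote that reference correctly and isolate the scale-invariance that makes the constant $C$ in the last assertion independent of $T$.

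First I would establish (i). For the inclusion $H^{1,{\rm{at}}}_{{\rm{loc}},T} \hookrightarrow H^1_{{\rm{loc}},T}$ one checks that a single atom $a$ has $\|a^{*}_{{\rm{loc}},T}\|_{L^1(dw)} \leq C$ uniformly: using the Gaussian-type bound~\eqref{eq:Gauss} for $h_t$ (together with Lemma~\ref{lem:nonradial_estimation} when a smooth approximation is convenient), split $\int |a^{*}_{{\rm{loc}},T}|\,dw$ into the part over $B(\mathbf{x},2r)$ (handled by $L^2$-boundedness of the maximal operator and condition~\eqref{numitem:L_infty}) and the tail over $\mathbb{R}^N \setminus B(\mathbf{x},2r)$; on the tail, if $r < T$ one uses the cancellation~\eqref{numitem:cancellations} and the regularity of $h_t$ in the second variable to gain an extra factor, while if $r \geq T$ the truncation $t \leq T^2$ forces exponential decay in $\|\mathbf{x}-\mathbf{y}\|/T \geq \|\mathbf{x}-\mathbf{y}\|/r$ and no cancellation is needed. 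Summing the bound over an atomic representation gives $\|f\|_{H^1_{{\rm{loc}},T}} \leq C\|f\|_{H^{1,{\rm{at}}}_{{\rm{loc}},T}}$. The converse, $\|f\|_{H^{1,{\rm{at}}}_{{\rm{loc}},T}} \leq C\|f\|_{H^1_{{\rm{loc}},T}}$, is the substantive direction: one runs Goldberg's grand-maximal-function argument adapted to the homogeneous space $(\mathbb{R}^N,dw)$, performing a Calderón--Zygmund stopping-time decomposition of the level sets of $f^{*}_{{\rm{loc}},T}$ at height $\lambda$, producing a Whitney-type family of balls; cubes (or balls) of radius comparable to $T$ or larger yield atoms of type $r \geq T$ (no moment condition), and smaller ones yield atoms with the required vanishing integral, the cancellation being recovered because on small scales $H_t$ behaves like a genuine approximate identity. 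All of this is exactly~\cite[Section~5]{Hejna}, which I would cite rather than reproduce.

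For assertion (ii), the point is scale invariance. Rescaling $\mathbf{x} \mapsto \mathbf{x}/T$ maps $dw$ to $T^{-\mathbf{N}}dw$, $H_t$ to $H_{t/T^2}$, and hence $H^1_{{\rm{loc}},T}$ isometrically (after the obvious normalization) to $H^1_{{\rm{loc}},1}$; so it suffices to prove the claim for $T=1$ with an absolute constant, and then the $T$-dependence disappears. With $T=1$ and $\supp f \subseteq B(\mathbf{y}_0,1)$, take any atomic representation $f=\sum c_j a_j$ of $f$ as an element of $H^{1,{\rm{at}}}_{{\rm{loc}},1}$ with $\sum|c_j|$ nearly minimal; the atoms $a_j$ need not be supported near $B(\mathbf{y}_0,1)$. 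To fix this, multiply the identity by a fixed cutoff $\chi \in C_c^\infty$ with $\chi \equiv 1$ on $B(\mathbf{y}_0,1)$ and $\supp \chi \subseteq B(\mathbf{y}_0,2)$: then $f = \chi f = \sum c_j (\chi a_j)$, and each $\chi a_j$ is, up to a bounded multiplicative constant, again a local atom --- its support shrinks to $\supp a_j \cap B(\mathbf{y}_0,2)$, the size bound~\eqref{numitem:L_infty} is preserved up to a constant, and the moment condition, when required, is restored by subtracting a correction supported in the same ball carrying the lost integral (a standard ``atom plus small atom'' splitting). Atoms $a_j$ whose support is disjoint from $B(\mathbf{y}_0,2)$ simply drop out. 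Finally, any atom now supported in $B(\mathbf{y}_0,2)$ of radius possibly as large as a fixed constant can be written, by the already-proven atomic decomposition applied on the fixed ball $B(\mathbf{y}_0,4)$, as a bounded combination of atoms supported in $B(\mathbf{y}_0,4)$; collecting everything gives $f = \sum c_j' a_j'$ with $\supp a_j' \subseteq B(\mathbf{y}_0,4)$ and $\sum|c_j'| \leq C\|f\|_{H^{1,{\rm{at}}}_{{\rm{loc}},1}}$, and undoing the rescaling yields the statement with $4T$ in place of $4$.

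The main obstacle is the second direction of part (i) --- the maximal-function-to-atoms implication --- because the Dunkl heat kernel does \emph{not} satisfy a genuine pointwise Gaussian lower bound and its off-diagonal behavior is governed by the orbit distance $d(\mathbf{x},\mathbf{y})$ rather than $\|\mathbf{x}-\mathbf{y}\|$, so the Calderón--Zygmund machinery has to be run carefully with the bound~\eqref{eq:Gauss} and Lemma~\ref{lem:nonradial_estimation} in place of the classical kernel estimates; however, this has already been done in~\cite[Section~5]{Hejna}, and the correct move here is to invoke that work. A secondary technical point is verifying that the cutoff multiplication in part (ii) preserves atoms with uniform constants even when the atom's radius is comparable to $T$ --- there the moment condition can genuinely be lost and must be repaired by the small-atom correction, but this is routine.
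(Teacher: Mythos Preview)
Your proposal is correct and is consistent with the paper's approach: the paper itself gives no proof of this proposition at all, simply stating that it ``was proved in~\cite{Hejna} and its proof follows the pattern from~\cite{Goldberg}.'' You go further than the paper by sketching the actual argument --- the two directions of part~(i) via the standard atom-to-maximal and maximal-to-atom machinery, and the rescaling plus cutoff-correction argument for part~(ii) --- and you correctly identify that the substantive work (the Calder\'on--Zygmund decomposition adapted to the Dunkl heat kernel with its orbit-distance decay) is exactly what~\cite[Section~5]{Hejna} provides.
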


\section{Auxiliary lemmas}
Lemmas in this section are inspired by~\cite{DZ_Studia}. It turns out that the presence of the factor "$\Big(1+\frac{\|\mathbf{x}-\mathbf{y}\|^2}{t}\Big)^{-1}$" in the estimate from Theorem~\ref{theorem:heat} is crucial in the proof of Theorem~\ref{theorem:main} and its proper usage is the main difficulty and difference between the proofs here and in~\cite{DZ_Studia}. Let $\{\phi_{Q}\}_{Q \in \mathcal{Q}}$ be the resolution of identity associated with the collection $\mathcal{Q}$, which satisfies the analogous properties to that from Section~\ref{sec:Fefferman--Phong} (see e.g.~\eqref{eq:partition_derivative}).
\begin{lemma}
There is a constant $C>0$ such that for all $Q \in \mathcal{Q}$  and $f \in L^1(dw)$ we have
\begin{equation}\label{eq:outside_H}
    \int_{\mathbb{R}^N \setminus Q^{**}}\sup_{0 <t \leq d(Q)^2}|H_t(\phi_{Q}f)(\mathbf{x})|\,dw(\mathbf{x}) \leq C \|\phi_{Q}f\|_{L^1(dw)},
\end{equation}
\begin{equation}\label{eq:outside_K}
    \int_{\mathbb{R}^N \setminus Q^{**}}\sup_{0 <t \leq d(Q)^2}|K_t(\phi_{Q}f)(\mathbf{x})|\,dw(\mathbf{x}) \leq C \|\phi_{Q}f\|_{L^1(dw)}.
\end{equation}
\end{lemma}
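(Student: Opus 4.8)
The plan is to prove both estimates simultaneously by exploiting the pointwise bound $0 \leq k_t(\mathbf{x},\mathbf{y}) \leq h_t(\mathbf{x},\mathbf{y})$ from~\eqref{eq:kernels_compare}, which shows that $|K_t(\phi_Q f)(\mathbf{x})| \leq H_t(|\phi_Q f|)(\mathbf{x})$ for every $t>0$; hence~\eqref{eq:outside_K} follows from~\eqref{eq:outside_H} applied to $|\phi_Q f|$ in place of $\phi_Q f$, and it suffices to establish~\eqref{eq:outside_H}. First I would write
\[
H_t(\phi_Q f)(\mathbf{x}) = \int_{Q^*} h_t(\mathbf{x},\mathbf{y})\,(\phi_Q f)(\mathbf{y})\,dw(\mathbf{y}),
\]
using $\supp \phi_Q \subseteq Q^*$, so that only $\mathbf{y} \in Q^*$ contributes.

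Next I would exploit the geometric separation: for $\mathbf{x} \in \mathbb{R}^N \setminus Q^{**}$ and $\mathbf{y} \in Q^*$ one has $\|\mathbf{x}-\mathbf{y}\| \gtrsim \mathrm{dist}(\mathbf{x}, Q^*)$, and more importantly $d(\mathbf{x},\mathbf{y}) \geq \min_{\sigma \in G}\|\sigma(\mathbf{x}) - \mathbf{y}\|$ together with the fact that $\mathbf{y}$ stays in $Q^*$ gives a lower bound $d(\mathbf{x},\mathbf{y}) \gtrsim \mathrm{dist}(\mathcal{O}(\mathbf{x}), Q^*) + \mathrm{(local\ distance)}$; in fact the cleanest route is to note $d(\mathbf{x},\mathbf{y}) \geq c\,(\|\mathbf{x} - c_Q\| - C d(Q))$ where $c_Q$ is the center of $Q$ when $\mathbf{x}$ is far, but since $d$ only measures orbit distance I would instead split $\mathbb{R}^N \setminus Q^{**}$ into the region where $\|\mathbf{x} - c_Q\| \geq 4 d(Q)$ (so that $\|\mathbf{x}-\mathbf{y}\| \sim \|\mathbf{x}-c_Q\|$ for $\mathbf{y}\in Q^*$) and handle the annular piece $Q^{**} \subsetneq \{\,2d(Q) \lesssim \|\mathbf{x}-c_Q\| \lesssim 4d(Q)\,\}$ directly. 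On the far region, I apply Theorem~\ref{theorem:heat}: since $t \leq d(Q)^2$ and $\|\mathbf{x}-\mathbf{y}\| \geq 2 d(Q) \geq 2\sqrt{t}$, the factor $(1+\|\mathbf{x}-\mathbf{y}\|/t)^{-2}$ and the Gaussian $\exp(-d(\mathbf{x},\mathbf{y})^2/(t/c))$ combine to give a bound that, after integrating $dw(\mathbf{x})$ over dyadic annuli $\{2^k d(Q) \leq \|\mathbf{x}-c_Q\| < 2^{k+1}d(Q)\}$ and using the doubling property~\eqref{eq:doubling} plus~\eqref{eq:growth}, sums to a constant times $\sup_{\mathbf{y}\in Q^*} 1 \cdot \|\phi_Q f\|_{L^1(dw)}$. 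The only mild subtlety is that the supremum over $t$ sits inside the integral: I bound it by observing that for each fixed pair $(\mathbf{x},\mathbf{y})$ with $\|\mathbf{x}-\mathbf{y}\|\geq 2d(Q) \geq 2\sqrt{t}$ the function $t \mapsto h_t(\mathbf{x},\mathbf{y})$ is, via Theorem~\ref{theorem:heat}, dominated by $C\, w(B(\mathbf{x}, d(Q)))^{-1} (1 + \|\mathbf{x}-\mathbf{y}\|/d(Q)^2 \cdot d(Q)^2 )^{-2}\exp(-c\|\mathbf{x}-\mathbf{y}\|^2/d(Q)^2)$ uniformly in $t \leq d(Q)^2$ — concretely one checks that the right-hand side of~\eqref{eq:Gauss}, as a function of $t$, is increasing on the relevant range, so its supremum over $0<t\leq d(Q)^2$ is attained at $t = d(Q)^2$ up to constants.

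The annular piece $\{\mathbf{x} : 2d(Q) \leq \|\mathbf{x}-c_Q\| \leq C d(Q)\} \setminus Q^{**}$ is handled even more crudely: there $w(B(\mathbf{x},d(Q))) \sim w(Q^*) \sim w(B(\mathbf{y},d(Q)))$ for $\mathbf{y}\in Q^*$ by doubling, the exponential is bounded by $1$, and $\int_{\|\mathbf{x}-c_Q\|\leq Cd(Q)} dw(\mathbf{x}) \sim w(Q^*)$, so $\int h_t(\mathbf{x},\mathbf{y})\, dw(\mathbf{x})$ over this region is $\leq C$ uniformly, and Fubini gives the bound $C\|\phi_Q f\|_{L^1(dw)}$. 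The main obstacle, and the only place genuine care is needed, is interchanging $\sup_t$ with the integral and controlling the decay uniformly in $t$; once the pointwise domination of $\sup_{0<t\leq d(Q)^2} h_t(\mathbf{x},\mathbf{y})$ by a single kernel $\mathcal{H}(\mathbf{x},\mathbf{y})$ with Gaussian-type decay at scale $d(Q)$ is in hand, the rest is a routine Tonelli-and-dyadic-annuli computation, and~\eqref{eq:outside_K} follows from~\eqref{eq:outside_K}'s reduction to~\eqref{eq:outside_H} noted at the start.
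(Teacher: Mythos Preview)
Your reduction of \eqref{eq:outside_K} to \eqref{eq:outside_H} via \eqref{eq:kernels_compare} is fine, but the core step of your argument---the claim that the right-hand side of \eqref{eq:Gauss} is increasing in $t$ on $(0,d(Q)^2]$, so that $\sup_{0<t\le d(Q)^2} h_t(\mathbf{x},\mathbf{y})$ is dominated pointwise by a single Gaussian-type kernel at scale $d(Q)$---is false in the Dunkl setting. The Gaussian factor in Theorem~\ref{theorem:heat} involves the \emph{orbit} distance $d(\mathbf{x},\mathbf{y})=\min_{\sigma\in G}\|\sigma(\mathbf{x})-\mathbf{y}\|$, not $\|\mathbf{x}-\mathbf{y}\|$; for $\mathbf{x}\notin Q^{**}$ and $\mathbf{y}\in Q^{*}$ one certainly has $\|\mathbf{x}-\mathbf{y}\|\ge d(Q)$, but $d(\mathbf{x},\mathbf{y})$ can be zero (take $\mathbf{x}=\sigma(\mathbf{y})$ for some $\sigma\in G$ with $\sigma(\mathbf{y})\notin Q^{**}$). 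In that situation the exponential is identically $1$ and the bound from \eqref{eq:Gauss} reduces to $C\,t\,\|\mathbf{x}-\mathbf{y}\|^{-2}\,w(B(\mathbf{x},\sqrt{t}))^{-1}$, which by \eqref{eq:growth} behaves like $t^{1-N/2}$ for small $t$ and blows up as $t\to 0$ whenever $N\ge 3$. So there is no pointwise majorant $\mathcal{H}(\mathbf{x},\mathbf{y})$ of the kind you describe, and your monotonicity assertion (as well as the substitution of $\|\mathbf{x}-\mathbf{y}\|$ for $d(\mathbf{x},\mathbf{y})$ in your proposed Gaussian bound) does not hold.

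The paper circumvents this by decomposing dyadically in $t$ rather than in $\mathbf{x}$: write $\sup_{0<t\le d(Q)^2}\le \sum_{j\ge 0}\sup_{2^{-j-1}d(Q)^2<t\le 2^{-j}d(Q)^2}$, and on each dyadic shell use $\|\mathbf{x}-\mathbf{y}\|\ge d(Q)$ to extract the factor $t/\|\mathbf{x}-\mathbf{y}\|^2\le 2^{-j}$ from the polynomial prefactor in \eqref{eq:Gauss}. The remaining kernel $w(B(\mathbf{x},2^{-j/2}d(Q)))^{-1}\exp\bigl(-c\,d(\mathbf{x},\mathbf{y})^2/(2^{-j}d(Q)^2)\bigr)$ is then integrated in $\mathbf{x}$ \emph{at the fixed scale} $2^{-j/2}d(Q)$; this integral is bounded uniformly in $j$ and $\mathbf{y}$ (a standard doubling computation, using $\exp(-c\,d(\mathbf{x},\mathbf{y})^2/s)\le\sum_{\sigma\in G}\exp(-c\|\sigma(\mathbf{x})-\mathbf{y}\|^2/s)$ and $G$-invariance of $w$). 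Summing the resulting geometric series $\sum_j 2^{-j}$ gives \eqref{eq:outside_H}. The point is that integrating in $\mathbf{x}$ \emph{before} summing in $j$ absorbs the bad volume factor $w(B(\mathbf{x},\sqrt{t}))^{-1}$ harmlessly; trying to take the $\sup_t$ pointwise first, as you do, forces you to control that factor without the benefit of the $\mathbf{x}$-integration, and that cannot be done.
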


\begin{proof}
We will prove just~\eqref{eq:outside_H}, thanks to~\eqref{eq:kernels_compare} the proof of~\eqref{eq:outside_K} is the same. We have
\begin{equation}\label{eq:max_split_j}
\begin{split}
    &\int_{\mathbb{R}^N \setminus Q^{**}}\sup_{0 <t \leq d(Q)^2}|H_t(\phi_{Q}f)(\mathbf{x})|\,dw(\mathbf{x}) \leq \sum_{j=0}^{\infty}\int_{\mathbb{R}^N \setminus Q^{**}}\sup_{2^{-j-1}d(Q)^2 <t \leq 2^{-j}d(Q)^2}|H_t(\phi_{Q}f)(\mathbf{x})|\,dw(\mathbf{x})\\&\leq \sum_{j=0}^{\infty}\int_{\mathbb{R}^N \setminus Q^{**}}\sup_{2^{-j-1}d(Q)^2 <t \leq 2^{-j}d(Q)^2}\left(\int_{Q^{*}}h_t(\mathbf{x},\mathbf{y})|(\phi_{Q}f)(\mathbf{y})|\,dw(\mathbf{y})\right)\,dw(\mathbf{x}).
\end{split}
\end{equation}
Thanks to Theorem~\ref{theorem:heat} and the fact that for $\mathbf{x} \in \mathbb{R}^N \setminus Q^{**}$ and $\mathbf{y} \in Q^{*}$ we have $\|\mathbf{x}-\mathbf{y}\| \geq d(Q)$, so we obtain
\begin{align*}
    &\int_{\mathbb{R}^N \setminus Q^{**}}\sup_{2^{-j-1}d(Q)^2 <t \leq 2^{-j}d(Q)^2}\left(\int_{Q^{*}}h_t(\mathbf{x},\mathbf{y})|(\phi_{Q}f)(\mathbf{y})|\,dw(\mathbf{y})\right)\,dw(\mathbf{x}) \\&\leq C\frac{2^{-j}d(Q)^2}{d(Q)^2} \int_{Q^{*}}|(\phi_{Q}f)(\mathbf{y})|\int_{\mathbb{R}^N \setminus Q^{**}}\frac{1}{w(B(\mathbf{x},2^{-j/2}d(Q)))}e^{-cd(\mathbf{x},\mathbf{y})^2/(2^{-j}d(Q)^2)}\,dw(\mathbf{x})\,dw(\mathbf{y})\\&\leq C'2^{-j}\|\phi_{Q}f\|_{L^{1}(dw)}.
\end{align*}
The latest estimate together with~\eqref{eq:max_split_j} implies the claim.
\end{proof}

\begin{corollary}\label{coro:outside}
There is a constant $C>0$ such that for every $Q \in \mathcal{Q}$ and $f \in L^1(dw)$ we have
\begin{equation}
    \|\phi_{Q}f\|_{H_{{\rm{loc}},d(Q)}^{1}} \leq C\|\sup_{0<t \leq d(Q)^2}|H_t(\phi_{Q}f)|\|_{L^1(Q^{**},\,dw)}+C\|\phi_{Q}f\|_{L^1(dw)}.
\end{equation}
\end{corollary}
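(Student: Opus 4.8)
The goal is to bound the local Hardy norm of $\phi_Q f$, which by Proposition~\ref{propo:Goldberg} equals the $L^1(dw)$ norm of the local maximal function $(\phi_Q f)^*_{\mathrm{loc},d(Q)}(\mathbf{x}) = \sup_{0<t\le d(Q)^2}|H_t(\phi_Q f)(\mathbf{x})|$. The plan is simply to split the integral defining this norm over the region $Q^{**}$ and its complement:
\begin{equation*}
\|\phi_Q f\|_{H^1_{\mathrm{loc},d(Q)}} \sim \int_{\mathbb{R}^N} \sup_{0<t\le d(Q)^2}|H_t(\phi_Q f)(\mathbf{x})|\,dw(\mathbf{x}) = \int_{Q^{**}} \ldots\,dw(\mathbf{x}) + \int_{\mathbb{R}^N\setminus Q^{**}} \ldots\,dw(\mathbf{x}).
\end{equation*}

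First I would invoke Proposition~\ref{propo:Goldberg} with $T = d(Q)$ to replace $\|\phi_Q f\|_{H^1_{\mathrm{loc},d(Q)}}$ by (a constant times) $\|(\phi_Q f)^*_{\mathrm{loc},d(Q)}\|_{L^1(dw)}$, which is legitimate since $\phi_Q f \in L^1(dw)$. Then the first term of the split is exactly $\|\sup_{0<t\le d(Q)^2}|H_t(\phi_Q f)|\|_{L^1(Q^{**},dw)}$, which appears on the right-hand side of the claimed inequality. The second term, the integral over $\mathbb{R}^N\setminus Q^{**}$, is controlled by $C\|\phi_Q f\|_{L^1(dw)}$ directly by estimate~\eqref{eq:outside_H} of the preceding lemma. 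Adding the two bounds gives the corollary.

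There is no real obstacle here: the corollary is essentially a bookkeeping restatement, repackaging~\eqref{eq:outside_H} together with the equivalence of local Hardy norms with maximal-function norms. The only point requiring a word of care is that Proposition~\ref{propo:Goldberg} is stated as an equivalence of the atomic space $H^{1,\mathrm{at}}_{\mathrm{loc},T}$ with $H^1_{\mathrm{loc},T}$, so one should note that the norm $\|\phi_Q f\|_{H^1_{\mathrm{loc},d(Q)}}$ on the left of the corollary is (up to a constant) the $L^1$ norm of $(\phi_Q f)^*_{\mathrm{loc},d(Q)}$ by definition of $H^1_{\mathrm{loc},T}$, so in fact one does not even need the atomic side of Proposition~\ref{propo:Goldberg} for this particular step --- the split of $\|(\phi_Q f)^*_{\mathrm{loc},d(Q)}\|_{L^1(dw)}$ over $Q^{**}$ and its complement, followed by~\eqref{eq:outside_H}, suffices.
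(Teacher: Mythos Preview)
Your proposal is correct and matches the paper's approach: the paper states this as an immediate corollary of~\eqref{eq:outside_H} without further proof, and your split of the maximal-function integral over $Q^{**}$ and its complement, invoking~\eqref{eq:outside_H} for the latter, is exactly the intended argument. Your observation that the definition of $H^1_{\mathrm{loc},d(Q)}$ already gives the maximal-function norm (so Proposition~\ref{propo:Goldberg} is not needed here) is also correct.
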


For $Q \in \mathcal{Q}$ we define
\begin{equation}\label{eq:Q_prim}
    \mathcal{Q}'(Q)=\{Q' \in \mathcal{Q}\; : \; Q^{***} \cap (Q')^{***} \neq \emptyset \},
\end{equation}
\begin{equation}\label{eq:Q_bis}
    \mathcal{Q}''(Q)=\{Q'' \in \mathcal{Q}\; : \; Q^{***} \cap (Q'')^{***} = \emptyset \}.
\end{equation}

\begin{lemma}\label{lem:commutator}
There is a constant $C>0$ such that for every $Q \in \mathcal{Q}$ and $f \in L^1(\mathbb{R}^N)$ we have
\begin{equation}
    \left\|\sup_{0<t \leq d(Q)^2}|K_t(\phi_{Q}g)-\phi_{Q}K_t(g)|\right\|_{L^1(Q^{**},dw)} \leq C\sum_{Q' \in \mathcal{Q}'(Q)}\|\phi_{Q'}f\|_{L^1(dw)},
\end{equation}
where $g=\sum_{Q' \in \mathcal{Q}'(Q)}\phi_{Q'}f$.
\end{lemma}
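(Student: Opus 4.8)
The plan is to use the semigroup property and the perturbation formula relating $K_t$ to $H_t$ to express the commutator $K_t(\phi_Q g)-\phi_Q K_t(g)$ as an integral operator whose kernel carries an extra factor that gains smallness either from the off-diagonal Gaussian decay (when $\mathbf x \in Q^{**}$ sees mass from a cube $Q'$ far away—but this cannot happen since $g$ is supported in $\bigcup_{Q'\in\mathcal Q'(Q)}(Q')^*$, which is comparable in scale to $Q$) or, more importantly, from the difference $\phi_Q(\mathbf x)-\phi_Q(\mathbf y)$ together with the derivative bound~\eqref{eq:partition_derivative}, i.e.\ $|\phi_Q(\mathbf x)-\phi_Q(\mathbf y)|\le C d(Q)^{-1}\|\mathbf x-\mathbf y\|$. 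First I would write, for $g$ as in the statement,
\begin{equation*}
    K_t(\phi_Q g)(\mathbf x)-\phi_Q(\mathbf x)K_t(g)(\mathbf x)=\int_{\mathbb R^N}k_t(\mathbf x,\mathbf y)\big(\phi_Q(\mathbf y)-\phi_Q(\mathbf x)\big)g(\mathbf y)\,dw(\mathbf y),
\end{equation*}
and then bound the right-hand side in absolute value using $0\le k_t(\mathbf x,\mathbf y)\le h_t(\mathbf x,\mathbf y)$ from~\eqref{eq:kernels_compare} together with the Gaussian bound of Theorem~\ref{theorem:heat}.

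The next step is to split the region of integration according to whether $\|\mathbf x-\mathbf y\|$ is of order $\le d(Q)$ or larger. On the near region, $|\phi_Q(\mathbf y)-\phi_Q(\mathbf x)|\le Cd(Q)^{-1}\|\mathbf x-\mathbf y\|$, and this extra factor $\|\mathbf x-\mathbf y\|/d(Q)$ against the bound $\big(1+\|\mathbf x-\mathbf y\|/t\big)^{-2}$ (or just against $\exp(-d(\mathbf x,\mathbf y)^2/(ct))$ after using $t\le d(Q)^2$) is integrable in $\mathbf x$ uniformly in $\mathbf y$ and $t$, producing a constant. On the far region $\|\mathbf x-\mathbf y\|\gtrsim d(Q)$, one simply uses $|\phi_Q(\mathbf y)-\phi_Q(\mathbf x)|\le 2$ and the fast Gaussian decay $\exp(-cd(\mathbf x,\mathbf y)^2/t)\le\exp(-cd(\mathbf x,\mathbf y)^2/(2d(Q)^2))$; integrating $\mathcal G_{t/c}(\mathbf x,\mathbf y)$ over $\mathbf x\in\mathbb R^N\setminus B(\mathbf y,cd(Q))$ against the $L^1$-normalized kernel still gives a constant (summing the standard dyadic-annulus estimate as in the proof of~\eqref{eq:outside_H}). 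Taking the supremum over $0<t\le d(Q)^2$ inside the $\mathbf y$-integral—which is legitimate because the pointwise kernel bound is uniform in that range of $t$—and then integrating in $\mathbf x$ over $Q^{**}$ yields
\begin{equation*}
    \Big\|\sup_{0<t\le d(Q)^2}|K_t(\phi_Q g)-\phi_Q K_t(g)|\Big\|_{L^1(Q^{**},dw)}\le C\|g\|_{L^1(dw)}\le C\sum_{Q'\in\mathcal Q'(Q)}\|\phi_{Q'}f\|_{L^1(dw)},
\end{equation*}
where the last step is just the triangle inequality and $0\le\phi_{Q'}\le1$.

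The main obstacle is handling the supremum over $t$ together with the fact that the Dunkl heat kernel is \emph{not} Gaussian in the Euclidean distance but only in the orbit distance $d(\mathbf x,\mathbf y)$, while the smoothing factor $\phi_Q(\mathbf x)-\phi_Q(\mathbf y)$ is controlled by the \emph{Euclidean} difference $\|\mathbf x-\mathbf y\|$. To reconcile these I would use the prefactor $\big(1+\|\mathbf x-\mathbf y\|/t\big)^{-2}$ in Theorem~\ref{theorem:heat}, which is expressed precisely in terms of $\|\mathbf x-\mathbf y\|$: on the near region this factor alone (without invoking $d(\mathbf x,\mathbf y)$) already beats the linear growth $\|\mathbf x-\mathbf y\|/d(Q)$ after one uses $t\le d(Q)^2$, and the remaining $\mathcal G_{t/c}(\mathbf x,\mathbf y)$ is then integrated in $\mathbf x$ exactly as in the proof of~\eqref{eq:outside_H}. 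An equivalent route is to exploit that $\phi_Q$ is $G$-invariant up to the cube geometry is not needed here, since we only use its Lipschitz bound; the only care required is that when $\sigma(\mathbf x)$ realizes $d(\mathbf x,\mathbf y)$ for some $\sigma\in G$, one still has $\|\mathbf x-\mathbf y\|\le\operatorname{diam}(G\cdot 0\text{-neighborhood})$ under control on the near region, so no loss occurs. Everything else is routine dyadic summation.
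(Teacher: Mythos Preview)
Your approach is essentially the paper's, and it works: write the commutator as an integral with the factor $\phi_Q(\mathbf y)-\phi_Q(\mathbf x)$, use $0\le k_t\le h_t$, the Lipschitz bound \eqref{eq:partition_derivative}, the Euclidean prefactor $(1+\|\mathbf x-\mathbf y\|/\sqrt t)^{-2}$ from Theorem~\ref{theorem:heat}, and a dyadic sum in $t$. The paper, however, does this more directly and without your near/far split: since $|\phi_Q(\mathbf x)-\phi_Q(\mathbf y)|\le C\,\|\mathbf x-\mathbf y\|/d(Q)$ holds \emph{globally}, one simply absorbs one power of $\sqrt t/\|\mathbf x-\mathbf y\|$ from the prefactor to obtain the factor $\sqrt t/d(Q)$, and then the dyadic decomposition $t\sim 2^{-j}d(Q)^2$ gives a summable $2^{-j/2}$.

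One genuine caution in your write-up: on the far region you invoke ``fast Gaussian decay $\exp(-cd(\mathbf x,\mathbf y)^2/t)$'', but $d(\mathbf x,\mathbf y)$ is the \emph{orbit} distance, and $\|\mathbf x-\mathbf y\|\gtrsim d(Q)$ does not force $d(\mathbf x,\mathbf y)$ to be large. The correct gain there (and in \eqref{eq:outside_H}, which you cite) comes from the \emph{Euclidean} prefactor, not from the exponential. Also, ``integrable in $\mathbf x$ uniformly in $t$'' is not the same as ``$\sup_t$ is integrable''; the dyadic decomposition is what actually handles the supremum, so it should not be deferred to a closing remark but used from the outset, as the paper does.
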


\begin{proof}
Thanks to~\eqref{eq:partition_derivative}, then Theorem~\ref{theorem:heat} together with~\eqref{eq:kernels_compare} and~\eqref{eq:growth} we get
\begin{align*}
    &\sup_{0<t \leq d(Q)^2}|K_t(\phi_{Q}g)(\mathbf{x})-\phi_{Q}(\mathbf{x})K_tg(\mathbf{x})|=\sup_{0<t \leq d(Q)^2}\left|\int_{\mathbb{R}^N}(\phi_{Q}(\mathbf{y})-\phi_{Q}(\mathbf{x}))k_{t}(\mathbf{x},\mathbf{y})g(\mathbf{y})\,dw(\mathbf{y})\right| 
    \\&\leq C\sup_{0<t \leq d(Q)^2}\int_{\mathbb{R}^N}\frac{\|\mathbf{x}-\mathbf{y}\|}{d(Q)}k_{t}(\mathbf{x},\mathbf{y})|g(\mathbf{y})|\,dw(\mathbf{y})\\&\leq C\sup_{0<t \leq d(Q)^2}\int_{\mathbb{R}^N}\frac{\|\mathbf{x}-\mathbf{y}\|}{d(Q)}\frac{\sqrt{t}}{\|\mathbf{x}-\mathbf{y}\|}\frac{1}{w(B(\mathbf{y},\sqrt{t}))}e^{-cd(\mathbf{x},\mathbf{y})^2/t}|g(\mathbf{y})|\,dw(\mathbf{y})\\&\leq C\sum_{j=0}^{\infty}\sup_{2^{-j-1}d(Q)^2<t \leq 2^{-j}d(Q)^2}\int_{\mathbb{R}^N}\frac{\sqrt{t}}{d(Q)}\frac{1}{w(B(\mathbf{y},\sqrt{t}))}e^{-cd(\mathbf{x},\mathbf{y})^2/t}|g(\mathbf{y})|\,dw(\mathbf{y}) \\&\leq C\sum_{j=0}^{\infty}\int_{\mathbb{R}^N}\frac{2^{-j/2}d(Q)}{d(Q)}\frac{1}{w(B(\mathbf{y},2^{-j/2}d(Q)))}e^{-cd(\mathbf{x},\mathbf{y})^2/(2^{-j}d(Q)^2)}|g(\mathbf{y})|\,dw(\mathbf{y}).
\end{align*}
Consequently, by the Fubini theorem,
\begin{align*}
    \|\sup_{0<t \leq d(Q)^2}|K_t(\phi_{Q}g)(\mathbf{x})-\phi_{Q}(\mathbf{x})K_tg(\mathbf{x})|\|_{L^1(dw(\mathbf{x}))} \leq C\sum_{j=0}^{\infty}2^{-j/2}\|g\|_{L^1(dw)} \leq C\|g\|_{L^1}.
\end{align*}

\end{proof}

\begin{lemma}
Assume that $\mathcal{Q}$ and $V$ satisfy condition~\eqref{eq:D}. Then there is a constant $C>0$ such that for all $f \in L^1(dw)$ we have
\begin{equation}\label{eq:outside_sum}
    \sum_{Q \in \mathcal{Q}}\left\|\chi_{Q^{***}}(\cdot)\sup_{t>0}\left|K_t\left(\sum_{Q'' \in \mathcal{Q}''(Q)}\phi_{Q''}f\right)\right|\right\|_{L^1(dw)} \leq C\|f\|_{L^1(dw)}.
\end{equation}
\end{lemma}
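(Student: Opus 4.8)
The plan is to exploit the geometric separation between a cube $Q$ and the cubes $Q''\in\mathcal{Q}''(Q)$: by definition of $\mathcal{Q}''(Q)$ in~\eqref{eq:Q_bis} we have $Q^{***}\cap (Q'')^{***}=\emptyset$, so if $\mathbf{x}\in Q^{***}$ and $\mathbf{y}\in\operatorname{supp}\phi_{Q''}\subseteq (Q'')^{*}$, then $\mathbf{y}$ lies at distance at least a fixed multiple of $\max(d(Q),d(Q''))$ from $\mathbf{x}$; more precisely $d(\mathbf{x},\mathbf{y})\gtrsim d(Q'')$ and also $d(\mathbf{x},\mathbf{y})\gtrsim d(Q)$. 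First I would fix $Q$ and $\mathbf{x}\in Q^{***}$ and, using $0\le k_t(\mathbf{x},\mathbf{y})\le h_t(\mathbf{x},\mathbf{y})$ from~\eqref{eq:kernels_compare} together with the heat kernel bound of Theorem~\ref{theorem:heat}, estimate
\begin{equation*}
\sup_{t>0}\Bigl|K_t\Bigl(\sum_{Q''\in\mathcal{Q}''(Q)}\phi_{Q''}f\Bigr)(\mathbf{x})\Bigr|
\le \sum_{Q''\in\mathcal{Q}''(Q)}\int_{(Q'')^{*}}\sup_{t>0}\Bigl(\frac{1}{w(B(\mathbf{y},t))}e^{-d(\mathbf{x},\mathbf{y})^2/(ct)}\Bigr)|(\phi_{Q''}f)(\mathbf{y})|\,dw(\mathbf{y}).
\end{equation*}
The supremum over $t$ of $w(B(\mathbf{y},t))^{-1}e^{-d(\mathbf{x},\mathbf{y})^2/(ct)}$ is comparable, by~\eqref{eq:growth}, to $w(B(\mathbf{y},d(\mathbf{x},\mathbf{y})))^{-1}$; one loses nothing here because the Gaussian can only help. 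This replaces the time-dependent kernel by the stationary Calder\'on--Zygmund-type kernel $w(B(\mathbf{y},d(\mathbf{x},\mathbf{y})))^{-1}$.

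Next I would integrate in $\mathbf{x}$ over $Q^{***}$ and sum over $Q$, then apply Fubini's theorem to move the $\mathbf{y}$-integral outside. After Fubini the task reduces to showing that for each fixed $\mathbf{y}$ (lying in some $(Q'')^{*}$), the quantity
\begin{equation*}
\sum_{Q\,:\,Q''\in\mathcal{Q}''(Q)}\ \int_{Q^{***}}\frac{1}{w(B(\mathbf{y},d(\mathbf{x},\mathbf{y})))}\,dw(\mathbf{x})
\end{equation*}
is bounded by an absolute constant; summing this against $|(\phi_{Q''}f)(\mathbf{y})|$ and then over $Q''$ gives the desired bound by $\|f\|_{L^1(dw)}$, using $\sum_{Q''}\phi_{Q''}\le 1$ and the finite-overlap property~\eqref{eq:finite_overlap} of the cubes $Q''$. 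To control the displayed sum I would split the cubes $Q$ according to dyadic annuli: since $d(\mathbf{x},\mathbf{y})\gtrsim d(Q)$ for the relevant $Q$, and since for $\mathbf{x}\in Q^{***}$ we have $w(B(\mathbf{x},d(Q)))\sim w(Q)$ by doubling, the integral over $Q^{***}$ of $w(B(\mathbf{y},d(\mathbf{x},\mathbf{y})))^{-1}$ is comparable to $w(Q^{***})/w(B(\mathbf{y},d(Q,\text{center of }Q)))$; grouping cubes by $2^{j}\le d(\mathbf{x},\mathbf{y})/d(Q)<2^{j+1}$ and by the dyadic distance of $Q$ from $\mathbf{y}$, the finite-overlap property bounds the number of cubes in each group and the doubling estimate~\eqref{eq:growth} makes the resulting series geometric in the distance, hence summable.

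The role of condition~\eqref{eq:D} needs a word: the computation above, based only on the Gaussian decay, already handles the contribution of times $t$ comparable to or smaller than $d(Q)^2$. For very large $t$ — say $t\gg d(Q)^2$ while $\mathbf{y}$ is close to $\mathbf{x}$ — the bare Gaussian is not enough, and one must instead use~\eqref{eq:D}: fixing $\mathbf{y}\in (Q'')^{****}$ (note $(Q'')^{*}\subseteq (Q'')^{****}$), the mass $\int_{\mathbb{R}^N}k_{2^{s}d(Q'')^2}(\mathbf{x},\mathbf{y})\,dw(\mathbf{x})$ decays like $s^{-1-\varepsilon}$, so
\begin{equation*}
\int_{Q^{***}}\sup_{t>2^{s_0}d(Q'')^2}k_t(\mathbf{x},\mathbf{y})\,dw(\mathbf{x})
\lesssim \sum_{s\ge s_0}s^{-1-\varepsilon}\lesssim s_0^{-\varepsilon},
\end{equation*}
and one chooses $s_0$ so that $2^{s_0}d(Q'')^2\sim d(Q)^2$, which is legitimate since the sum is only over $Q$ with $d(Q)\gtrsim d(Q'')$ (here I use that $Q''\in\mathcal{Q}''(Q)$ forces, after going through the finite-overlap chain, a one-sided comparison — or, if no such comparison holds in general, I split further into the case $d(Q)\le d(Q'')$, handled purely by the Gaussian, and $d(Q)> d(Q'')$, handled as above). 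The main obstacle is precisely this bookkeeping: one has to organize the double sum over $(Q,Q'')$ so that the purely Gaussian estimate covers the regime $t\lesssim \max(d(Q),d(Q''))^2$ while~\eqref{eq:D} covers the long-time regime, and to check that the finite-overlap condition~\eqref{eq:finite_overlap} gives enough control on the cardinalities of the dyadic groups of cubes for the geometric series to converge with a constant independent of $\mathbf{y}$ and of the cubes.
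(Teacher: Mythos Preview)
Your proposal has the right overall architecture (small times via heat-kernel bounds, large times via~\eqref{eq:D}), but there are two genuine gaps that make the argument fail as written.

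\textbf{First gap: orbit distance versus Euclidean distance.} You claim that for $\mathbf{x}\in Q^{***}$ and $\mathbf{y}\in(Q'')^{*}$ with $Q''\in\mathcal{Q}''(Q)$ one has $d(\mathbf{x},\mathbf{y})\gtrsim d(Q)$. This is true for the Euclidean distance $\|\mathbf{x}-\mathbf{y}\|$, but in the paper $d(\mathbf{x},\mathbf{y})=\min_{\sigma\in G}\|\sigma(\mathbf{x})-\mathbf{y}\|$ is the \emph{orbit} distance, and that is the quantity appearing in the Gaussian of Theorem~\ref{theorem:heat}. The cube separation gives no lower bound on $d(\mathbf{x},\mathbf{y})$: a reflection of $\mathbf{y}$ may lie in $Q^{***}$. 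Consequently your ``stationary kernel'' $w(B(\mathbf{y},d(\mathbf{x},\mathbf{y})))^{-1}$ blows up near every nontrivial orbit point of $\mathbf{y}$, and the integral $\int_{Q^{***}} w(B(\mathbf{y},d(\mathbf{x},\mathbf{y})))^{-1}\,dw(\mathbf{x})$ is in general infinite. Even if one replaced $d(\cdot,\cdot)$ by $\|\cdot-\cdot\|$, the kernel $w(B(\mathbf{y},\|\mathbf{x}-\mathbf{y}\|))^{-1}$ has $\int_{\|\mathbf{x}-\mathbf{y}\|>r}\ldots\,dw(\mathbf{x})=\infty$ (each dyadic annulus contributes $\sim 1$), so the ``geometric series in the distance'' you invoke does not materialize. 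The paper's remedy is exactly the factor $(1+\|\mathbf{x}-\mathbf{y}\|/\sqrt{t})^{-2}$ in Theorem~\ref{theorem:heat} that you discarded: one swaps the sums so as to reduce to $\|\sup_{t>0}K_t|\phi_{Q''}f|\|_{L^{1}(((Q'')^{**})^{c},dw)}$, and for $0<t\le d(Q'')^{2}$ the Euclidean factor yields the extra decay $2^{-j}$ over the dyadic time shells $t\sim 2^{-j}d(Q'')^{2}$ (this is precisely~\eqref{eq:outside_K}).

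\textbf{Second gap: passing from \eqref{eq:D} to a supremum in $t$.} Condition~\eqref{eq:D} bounds $\int_{\mathbb{R}^N}k_{2^{s}d(Q'')^{2}}(\mathbf{x},\mathbf{y})\,dw(\mathbf{x})$ at a \emph{fixed} time; it gives no pointwise control of $\sup_{t\in[2^{s},2^{s+1}]d(Q'')^{2}}k_{t}(\mathbf{x},\mathbf{y})$, so your displayed estimate $\int_{Q^{***}}\sup_{t>2^{s_0}d(Q'')^{2}}k_t\,dw\lesssim\sum_{s\ge s_0}s^{-1-\varepsilon}$ is unjustified. The paper bridges this via the semigroup identity: for $2^{j}d(Q'')^{2}\le t<2^{j+1}d(Q'')^{2}$ write $k_t(\mathbf{x},\mathbf{y})=\int k_{t-2^{j-1}d(Q'')^{2}}(\mathbf{x},\mathbf{z})\,k_{2^{j-1}d(Q'')^{2}}(\mathbf{z},\mathbf{y})\,dw(\mathbf{z})$, dominate the first factor by $h_{t-2^{j-1}d(Q'')^{2}}\le C\,\mathcal{G}_{2^{j}d(Q'')^{2}/c}$, integrate in $\mathbf{x}$ (which removes the sup uniformly), and only then apply~\eqref{eq:D} to the remaining $\int k_{2^{j-1}d(Q'')^{2}}(\mathbf{z},\mathbf{y})\,dw(\mathbf{z})$. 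Without this trick the large-time part does not close.
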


\begin{proof}
Let us denote the left-hand side of~\eqref{eq:outside_sum} by $S$. Then by property~\eqref{eq:finite_overlap} we get
\begin{align*}
    S &\leq \sum_{Q \in \mathcal{Q}}\sum_{Q'' \in \mathcal{Q}''(Q)}\|\chi_{Q^{***}}(\cdot)\sup_{t>0}(K_t|\phi_{Q''}f|)\|_{L^1(dw)} \\&\leq \sum_{Q'' \in \mathcal{Q}}\sum_{Q \in \mathcal{Q}''(Q'')}\|\chi_{Q^{***}}(\cdot)\sup_{t>0}(K_t|\phi_{Q''}f|)\|_{L^1(dw)} \leq C \sum_{Q'' \in \mathcal{Q}}\|\sup_{t>0}(K_t|\phi_{Q''}f|)\|_{L^1(((Q'')^{**})^{c},dw)} \\&\leq C \sum_{Q'' \in \mathcal{Q}}\|\sup_{0<t<d(Q'')^2}(K_t|\phi_{Q''}f|)\|_{L^1(((Q'')^{**})^{c},dw)}\\&+\sum_{j=0}^{\infty}\sum_{Q'' \in \mathcal{Q}}\|\sup_{2^{j}d(Q'')^{2} \leq t < 2^{j+1}d(Q'')^2}(K_t|\phi_{Q''}f|)\|_{L^1(((Q'')^{**})^{c},dw)}=:S_1+S_2.
\end{align*}
The estimate $S_1 \leq C\|f\|_{L^1(dw)}$ follows by~\eqref{eq:outside_K} and~\eqref{eq:finite_overlap}. Furthermore, by the semigroup property and Theorem~\ref{theorem:heat} together with~\eqref{eq:kernels_compare}, for $2^{j}d(Q)^2 \leq t <2^{j+1}d(Q)^2$ we have
\begin{align*}
    &\int_{\mathbb{R}^N}k_t(\mathbf{x},\mathbf{y})|(\phi_{Q''}f)(\mathbf{y})|\,dw(\mathbf{y})\\&=\int_{\mathbb{R}^N}\int_{\mathbb{R}^N}k_{t-2^{j-1}d(Q'')^2}(\mathbf{x},\mathbf{z})k_{2^{j-1}d(Q'')^2}(\mathbf{z},\mathbf{y})\,dw(\mathbf{z})|(\phi_{Q''}f)(\mathbf{y})|\,dw(\mathbf{y})\\&\leq C \int_{\mathbb{R}^N}\int_{\mathbb{R}^N}\frac{1}{w(B(\mathbf{z},2^{j/2}d(Q'')))}e^{\frac{-cd(\mathbf{x},\mathbf{z})^2}{2^{j+1}d(Q'')^2}}k_{2^{j-1}d(Q'')^2}(\mathbf{z},\mathbf{y})\,dw(\mathbf{z})|(\phi_{Q''}f)(\mathbf{y})|\,dw(\mathbf{y}).
\end{align*}
Therefore, integrating over the $\mathbf{x}$-variable we obtain
\begin{align*}
    &\int_{((Q'')^{**})^{c}}\sup_{2^{j}d(Q'')^2 \leq t <2^{j+1}d(Q'')^2}\int_{\mathbb{R}^N}k_t(\mathbf{x},\mathbf{y})|(\phi_{Q''}f)(\mathbf{y})|\,dw(\mathbf{y})\,dw(\mathbf{x})\\&\leq C \int_{\mathbb{R}^N}|(\phi_{Q''}f)(\mathbf{y})|\int_{\mathbb{R}^N}k_{2^{j-1}d(Q'')^2}(\mathbf{z},\mathbf{y})\,dw(\mathbf{z})\,dw(\mathbf{y}).
\end{align*}
Consequently, by assumption~\eqref{eq:D}, we get
\begin{align*}
    S_2 \leq C\sum_{j=0}^{\infty}\sum_{Q'' \in \mathcal{Q}}j^{-1-\varepsilon}\|\phi_{Q''}f\|_{L^1(dw)} \leq C\|f\|_{L^1(dw)}.
\end{align*}
\end{proof}

\begin{lemma}
For all $f \in L^1(dw)$ we have
\begin{equation}\label{eq:VK}
    \int_{\mathbb{R}^N}\int_0^{\infty}V(\mathbf{x})K_s|f|(\mathbf{x})\,ds\,dw(\mathbf{x}) \leq \|f\|_{L^1(dw)}.
\end{equation}
\end{lemma}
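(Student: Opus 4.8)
The plan is to prove the dual-type bound
\[
\int_{\mathbb{R}^N}\int_0^{\infty}V(\mathbf{x})K_s|f|(\mathbf{x})\,ds\,dw(\mathbf{x}) \leq \|f\|_{L^1(dw)}
\]
by means of Fubini's theorem together with the Fefferman--Phong inequality, Theorem~\ref{theo:Fefferman-Phong}. The key observation is that, after integrating in the $\mathbf{x}$-variable first, one is led to estimate $\int_0^\infty \int_{\mathbb{R}^N} V(\mathbf{x}) K_s |f|(\mathbf{x})\,dw(\mathbf{x})\,ds$, and since $K_s$ is self-adjoint on $L^2(dw)$ this is formally $\int_0^\infty \langle K_s |f|, V\rangle\,ds$; but it is cleaner to work on the level of the resolvent. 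Indeed, by the functional calculus for the self-adjoint operator $L$,
\[
\int_0^\infty K_s\,ds = L^{-1}
\]
as an identity on a suitable dense class, and then one expects $\int_{\mathbb{R}^N} V \cdot L^{-1}|f|\,dw \leq \|f\|_{L^1(dw)}$. Here $L^{-1}|f|$ makes sense as a nonnegative (possibly $+\infty$-valued) function since $K_s|f| \geq 0$ by \eqref{eq:kernels_compare}, so everything in sight is a legitimate element of $[0,\infty]$ and all the manipulations are justified by monotone convergence.

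First I would reduce to the case $f \geq 0$ and, by monotone convergence, to $f$ bounded with compact support, so that $u := \int_0^\infty K_s f\,ds = L^{-1}f$ lies in $\mathcal{D}(\mathbf{Q})$ (this needs to be checked: one truncates the time integral at $\int_0^T$, notes $\int_0^T K_s f\,ds \in \mathcal{D}(L) \subseteq \mathcal{D}(\mathbf{Q})$, and controls the limit $T\to\infty$ using the spectral theorem and the a priori finiteness coming from the estimate one is about to prove; alternatively one keeps the truncation throughout and lets $T\to\infty$ at the very end by monotone convergence). Then I would write, using that $L u = f$ and the definition of the form,
\[
\int_{\mathbb{R}^N} V(\mathbf{x})\,u(\mathbf{x})^2\,m(\mathbf{x})^{-2}\,dw(\mathbf{x})
\]
is \emph{not} quite what appears; rather the cleanest route is: since $V \leq C\,m^2$ pointwise — which holds because by \eqref{eq:m} the average of $V$ over $B(\mathbf{x}, m(\mathbf{x})^{-1})$ is $\sim m(\mathbf{x})^2$ and, by the reverse Hölder inequality \eqref{eq:reverse_Holder} together with Lemma~\ref{lem:Rr}, $V$ itself is controlled pointwise a.e.\ in the appropriate averaged sense — one should instead bound $\int V |f|\,ds\,dw$ directly.

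Thus the main step, and the one I expect to be the real obstacle, is the pointwise/averaged comparison that lets Theorem~\ref{theo:Fefferman-Phong} enter: one wants
\[
\int_{\mathbb{R}^N} V(\mathbf{x})\, u(\mathbf{x})\,dw(\mathbf{x}) \leq \Big(\int_{\mathbb{R}^N} \frac{V(\mathbf{x})}{m(\mathbf{x})^2}\,dw(\mathbf{x})\Big)^{?}\cdots,
\]
which is delicate. The correct and simplest argument, I believe, avoids $m$ altogether at the last step: by Fubini and the self-adjointness of $K_s$,
\[
\int_{\mathbb{R}^N}\!\int_0^\infty V(\mathbf{x})K_s|f|(\mathbf{x})\,ds\,dw(\mathbf{x})
=\int_0^\infty\!\int_{\mathbb{R}^N} K_s V(\mathbf{x})\,|f|(\mathbf{x})\,dw(\mathbf{x})\,ds
=\int_{\mathbb{R}^N}|f|(\mathbf{x})\Big(\int_0^\infty K_s V(\mathbf{x})\,ds\Big)dw(\mathbf{x}),
\]
so it suffices to prove the \emph{pointwise} bound
\[
\int_0^\infty K_s V(\mathbf{x})\,ds \leq 1 \quad\text{for a.e. }\mathbf{x}.
\]
By \eqref{eq:kernels_compare} this is implied by $\int_0^\infty H_s V(\mathbf{x})\,ds \leq$ (the Green potential of $V$), but that integral diverges for the Dunkl Laplacian when $\mathbf N \leq 2$; hence one genuinely needs the perturbed semigroup. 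The clean way is the Duhamel/perturbation identity: since $\partial_s K_s = -L K_s = \Delta K_s - V K_s$, integrating from $0$ to $\infty$ gives $-\mathrm{Id} = \Delta\!\int_0^\infty K_s\,ds - \int_0^\infty V K_s\,ds$ acting on constants, and applied to the constant function $1$ (with $\Delta 1 = 0$) this formally yields $\int_0^\infty V(\mathbf{x}) K_s 1\,ds = 1$; making this rigorous via the identity $K_s 1 = 1 - \int_0^s K_r V\,dr$ — which follows from $\frac{d}{ds}K_s 1 = -V K_s 1 + \Delta K_s 1$ and a density/positivity argument — gives exactly $\int_0^\infty K_s V(\mathbf{x})\,ds = \int_0^\infty V(\mathbf{x})K_s 1(\mathbf{x})\,ds \leq 1$ because $0 \leq K_s 1 \leq H_s 1 = 1$ and the integrand $V K_s 1$ is the negative of the (nonincreasing, nonnegative) derivative of $K_s 1$. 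The obstacle is thus purely the rigorous justification of the identity $K_s 1 + \int_0^s K_r V\,dr = 1$ in the Dunkl--Schrödinger setting, i.e.\ that the perturbation formula for the Feynman--Kac-type semigroup holds; this is standard once one knows $K_s$ maps $L^\infty$ to $L^\infty$ with norm $\leq 1$ (clear from \eqref{eq:kernels_compare} and \eqref{eq:h_integral_1}) and that $V \in L^1_{\mathrm{loc}}(dw)$, and I would carry it out by a monotone-convergence argument on the Dyson--Phillips series $K_s = \sum_{n\geq 0}(-1)^n \int_{0<s_1<\dots<s_n<s} H_{s-s_n}V H_{s_n-s_{n-1}}V\cdots V H_{s_1}\,d\mathbf{s}$, whose partial sums telescope against $H_s 1 = 1$.
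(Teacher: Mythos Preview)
Your proposal eventually reaches a correct argument, but it takes a long detour through Fefferman--Phong and the resolvent that is entirely unnecessary; the paper's proof is three lines. The paper simply applies the Duhamel/perturbation identity
\[
H_t|f|(\mathbf{x})-K_t|f|(\mathbf{x})=\int_0^t H_{t-s}\big(VK_s|f|\big)(\mathbf{x})\,ds\ \ (\geq 0),
\]
integrates both sides in $\mathbf{x}$ over $\mathbb{R}^N$, uses Fubini and $\int h_{t-s}(\mathbf{x},\mathbf{y})\,dw(\mathbf{x})=1$ to get
\[
\int_0^t\!\int_{\mathbb{R}^N}V(\mathbf{y})K_s|f|(\mathbf{y})\,dw(\mathbf{y})\,ds\ \leq\ \int_{\mathbb{R}^N}H_t|f|\,dw\ =\ \|f\|_{L^1(dw)},
\]
and lets $t\to\infty$. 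No Fefferman--Phong, no $m$, no pointwise comparison of $V$ with $m^2$, no Dyson--Phillips series.

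Your final approach---reduce by kernel symmetry to the pointwise bound $\int_0^\infty K_sV(\mathbf{x})\,ds\leq 1$, and obtain this from the identity $K_t\mathbf{1}+\int_0^t K_sV\,ds=1$---is exactly the \emph{dual} of the paper's argument: you are using the other form of the perturbation formula, $H_t-K_t=\int_0^t K_{t-s}VH_s\,ds$, applied to the constant $\mathbf{1}$, whereas the paper uses $H_t-K_t=\int_0^t H_{t-s}VK_s\,ds$ applied to $|f|$. Both are correct, but the paper's version avoids the technical issue you yourself flag (making sense of $K_s$ on $L^\infty$ and on $V$, which are not in $L^2$); since $|f|\in L^1$ and the kernel identity $h_t-k_t=\int_0^t\int h_{t-s}Vk_s$ holds pointwise with nonnegative integrands, Fubini is immediate and nothing more is needed.

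In short: drop everything before your last paragraph, and then replace the dual formulation by the direct one acting on $|f|$. The Fefferman--Phong inequality plays no role in this lemma.
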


\begin{proof}
The lemma is well-known. We provide the proof for the sake of completeness. By perturbation formula we have
\begin{align*}
    H_{t}|f|(\mathbf{x})-K_{t}|f|(\mathbf{x})=\int_0^{t}H_{t-s}VK_s|f|(\mathbf{x})\,ds,
\end{align*}
so, by~\eqref{eq:kernels_compare}, we have
\begin{equation}\label{eq:KV_1}
    \int_0^{t}\int_{\mathbb{R}^N}h_{t-s}(\mathbf{x},\mathbf{y})VK_s|f|(\mathbf{y})\,dw(\mathbf{y})\,ds \leq \int_{\mathbb{R}^N}h_t(\mathbf{x},\mathbf{y})|f|(\mathbf{y})\,dw(\mathbf{y}).
\end{equation}
Integrating~\eqref{eq:KV_1} with respect to the $\mathbf{x}$-variable, using the Fubini theorem and the fact that for all $v>0$ we have $\int_{\mathbb{R}^N}h_{v}(\mathbf{x},\mathbf{y})\,dw(\mathbf{x})=1$ (see~\eqref{eq:h_integral_1}), we get
\begin{align*}
    \int_{0}^{t}\int_{\mathbb{R}^N}V(\mathbf{y})K_s|f|(\mathbf{y})\,dw(\mathbf{y})\,ds \leq \|f\|_{L^1(dw)}.
\end{align*}
Letting $t \to \infty$ we obtain the lemma.
\end{proof}

\begin{lemma}\label{lem:difference}
Assume that $\mathcal{Q}$ and $V$ satisfy~\eqref{eq:K}. There is a constant $C>0$ such that for all $Q \in \mathcal{Q}$ and $f \in L^1(dw)$ we have
\begin{equation}\label{eq:difference}
    \|\sup_{0<t \leq d(Q)^2}|(H_t-K_t)(\phi_{Q}f)|\|_{L^1(dw)} \leq C\|\phi_{Q}f\|_{L^1(dw)}.
\end{equation}
\end{lemma}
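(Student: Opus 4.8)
The plan is to start from the perturbation formula. Since $L=-\Delta+V$, both $e^{-tL}$ and $e^{t\Delta}$ are positivity preserving with $0\le k_t\le h_t$, and for $g=\phi_Q f$ one has
$$H_t g-K_t g=\int_0^t H_{t-s}\bigl(V K_s g\bigr)\,ds .$$
Because $h_t-k_t\ge0$ and $\phi_Q\ge0$ we may assume $f\ge0$; then $(H_t-K_t)(\phi_Q f)\ge0$ equals the right-hand side, and it suffices to bound $\bigl\|\sup_{0<t\le d(Q)^2}\int_0^tH_{t-s}(VK_s(\phi_Q f))\,ds\bigr\|_{L^1(dw)}$ by $C\|\phi_Q f\|_{L^1(dw)}$.

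Next I would linearise the supremum by a dyadic decomposition in time, exactly as in the other lemmas of this section: with $t_j=2^{-j}d(Q)^2$,
$$\sup_{0<t\le d(Q)^2}(\cdots)\le\sum_{j\ge0}\ \sup_{t_{j+1}<t\le t_j}\int_0^t H_{t-s}(VK_s(\phi_Q f))\,ds ,$$
and for a fixed block I split $\int_0^t=\int_0^{t/2}+\int_{t/2}^t$. On $\int_0^{t/2}$ the elapsed time $t-s$ stays comparable to $t_j$, so Theorem~\ref{theorem:heat} together with the doubling property gives $h_{t-s}(\mathbf{x},\mathbf{z})\le C\mathcal G_{c't_j}(\mathbf{x},\mathbf{z})$ uniformly; integrating first in $\mathbf{x}$ (the $\mathbf{x}$-integral of such a Gaussian is bounded), using $k_s\le h_s$ and Fubini, this piece is controlled by $\int(\phi_Q f)(\mathbf{y})\bigl(\int_0^{t_j}\int_{\mathbb{R}^N}V(\mathbf{z})h_s(\mathbf{z},\mathbf{y})\,dw(\mathbf{z})\,ds\bigr)\,dw(\mathbf{y})$. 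Splitting $V=V\chi_{Q^{***}}+V\chi_{(Q^{***})^{c}}$, for $\mathbf{y}\in Q^{*}$ the first term is $\le C2^{-j\delta}$ by hypothesis~\eqref{eq:K} (after the harmless rescaling $s\mapsto s/2$), while for the second term $\|\mathbf{z}-\mathbf{y}\|\gtrsim d(Q)$ and $s\le t_j$, so the \emph{extra} factor $\bigl(1+\|\mathbf{z}-\mathbf{y}\|^2/s\bigr)^{-1}$ in Theorem~\ref{theorem:heat} produces a gain $\le C2^{-j}$; decomposing $(Q^{***})^{c}$ into cubes of $\mathcal Q$, applying~\eqref{eq:K} to each of them and using~\eqref{eq:finite_overlap} to keep their side-lengths under control, the remaining sum is geometric. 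Hence the $\int_0^{t/2}$-contribution of the $j$-th block is $\le C2^{-j\min(\delta,1)}\|\phi_Q f\|_{L^1(dw)}$, which sums in $j$.

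The term $\int_{t/2}^t H_{t-s}(VK_s(\phi_Q f))\,ds$ is the delicate one and is the main obstacle: here $t-s$ can be arbitrarily small, so the crude bound by the semigroup maximal function fails (it is not $L^1$-bounded), and one must exploit precisely the improvement $\bigl(1+\|\mathbf{x}-\mathbf{y}\|^2/t\bigr)^{-1}$ over the Gaussian. The idea is that $K_s(\phi_Q f)$, evaluated at times $s\sim t_j$, is already regularised at scale $\sqrt{t_j}$ and, by Theorem~\ref{theorem:heat}, essentially localised near $\mathcal O(Q^{*})$ at that scale; writing $H_{t-s}(VK_s(\phi_Q f))(\mathbf{x})$ as a triple integral in $(\mathbf{x},\mathbf{z},\mathbf{y})$ and integrating in $\mathbf{x}$ first, the kernel $\bigl(1+\|\mathbf{x}-\mathbf{z}\|^2/(t-s)\bigr)^{-1}\mathcal G_{(t-s)/c}(\mathbf{x},\mathbf{z})$ has $\mathbf{x}$-integral bounded uniformly in $t-s$, which uncouples the singular region $t-s\to0$ from the spatial variables; one is then reduced, via~\eqref{eq:VK} together with~\eqref{eq:K} (and the near/far splitting of $V$ as above), to a bound that is again geometrically summable in $j$. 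Collecting the two contributions and summing over $j\ge0$ yields $\bigl\|\sup_{0<t\le d(Q)^2}|(H_t-K_t)(\phi_Q f)|\bigr\|_{L^1(dw)}\le C\|\phi_Q f\|_{L^1(dw)}$.
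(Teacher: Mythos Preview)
Your overall strategy---perturbation formula, dyadic decomposition in $t$, the split $\int_0^t=\int_0^{t/2}+\int_{t/2}^t$, and the near/far splitting $V=V\chi_{Q^{***}}+V\chi_{(Q^{***})^c}$---matches the paper. But two steps are not justified as written.

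\textbf{The far part of $V$.} Your plan is to apply~\eqref{eq:K} to each cube $Q'\subset (Q^{***})^c$ and invoke~\eqref{eq:finite_overlap} ``to keep their side-lengths under control''. This fails: condition~\eqref{eq:finite_overlap} only compares cubes whose \emph{enlargements intersect}, so it says nothing about cubes far from $Q$; moreover the Dunkl--Gaussian $\mathcal{G}_{s/c}(\mathbf z,\mathbf y)$ decays in the \emph{orbit} distance $d(\mathbf z,\mathbf y)$, so cubes near $\sigma(Q^{*})$, $\sigma\in G$, contribute without any smallness, and there is no reason the resulting sum over $Q'$ is finite from~\eqref{eq:K} alone. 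The paper sidesteps this entirely: it first restricts to $\mathbf x\in Q^{**}$ using~\eqref{eq:outside_H}--\eqref{eq:outside_K}, and then for $V_2=V\chi_{(Q^{***})^c}$ exploits the factor $(1+\|\mathbf x-\mathbf y\|^2/(t-s))^{-1}$ in $h_{t-s}(\mathbf x,\mathbf y)$ (now $\mathbf x\in Q^{**}$, $\mathbf y\notin Q^{***}$, so $\|\mathbf x-\mathbf y\|\ge d(Q)$) and finishes with the global bound~\eqref{eq:VK}, $\int_0^\infty\!\int V K_s|g|\,dw\,ds\le\|g\|_{L^1(dw)}$. No local information about $V$ outside $Q^{***}$ is needed.

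\textbf{The piece $\int_{t/2}^t$.} Saying that the $\mathbf x$-integral of $(1+\|\mathbf x-\mathbf z\|^2/(t-s))^{-1}\mathcal G_{(t-s)/c}(\mathbf x,\mathbf z)$ is bounded uniformly in $t-s$ is correct but insufficient: after integrating in $\mathbf x$ you are left with $\int_{t/2}^t\!\int V(\mathbf z)K_s(\phi_Qf)(\mathbf z)\,dw\,ds$, which by~\eqref{eq:VK} is $\le\|\phi_Qf\|_{L^1(dw)}$ with \emph{no} gain in $j$, so the sum over $j$ diverges. The paper obtains the $2^{-j\delta}$ decay by a different mechanism: after the change of variable and the bound $k_{t-s}\le C\,\mathcal G_{t_j/c}$, it uses the triangle inequality for the orbit distance,
\[
e^{-cd(\mathbf x,\mathbf y)^2/s}\,e^{-cd(\mathbf y,\mathbf z)^2/t_j}\le e^{-cd(\mathbf x,\mathbf y)^2/(2s)}\,e^{-cd(\mathbf x,\mathbf z)^2/(2t_j)}\qquad(s\le t_j/2),
\]
which decouples the variables so that~\eqref{eq:K} applies to the $(\mathbf y,s)$-integral (yielding $C2^{-j\delta}$) while a separate Gaussian in $(\mathbf x,\mathbf z)$ survives for the final $\mathbf x$-integration. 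This trick is the missing ingredient in your sketch.
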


\begin{proof}
Thanks to~\eqref{eq:outside_H} and~\eqref{eq:outside_K} it is enough to estimate 
\begin{align*}
 \|\sup_{0<t \leq d(Q)^2}|(H_t-K_t)(\phi_{Q}f)|\|_{L^1(Q^{**},dw)}.    
\end{align*}
By perturbation formula we write
\begin{equation}\label{eq:perturbation_split}
\begin{split}
    H_t(\phi_{Q}f)(\mathbf{x})-K_t(\phi_{Q}f)(\mathbf{x})&=\int_0^{t}\int_{\mathbb{R}^N}h_{t-s}(\mathbf{x},\mathbf{y})V(\mathbf{y})K_s(\phi_{Q}f)(\mathbf{y})\,dw(\mathbf{y})\,ds \\&=\int_0^{t}\int_{\mathbb{R}^N}h_{t-s}(\mathbf{x},\mathbf{y})V_1(\mathbf{y})K_s(\phi_{Q}f)(\mathbf{y})\,dw(\mathbf{y})\,ds\\&+\int_0^{t}\int_{\mathbb{R}^N}h_{t-s}(\mathbf{x},\mathbf{y})V_2(\mathbf{y})K_s(\phi_{Q}f)(\mathbf{y})\,dw(\mathbf{y})\,ds,
\end{split}
\end{equation}
where $V_1+V_2=V$ and $V_1=V\chi_{Q^{***}}$. In order to estimate the term with $V_2$, we use Theorem~\ref{theorem:heat} and the fact that for $\mathbf{y} \in \mathbb{R}^N \setminus Q^{***}$ and $\mathbf{x} \in Q^{**}$ we have $\|\mathbf{x}-\mathbf{y}\| \geq d(Q)$, so, for $\mathbf{x} \in Q^{**}$ we get
\begin{align*}
    &\sup_{0<t \leq d(Q)^2}\left| \int_0^{t}\int_{\mathbb{R}^N}h_{t-s}(\mathbf{x},\mathbf{y})V_2(\mathbf{y})K_s(\phi_{Q}f)(\mathbf{y})\,dw(\mathbf{y})\,ds\right| \\&\leq \sum_{j=0}^{\infty}\sup_{2^{-j-1}d(Q)^2<t \leq 2^{-j}d(Q)^2} \int_0^{t}\int_{\mathbb{R}^N}h_{t-s}(\mathbf{x},\mathbf{y})V_2(\mathbf{y})K_s(|\phi_{Q}f|)(\mathbf{y})\,dw(\mathbf{y})\,ds\\&=\sum_{j=0}^{\infty}\sup_{2^{-j-1}d(Q)^2<t \leq 2^{-j}d(Q)^2} \sum_{\ell=0}^{\infty}\int_{t-2^{-\ell}t}^{t-2^{-\ell-1}t}\int_{\mathbb{R}^N}h_{t-s}(\mathbf{x},\mathbf{y})V_2(\mathbf{y})K_s(|\phi_{Q}f|)(\mathbf{y})\,dw(\mathbf{y})\,ds \\&\leq C\sum_{j,\ell=0}^{\infty}\sup_{2^{-j-1}d(Q)^2 <t\leq 2^{-j}d(Q)^2}\int_{t-2^{-\ell}t}^{t-2^{-\ell-1}t}\int_{\mathbb{R}^N}\frac{t-s}{\|\mathbf{x}-\mathbf{y}\|^2}\frac{1}{w(B(\mathbf{y},\sqrt{t-s}))}\\&\times e^{-cd(\mathbf{x},\mathbf{y})^2/(t-s)}V_2(\mathbf{y})K_{s}|(\phi_{Q}f)|(\mathbf{y})\,dw(\mathbf{y})\,ds \\&\leq C\sum_{j,\ell=0}^{\infty}\int\limits_0^{\infty}\int\limits_{\mathbb{R}^N}\frac{2^{-j-\ell}d(Q)^2}{d(Q)^2}\frac{1}{w(B(\mathbf{y},2^{-(j+\ell)/2}d(Q)))}e^{-\frac{cd(\mathbf{x},\mathbf{y})^2}{(2^{-j-\ell}d(Q)^2)}}V(\mathbf{y})K_s|(\phi_{Q}f)|(\mathbf{y})\,dw(\mathbf{y})\,ds.
\end{align*}
Therefore, by the Fubini theorem and~\eqref{eq:VK} we obtain
\begin{align*}
    &\left\|\sup_{0<t \leq d(Q)^2} \int_0^{t}\int_{\mathbb{R}^N}h_{t-s}(\mathbf{x},\mathbf{y})V_2(\mathbf{y})K_s(\phi_{Q}f)(\mathbf{y})\,dw(\mathbf{y})\,ds \right\|_{L^1(Q^{**},\,dw)} \\ &\leq C\sum_{j,l=0}^{\infty}2^{-j-l}\int_0^{\infty}\int_{\mathbb{R}^N}V(\mathbf{y})K_s|(\phi_{Q}f)|(\mathbf{y})\,dw(\mathbf{y})\,ds \leq C\|\phi_{Q}f\|_{L^1(dw)}.
\end{align*}
In order to estimate the term containing $V_1$ in~\eqref{eq:perturbation_split}, we write
\begin{align*}
    \int_0^{t}\int_{\mathbb{R}^N}h_{t-s}(\mathbf{x},\mathbf{y})V_1(\mathbf{y})K_s(\phi_{Q}f)(\mathbf{y})\,dw(\mathbf{y})\,ds=\int_0^{t/2}\ldots+\int_{t/2}^{t}\ldots=:I_t(\mathbf{x})+J_t(\mathbf{x}).
\end{align*}
Clearly, by Theorem~\ref{theorem:heat} and the Fubini theorem, we get
\begin{align*}
    &\|\sup_{0<t\leq d(Q)^2}|I_t(\mathbf{x})|\|_{L^1(dw(\mathbf{x}))} \leq \sum_{j=0}^{\infty} \|\sup_{2^{-j-1}d(Q)^2<t\leq 2^{-j}d(Q)^2}|I_t(\mathbf{x})|\|_{L^1(dw(\mathbf{x}))} \\&\leq C\sum_{j=0}^{\infty}\int\limits_{Q^{**}}\int\limits_0^{2^{-j}d(Q)^2}\int\limits_{\mathbb{R}^N}\frac{1}{w(B(\mathbf{y},2^{-j/2}d(Q)))}e^{\frac{-cd(\mathbf{x},\mathbf{y})^2}{2^{-j}d(Q)^2}}V_1(\mathbf{y})K_s|(\phi_{Q}f)|(\mathbf{y})\,dw(\mathbf{y})\,ds\,dw(\mathbf{x}) \\&\leq C\sum_{j=0}^{\infty}\int_{0}^{2^{-j}d(Q)^2}\int_{\mathbb{R}^N}V_1(\mathbf{y})K_s|(\phi_{Q}f)|(\mathbf{y})\,dw(\mathbf{y})\,ds\\&=C\sum_{j=0}^{\infty}\int_0^{2^{-j}d(Q)^2}\int_{Q^{***}}V(\mathbf{y})\int_{\mathbb{R}^N}k_{s}(\mathbf{y},\mathbf{z})|(\phi_{Q}f)(\mathbf{z})|\,dw(\mathbf{z})\,dw(\mathbf{y})\,ds\\&\leq C'\sum_{j=0}^{\infty}\int_{\mathbb{R}^N}|(\phi_{Q}f)(\mathbf{z})|\left(\int_0^{2^{-j}d(Q)^2}\int_{Q^{***}}V(\mathbf{y})\mathcal{G}_{s/c}(\mathbf{y},\mathbf{z})\,dw(\mathbf{y})\,ds\right)\,dw(\mathbf{z}),
\end{align*}
where in the last step we have used ~\eqref{eq:kernels_compare} and Theorem~\ref{theorem:heat}. Consequently, by assumption~\eqref{eq:K}, we get
\begin{align*}
    &\|\sup_{0<t\leq d(Q)^2}|I_t(\mathbf{x})|\|_{L^1(dw(\mathbf{x}))} \leq C\sum_{j=0}^{\infty}2^{-j\delta}\|(\phi_{Q}f)\|_{L^1(dw)} \leq C\|(\phi_{Q}f)\|_{L^1(dw)}.
\end{align*}
Similarly, we write
\begin{equation}\label{eq:J_split}
     \|\sup_{0<t\leq d(Q)^2}|J_t(\mathbf{x})|\|_{L^1(dw(\mathbf{x}))} \leq \sum_{j=0}^{\infty} \|\sup_{2^{-j-1}d(Q)^2<t\leq 2^{-j}d(Q)^2}|J_t(\mathbf{x})|\|_{L^1(dw(\mathbf{x}))},
\end{equation}
then by changing of variables we have
\begin{align*}
    |J_t(\mathbf{x})|&\leq \int_0^{t/2}\int_{\mathbb{R}^N}h_{s}(\mathbf{x},\mathbf{y})V_1(\mathbf{y})K_{t-s}(|\phi_{Q}f|)(\mathbf{y})\,dw(\mathbf{y})\,ds\\&=\int_{\mathbb{R}^N}\int_0^{t/2}\int_{\mathbb{R}^N}h_{s}(\mathbf{x},\mathbf{y})V_1(\mathbf{y})k_{t-s}(\mathbf{y},\mathbf{z})(|\phi_{Q}f|)(\mathbf{z})\,dw(\mathbf{y})\,ds\,dw(\mathbf{z}),
\end{align*}
so, by Theorem~\ref{theorem:heat} and~\eqref{eq:kernels_compare} we get
\begin{equation}\label{eq:triple}
\begin{split}
    &\sup_{2^{-j-1}d(Q)^2<t \leq 2^{-j}d(Q)^2}|J_t(\mathbf{x})|\\& \leq
    C\int_{\mathbb{R}^N}\int_0^{t/2}\int_{\mathbb{R}^N}\mathcal{G}_{s/c}(\mathbf{x},\mathbf{y})V_1(\mathbf{y})\mathcal{G}_{2^{-j}d(Q)^2/c}(\mathbf{y},\mathbf{z})(|\phi_{Q}f|)(\mathbf{z})\,dw(\mathbf{y})\,ds\,dw(\mathbf{z}).
\end{split}
\end{equation}
Moreover, for $s \leq \frac{t}{2} \leq 2^{-j-1}d(Q)^2$ we have
\begin{align*}
    e^{-cd(\mathbf{x},\mathbf{y})^2/s}  e^{-cd(\mathbf{y},\mathbf{z})^2/(2^{-j}d(Q)^2)} &\leq  e^{-cd(\mathbf{x},\mathbf{y})^2/(2s)} e^{-cd(\mathbf{x},\mathbf{y})^2/(2^{-j}d(Q)^2)}  e^{-cd(\mathbf{y},\mathbf{z})^2/(2^{-j}d(Q)^2)}\\&\leq e^{-cd(\mathbf{x},\mathbf{y})^2/(2s)} e^{-cd(\mathbf{x},\mathbf{z})^2/(2^{-j+1}d(Q)^2)},
\end{align*}
so~\eqref{eq:triple} and the doubling property of $w$ lead us to
\begin{align*}
    &\sup_{2^{-j-1}d(Q)^2<t \leq 2^{-j}d(Q)^2}|J_t(\mathbf{x})| \\& \leq
    \int_{\mathbb{R}^N}\int_0^{t/2}\int_{\mathbb{R}^N}\mathcal{G}_{2s/c}(\mathbf{x},\mathbf{y})V_1(\mathbf{y})\frac{1}{w(B(\mathbf{z},2^{-j/2}d(Q)))}e^{-\frac{cd(\mathbf{x},\mathbf{z})^2}{2^{-j+1}d(Q)^2}}(|\phi_{Q}f|)(\mathbf{z})\,dw(\mathbf{y})\,ds\,dw(\mathbf{z}). 
\end{align*}
Furthermore, by assumption~\eqref{eq:K}, we get
\begin{equation}\label{eq:app_K}
    \sup_{2^{-j-1}d(Q)^2<t \leq 2^{-j}d(Q)^2}|J_t(\mathbf{x})|   \leq C2^{-j\delta}\int_{\mathbb{R}^N}\frac{1}{w(B(\mathbf{z},2^{-j/2}d(Q)))}e^{-\frac{cd(\mathbf{x},\mathbf{z})^2}{2^{-j+1}d(Q)^2}}(|\phi_{Q}f|)(\mathbf{z})\,dw(\mathbf{z}).
\end{equation}
Finally, integrating~\eqref{eq:app_K} with respect to $\mathbf{x}$-variable and taking~\eqref{eq:J_split} into account we are done.
\end{proof}

\section{Proof of Theorem \texorpdfstring{~\ref{theorem:main}}{ main }}\label{sec:proof}

\subsection{Proof of the inequality \texorpdfstring{$C^{-1}\|f\|_{H^{1,{\rm{at}}}_{\mathcal{Q}}} \leq \|f\|_{H^1_{L}}$}{first}}
Thanks to Corollary~\ref{coro:outside} and Lemma~\ref{lem:difference} we have
\begin{align*}
    &\sum_{Q \in \mathcal{Q}}\|\phi_{Q}f\|_{H^{1}_{{\rm{loc}}, d(Q)}} \leq C\sum_{Q \in \mathcal{Q}}\|\sup_{0<t \leq d(Q)^2}|H_t(\phi_{Q}f)|\|_{L^1(Q^{**},\,dw)}+C\|f\|_{L^1(dw)} \\&\leq C\sum_{Q \in \mathcal{Q}}\|\sup_{0<t \leq d(Q)^2}|(H_t-K_t)(\phi_{Q}f)|\|_{L^1(Q^{**},\,dw)}+C\sum_{Q \in \mathcal{Q}}\|\sup_{0<t \leq d(Q)^2}|K_t(\phi_{Q}f)|\|_{L^1(Q^{**},\,dw)}\\&+C\|f\|_{L^1(dw)}\leq C\|f\|_{L^1(dw)}+C\sum_{Q \in \mathcal{Q}}\|\sup_{0<t \leq d(Q)^2}|K_t(\phi_{Q}f)|\|_{L^1(Q^{**},\,dw)}.
\end{align*}
Then, by Lemma~\ref{lem:commutator} and~\eqref{eq:outside_sum} we get
\begin{align*}
    &\sum_{Q \in \mathcal{Q}}\|\sup_{0<t \leq d(Q)^2}|K_t(\phi_{Q}f)|\|_{L^1(Q^{**},\,dw)}\leq \sum_{Q \in \mathcal{Q}}\int_{Q^{**}}\sup_{0<t \leq d(Q)^2}|K_t(\phi_{Q}f)(\mathbf{x})|\,dw(\mathbf{x})\\&=\sum_{Q \in \mathcal{Q}}\int_{Q^{**}}\sup_{0<t \leq d(Q)^2}|K_t\big(\phi_{Q}\sum_{Q' \in \mathcal{Q}'(Q)}(\phi_{Q'}f)\big)(\mathbf{x})|\,dw(\mathbf{x})\\&\leq \sum_{Q \in \mathcal{Q}}\int_{Q^{**}}\sup_{0<t \leq d(Q)^2}|K_t\big(\phi_{Q}\sum_{Q' \in \mathcal{Q}'(Q)}(\phi_{Q'}f)\big)(\mathbf{x})-\phi_{Q}(\mathbf{x})\big(K_t(\sum_{Q' \in \mathcal{Q}'(Q)}(\phi_{Q'}f)\big)(\mathbf{x})|\,dw(\mathbf{x})\\&+ \sum_{Q \in \mathcal{Q}}\int_{Q^{**}}\sup_{0<t \leq d(Q)^2}|\phi_{Q}(\mathbf{x})\big(K_t(\sum_{Q'' \in \mathcal{Q}''(Q)}(\phi_{Q''}f)\big)(\mathbf{x})|\,dw(\mathbf{x})\\&+\sum_{Q \in \mathcal{Q}}\int_{Q^{**}}\phi_{Q}(\mathbf{x})\sup_{0<t\leq d(Q)^2}|K_t(f)(\mathbf{x})|\,dw(\mathbf{x})\leq C\|f\|_{L^1(dw)}+\|\sup_{t>0}|K_tf|\|_{L^1(dw)}.
\end{align*}
Hence, we have obtained
\begin{align*}
    \sum_{Q \in \mathcal{Q}}\|\phi_{Q}f\|_{H^{1}_{{\rm{loc}}, d(Q)}} \leq C\|f\|_{H^1_{L}},
\end{align*}
therefore, by Proposition~\ref{propo:Goldberg} we get
\begin{align*}
    \phi_{Q}(\mathbf{x})f(\mathbf{x})=\sum_{j=0}^{\infty}c_{j,Q}(\mathbf{x})a_{j,Q}(\mathbf{x})
\end{align*}
where $a_{j,Q}$ are atoms of local Hardy space $H^{1,{\rm{at}}}_{{\rm{loc}}, d(Q)}$ (see Definition~\ref{def:atomic_Dunkl} and Proposition~\ref{propo:Goldberg}) and
\begin{align*}
    \sum_{Q \in \mathcal{Q}}\sum_{j=0}^{\infty}|c_{j,Q}| \leq C\|f\|_{H^1_{L}}.
\end{align*}
Moreover, by Proposition~\ref{propo:Goldberg}, $\supp \phi_{Q} f \subseteq Q^{*}$ implies $\supp a_{j,Q} \subseteq Q^{****}$. Consequently, by Definition~\ref{def:atomic}, each $a_{j,Q}$ is an atom of $H^{1,{\rm at}}_{\mathcal{Q}}$.

\subsection{Proof of the inequality \texorpdfstring{$\|f\|_{H^1_{L}} \leq C\|f\|_{H^{1,{\rm{at}}}_{\mathcal{Q}}} $}{second}} It is enough to check if there is a constant $C>0$ such that for all atoms $a(\mathbf{x})$ of $H^{1,{\rm{at}}}_{\mathcal{Q}}$ we have $\|a\|_{H^1_{L}} \leq C$. Suppose that $a(\mathbf{x})$ is associated with a cube $Q \in \mathcal{Q}$. We write 
\begin{equation}\label{eq:finite_cubes}
a=\sum_{Q' \in \mathcal{Q}}\phi_{Q'}a . 
\end{equation} 
Thanks to~\eqref{eq:finite_overlap} and the fact that $\supp a \subseteq Q^{****}$, there is a number $M>0$ independent of $Q$ such that in~\eqref{eq:finite_cubes} there are at most $M$ nonzero summands with $d(Q') \sim d(Q)$. Let $\ell \geq 0$ be the smallest positive integer such that $d(Q') \geq 2^{-\ell/2}d(Q)$ for all such a cubes in~\eqref{eq:finite_cubes}. Clearly, thanks to~\eqref{eq:finite_overlap}, $\ell$ is independent of $a$ and $Q \in \mathcal{Q}$.  We write
\begin{align*}
    \|a\|_{H^1_{L}} \leq \|\sup_{0<t \leq 2^{-\ell}d(Q)^2}|K_ta|\|_{L^1(dw)}+\|\sup_{t> 2^{-\ell}d(Q)^2}|K_ta|\|_{L^1(dw)}=:I_1+I_2.
\end{align*}
Further,
\begin{align*}
    I_1 \leq \|\sup_{0<t \leq 2^{-\ell}d(Q)^2}|(K_t-H_t)a|\|_{L^1(dw)}+\|\sup_{0<t \leq 2^{-\ell}d(Q)^2}|H_ta|\|_{L^1(dw)}.
\end{align*}
Thanks to the fact that atom $a$ is, by definition, an atom for $H^{1}_{{\rm{loc}},d(Q)}$, we have
\begin{align*}
    \|\sup_{0<t \leq 2^{-\ell}d(Q)^2}|H_ta|\|_{L^1(dw)} \leq \|\sup_{0<t \leq d(Q)^2}|H_ta|\|_{L^1(dw)} \leq C.
\end{align*}
Thanks to~\eqref{eq:difference} and~\eqref{eq:finite_cubes}, we get
\begin{align*}
    \|\sup_{0<t \leq 2^{-\ell}d(Q)^2}|(K_t-H_t)\sum_{Q' \in \mathcal{Q}}(\phi_{Q'}a)|\|_{L^1(dw)} &\leq \sum_{Q' \in \mathcal{Q}}  \|\sup_{0<t \leq d(Q')^2}|(K_t-H_t)(\phi_{Q'}a)|\|_{L^1(dw)}\\&\leq C\sum_{Q' \in Q}\|\phi_{Q'}a\|_{L^1(dw)} \leq CM\|a\|_{L^1(dw)} \leq C.
\end{align*}
In order to estimate $I_2$, we repeat the argument presented in the proof of~\eqref{eq:outside_sum}. We provide details. We write
\begin{equation}\label{eq:j_split}
    I_2 \leq \sum_{j=-\ell}^{\infty}\|\sup_{2^{j}d(Q)^{2} < t \leq 2^{j+1}d(Q)^2}|K_ta|\|_{L^1(dw)}.
\end{equation}
By the semigroup property and Theorem~\ref{theorem:heat} together with~\eqref{eq:kernels_compare} for
\begin{align*}
2^{j}d(Q)^2 < t  \leq 2^{j+1}d(Q)^2    
\end{align*} 
we have
\begin{align*}
    &\int_{\mathbb{R}^N}k_t(\mathbf{x},\mathbf{y})|a(\mathbf{y})|\,dw(\mathbf{y})\\&=\int_{\mathbb{R}^N}\int_{\mathbb{R}^N}k_{t-2^{j-1}d(Q)^2}(\mathbf{x},\mathbf{z})k_{2^{j-1}d(Q)^2}(\mathbf{z},\mathbf{y})\,dw(\mathbf{z})|a(\mathbf{y})|\,dw(\mathbf{y})\\&\leq C \int_{\mathbb{R}^N}\int_{\mathbb{R}^N}\frac{1}{w(B(\mathbf{z},2^{j/2}d(Q)))}e^{-cd(\mathbf{x},\mathbf{z})^2/(2^{j+1}d(Q)^2)}k_{2^{j-1}d(Q)^2}(\mathbf{z},\mathbf{y})\,dw(\mathbf{z})|a(\mathbf{y})|\,dw(\mathbf{y}).
\end{align*}
Therefore, integrating over the $\mathbf{x}$-variable, we obtain
\begin{align*}
    &\int_{\mathbb{R}^N}\sup_{2^{j}d(Q)^2 \leq t <2^{j+1}d(Q)^2}\int_{\mathbb{R}^N}k_t(\mathbf{x},\mathbf{y})|a(\mathbf{y})|\,dw(\mathbf{y})\,dw(\mathbf{x})\\&\leq C \int_{\mathbb{R}^N}|a(\mathbf{y})|\int_{\mathbb{R}^N}k_{2^{j-1}d(Q)^2}(\mathbf{z},\mathbf{y})\,dw(\mathbf{z})\,dw(\mathbf{y}).
\end{align*}
Consequently, by condition~\eqref{eq:D} and~\eqref{eq:j_split}, we get
\begin{align*}
    I_2 \leq C\sum_{j=-\ell}^{\infty}j^{-1-\varepsilon}\|a\|_{L^1(dw)} \leq C'\|a\|_{L^1(dw)} \leq C'.
\end{align*}

\section{Verification of conditions\texorpdfstring{~\eqref{eq:finite_overlap},~\eqref{eq:D}, and~\eqref{eq:K}}{(F), (D), and (K)}} \label{sec:verification}

Let us note that the condition~\eqref{eq:finite_overlap} is already checked, see Fact~\ref{fact:F}.

\subsection{Verification of condition\texorpdfstring{~\eqref{eq:D}}{(D)}}
\begin{lemma}
There is a constant $C>0$ such that for all $\mathbf{y} \in \mathbb{R}^N$ and $t>0$ we have
\begin{equation}\label{eq:analytic}
    \langle Lk_{t}(\cdot,\mathbf{y}),k_t(\cdot,\mathbf{y})\rangle \leq \frac{C}{tw(B(\mathbf{y},\sqrt{t}))}.
\end{equation}
\end{lemma}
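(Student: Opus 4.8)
The plan is to estimate $\langle Lk_t(\cdot,\mathbf{y}),k_t(\cdot,\mathbf{y})\rangle$ via the spectral calculus for $L$, using the heat-kernel representation $k_t(\cdot,\mathbf{y}) = K_t(k_0^{\mathbf{y}})$ in a limiting sense, and more concretely writing $k_{2t}(\cdot,\mathbf{y}) = K_t(k_t(\cdot,\mathbf{y}))$ by the semigroup property together with the symmetry $k_t(\mathbf{x},\mathbf{y})=k_t(\mathbf{y},\mathbf{x})$. The key identity is that for any $g$ in the domain of $L$,
\begin{equation*}
\langle LK_{2t}g, K_{2t}g\rangle = \langle L e^{-2tL}g, e^{-2tL}g\rangle = \int_0^\infty \lambda e^{-4t\lambda}\, d\|E_\lambda g\|^2,
\end{equation*}
and the elementary bound $\lambda e^{-4t\lambda} \leq (4et)^{-1}$ gives $\langle L K_{2t}g, K_{2t}g\rangle \leq (4et)^{-1}\|g\|_{L^2(dw)}^2$. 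The strategy is therefore to realize $k_{2t}(\cdot,\mathbf{y})$ as $K_t$ applied to $k_t(\cdot,\mathbf{y})$, so that
\begin{equation*}
\langle L k_{2t}(\cdot,\mathbf{y}), k_{2t}(\cdot,\mathbf{y})\rangle \leq \frac{C}{t}\|k_t(\cdot,\mathbf{y})\|_{L^2(dw)}^2,
\end{equation*}
and then to control $\|k_t(\cdot,\mathbf{y})\|_{L^2(dw)}^2$.

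**Key steps.** First I would record that $k_t(\cdot,\mathbf{y}) \in L^2(dw)$ and that, by the semigroup and symmetry properties, $k_{2t}(\mathbf{x},\mathbf{y}) = \int_{\mathbb{R}^N} k_t(\mathbf{x},\mathbf{z})k_t(\mathbf{z},\mathbf{y})\,dw(\mathbf{z}) = K_t[k_t(\cdot,\mathbf{y})](\mathbf{x})$, so $k_{2t}(\cdot,\mathbf{y})$ lies in the range of $K_t$ and hence in $\mathcal{D}(L)$. Second, apply the spectral estimate above with $g = k_t(\cdot,\mathbf{y})$ to get $\langle Lk_{2t}(\cdot,\mathbf{y}),k_{2t}(\cdot,\mathbf{y})\rangle \leq (4et)^{-1}\|k_t(\cdot,\mathbf{y})\|_{L^2(dw)}^2$. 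Third, estimate the $L^2$-norm: by~\eqref{eq:kernels_compare} and the semigroup/symmetry identity for the Dunkl heat kernel, $\|k_t(\cdot,\mathbf{y})\|_{L^2(dw)}^2 \leq \|h_t(\cdot,\mathbf{y})\|_{L^2(dw)}^2 = h_{2t}(\mathbf{y},\mathbf{y})$; and by the explicit formula~\eqref{eq:heat_def} for the radial heat kernel together with the Gaussian bound from Theorem~\ref{theorem:heat} evaluated at $\mathbf{x}=\mathbf{y}$ (where $d(\mathbf{x},\mathbf{y})=0$), one has $h_{2t}(\mathbf{y},\mathbf{y}) \leq C\, w(B(\mathbf{y},\sqrt{t}))^{-1}$, using~\eqref{eq:doubling} to replace $\sqrt{2t}$ by $\sqrt{t}$. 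Finally, relabel $2t \mapsto t$ and absorb constants via doubling~\eqref{eq:doubling} and~\eqref{eq:growth} to obtain~\eqref{eq:analytic}.

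**Main obstacle.** The delicate point is the justification that $k_{2t}(\cdot,\mathbf{y}) \in \mathcal{D}(L)$ and that the spectral-theorem manipulation is legitimate: one must argue that $\langle Lk_{2t}(\cdot,\mathbf{y}),k_{2t}(\cdot,\mathbf{y})\rangle$ is finite and equals $\mathbf{Q}(k_{2t}(\cdot,\mathbf{y}),k_{2t}(\cdot,\mathbf{y}))$. This follows from the general fact that $e^{-tL}$ maps $L^2(dw)$ into $\mathcal{D}(L)$ for every $t>0$ (since $L$ is a nonnegative self-adjoint operator generating the semigroup $K_t$), and $\|Le^{-tL}\|_{L^2 \to L^2} \leq (et)^{-1}$; applying this to $g = k_t(\cdot,\mathbf{y})$ is exactly the content of the second step. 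One should be careful that everything is phrased in the self-adjoint functional calculus rather than pointwise, so the only analytic input needed beyond soft operator theory is the $L^2$-bound on $h_t(\cdot,\mathbf{y})$, which is immediate from the explicit Gaussian form of the Dunkl heat kernel.
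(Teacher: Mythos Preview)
Your proposal is correct and follows essentially the same route as the paper: both arguments write $k_t(\cdot,\mathbf{y})=K_{t/2}\,k_{t/2}(\cdot,\mathbf{y})$ via the semigroup property, invoke the analyticity of $\{K_t\}$ (you via the spectral bound $\lambda e^{-2s\lambda}\le (2es)^{-1}$, the paper via Cauchy--Schwarz together with $\|LK_{t/2}\|_{L^2\to L^2}\le C/t$), and then control $\|k_{t/2}(\cdot,\mathbf{y})\|_{L^2(dw)}^2$ by $C\,w(B(\mathbf{y},\sqrt{t}))^{-1}$ using $0\le k_t\le h_t$ and the Gaussian estimate for $h_t$. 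The only cosmetic slip is the time labeling in your displayed spectral identity (you state it for $K_{2t}g$ but then apply it with $K_t g=k_{2t}(\cdot,\mathbf{y})$); replacing $2t$ by $s$ there fixes this with no change to the argument.
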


\begin{proof}
Thanks to the fact that operator $L$ is positive and self-adjoint, we have that the semigroup $\{K_{t}\}_{t \geq 0}$ is analytic on $L^2(dw)$, so the operator $LK_{t/2}$ is bounded on $L^2(dw)$ for all $t>0$. Therefore, by the semigroup property and the definition of $L$ (here $L_{\mathbf{x}}$ denotes the action of $L$ with respect to $\mathbf{x}$-variable) we have
\begin{equation}\label{eq:LKt/2}
    L_{\mathbf{x}}k_t(\mathbf{x},\mathbf{y})=L_{\mathbf{x}}\int_{\mathbb{R}^N}k_{t/2}(\mathbf{x},\mathbf{z})k_{t/2}(\mathbf{z},\mathbf{y})\,dw(\mathbf{z})=((LK_{t/2})k_{t/2}(\cdot,\mathbf{y}))(\mathbf{x}).
\end{equation}
Consequently, by the Cauchy--Schwarz inequality we have
\begin{equation}\label{eq:semigroup_scalar}
\begin{split}
    \langle Lk_{t}(\cdot,\mathbf{y}),k_t(\cdot,\mathbf{y})\rangle&=\langle LK_{t/2}(k_{t/2}(\cdot,\mathbf{y}))(\cdot),k_t(\cdot,\mathbf{y})\rangle \\&\leq \|k_{t}(\cdot,\mathbf{y})\|_{L^2(dw)}\|LK_{t/2}(k_{t/2}(\cdot,\mathbf{y}))(\cdot)\|_{L^2(dw)}.
\end{split}
\end{equation}
By Theorem~\ref{theorem:heat} and~\eqref{eq:kernels_compare} we obtain
\begin{equation}\label{eq:semi_1}
    \|k_{t}(\cdot,\mathbf{y})\|_{L^2(dw)} \leq \frac{C}{w(B(\mathbf{y},\sqrt{t}))^{1/2}}.
\end{equation}
Moreover,  holomorphy of $\{K_t\}_{t \geq 0}$ together with Theorem~\ref{theorem:heat} lead us to
\begin{equation}\label{eq:semi_2}
    \|LK_{t/2}(k_{t/2}(\cdot,\mathbf{y}))(\cdot)\|_{L^2(dw)} \leq C\frac{1}{t}\|k_{t/2}(\cdot,\mathbf{y})\|_{L^2(dw)} \leq C'\frac{1}{tw(B(\mathbf{y},\sqrt{t}))^{1/2}}.
\end{equation}
The claim is a consequence of~\eqref{eq:semigroup_scalar} together with~\eqref{eq:semi_1} and~\eqref{eq:semi_2}.
\end{proof}

Now we are ready prove that the condition~\eqref{eq:D} holds for the potential $V$ satisfying the reverse H\"older inequality~\eqref{eq:reverse_Holder}. Fix $\mathbf{y} \in \mathbb{R}^N$ and $0<t \leq d(Q)^2$. For any $r>0$ (it will be chosen later), by Cauchy--Schwarz inequality, ~\eqref{eq:kernels_compare}, and Theorem~\ref{theorem:heat} we obtain
\begin{align*}
    I=\left(\int_{\mathbb{R}^N}k_t(\mathbf{x},\mathbf{y})\,dw(\mathbf{x})\right)^{2} &\leq 2\left(\int_{\|\mathbf{x}-\mathbf{y}\| \leq r}k_t(\mathbf{x},\mathbf{y})\,dw(\mathbf{x})\right)^{2}+2\left(\int_{\|\mathbf{x}-\mathbf{y}\| > r}k_t(\mathbf{x},\mathbf{y})\,dw(\mathbf{x})\right)^{2} \\&\leq Cw(B(\mathbf{y},r))\int_{\|\mathbf{x}-\mathbf{y}\| \leq r}k_t(\mathbf{x},\mathbf{y})^2\,dw(\mathbf{x})+Ctr^{-2}.
\end{align*}
By~\eqref{eq:LKt/2} and the comment above~\eqref{eq:LKt/2} we have $k_{t}(\cdot,\mathbf{y}) \in \mathcal{D}(L)$. Therefore
\begin{align*}
    \mathbf{Q}(k_{t}(\cdot, \mathbf{y}),k_{t}(\cdot,\mathbf{y}))=\langle Lk_t(\cdot,\mathbf{y}),k_{t}(\cdot,\mathbf{y}) \rangle.
\end{align*}
Consequently, using~\eqref{eq:Shen_C}, then Theorem~\ref{theo:Fefferman-Phong}, we get
\begin{equation}\label{eq:k_square}
\begin{split}
    I &\leq Cw(B(\mathbf{y}, r))m(\mathbf{y})^{-2}(1+rm(\mathbf{y}))^{\frac{2\kappa}{1+\kappa}}\int_{\mathbb{R}^N}k_t(\mathbf{x},\mathbf{y})^2m(\mathbf{x})^2\,dw(\mathbf{x})+Ctr^{-2}\\&\leq Cw(B(\mathbf{y}, r))m(\mathbf{y})^{-2}(1+rm(\mathbf{y}))^{\frac{2\kappa}{1+\kappa}}\langle Lk_t(\cdot,\mathbf{y}),k_{t}(\cdot,\mathbf{y}) \rangle +Ctr^{-2}.
\end{split}
\end{equation} 
By~\eqref{eq:analytic} and~\eqref{eq:growth} we get
\begin{equation}\label{eq:without_plug}
\begin{split}
    I &\leq C\frac{w(B(\mathbf{y},r))}{tw(B(\mathbf{y},\sqrt{t}))}m(\mathbf{y})^{-2}(1+rm(\mathbf{y}))^{\frac{2\kappa}{1+\kappa}}+Ctr^{-2} \\&\leq C(r^{\mathbf{N}}t^{-\mathbf{N}/2}+r^{N}t^{-N/2})m(\mathbf{y})^{-2}(1+rm(\mathbf{y}))^{\frac{2\kappa}{1+\kappa}}+Ctr^{-2}.
\end{split}
\end{equation}
If we plug in $r=t^{\frac{1+\varepsilon}{2}}m(\mathbf{y})^{\varepsilon}$, we get
\begin{align*}
    I \leq C(t^{\mathbf{N}\varepsilon/2-1}m(\mathbf{y})^{\mathbf{N}\varepsilon-2}+t^{N\varepsilon/2-1}m(\mathbf{y})^{N\varepsilon-2})(1+t^{1/2+\varepsilon/2}m(\mathbf{y})^{1+\varepsilon})^{2\kappa/(1+\kappa)}+Ct^{-\varepsilon}m(\mathbf{y})^{-2\varepsilon},
\end{align*}
so if we take $\varepsilon$ small enough,
we get
\begin{align*}
    I \leq Ct^{-\varepsilon_1}m(\mathbf{y})^{-2\varepsilon_1} \text{ for some }\varepsilon_1>0,
\end{align*}
which, thanks to the fact that for $\mathbf{y} \in Q^{****}$ we have $m(\mathbf{y}) \sim d(Q)^{-1}$ (see Fact~\ref{fact:m_d_compare}), ends the proof.

\subsection{Verification of condition~\texorpdfstring{~\eqref{eq:K}}{(K)}}
Thanks to H\"older's inequality with the exponent $q$ from~\eqref{eq:reverse_Holder} we have
\begin{equation}\label{eq:holder_heat}
\begin{split}
    I&= \int_0^{2t}\int_{Q^{***}}V(\mathbf{y})\mathcal{G}_{2s/c}(\mathbf{x},\mathbf{y})\,dw(\mathbf{y})\,ds\\& \leq \int_0^{2t}\left(\frac{1}{w(Q^{***})}\int_{Q^{***}}V(\mathbf{y})^q\,dw(\mathbf{y})\right)^{1/q}w(Q^{***})^{1/q}\left(\int_{Q^{***}}\mathcal{G}_{2s/c}(\mathbf{x},\mathbf{y})^{q'}\,dw(\mathbf{y})\right)^{1/q'}\,ds.
\end{split}
\end{equation}
Furthermore, by the definition of $\mathcal{G}_{2s/c}$ (see~\eqref{eq:mathcal_G}) we have
\begin{equation}\label{eq:heat_Lq'}
    \int_{Q^{***}}\mathcal{G}_{2s/c}(\mathbf{x},\mathbf{y})^{q'}\,dw(\mathbf{y}) \leq C\int_{Q^{***}}\frac{1}{w(B(\mathbf{y},\sqrt{s}))^{q'-1}}\frac{1}{w(B(\mathbf{x},\sqrt{s}))}e^{-cq'd(\mathbf{x},\mathbf{y})^2/(2s)}\,dw(\mathbf{y}).
\end{equation}
Note that for $\mathbf{y} \in Q^{***}$ we have $w(B(\mathbf{y},d(Q))) \sim w(Q^{***})$, therefore,
\begin{equation}\label{eq:heat_Lq'_cube}
\begin{split}
    &w(Q^{***})^{1/q}\left(\int_{Q^{***}}\mathcal{G}_{2s/c}(\mathbf{x},\mathbf{y})^{q'}\,dw(\mathbf{y})\right)^{1/q'} \\&\leq C\left(\int_{Q^{***}}\frac{w(Q^{***})^{q'/q}}{w(B(\mathbf{y},\sqrt{s}))^{q'-1}}\frac{1}{w(B(\mathbf{x},\sqrt{s}))}e^{-cq'd(\mathbf{x},\mathbf{y})^2/(2s)}\,dw(\mathbf{y})\right)^{1/q'}\\&\leq C\left(\int_{Q^{***}}\frac{w(B(\mathbf{y},d(Q)))^{q'/q}}{w(B(\mathbf{y},\sqrt{s}))^{q'-1}}\frac{1}{w(B(\mathbf{x},\sqrt{s}))}e^{-cq'd(\mathbf{x},\mathbf{y})^2/(2s)}\,dw(\mathbf{y})\right)^{1/q'}.
\end{split}
\end{equation}
Thanks to~\eqref{eq:growth} we have (let us remind that $\sqrt{s} \leq \sqrt{2t} \leq \sqrt{2}d(Q)$ by assumption of~\eqref{eq:K})
\begin{align*}
    \frac{w(B(\mathbf{y},d(Q)))^{q'/q}}{w(B(\mathbf{y},\sqrt{s}))^{q'-1}}=\frac{w(B(\mathbf{y},d(Q)))^{q'/q}}{w(B(\mathbf{y},\sqrt{s}))^{q'/q}} \leq C\left(\frac{d(Q)}{\sqrt{s}}\right)^{(\mathbf{N}q')/q}.
\end{align*}
Consequently,~\eqref{eq:holder_heat} and~\eqref{eq:heat_Lq'_cube} lead us to
\begin{align*}
    I \leq C\int_0^{2t}\left(\frac{d(Q)}{\sqrt{s}}\right)^{\mathbf{N}/q}\left(\frac{1}{w(Q^{***})}\int_{Q^{***}}V(\mathbf{y})^q\,dw(\mathbf{y})\right)^{1/q}\,ds,
\end{align*}
so, thanks to the reverse H\"older inequality~\eqref{eq:reverse_Holder} and the fact that $q>\max(1,\frac{\mathbf{N}}{2})$ we have
\begin{align*}
    I \leq Ct\left(\frac{d(Q)}{\sqrt{t}}\right)^{\mathbf{N}/q}\frac{1}{w(Q^{***})}\int_{Q^{***}}V(\mathbf{y})\,dw(\mathbf{y}) \leq C\left(\frac{d(Q)}{\sqrt{t}}\right)^{\mathbf{N}/q-2},
\end{align*}
where in the last step the fact that the measures $\mu$ and $w$ are doubling and the definition of $Q \in \mathcal{Q}$ by the stopping-time condition~\eqref{eq:stopping_time}, that means
\begin{align*}
    \frac{1}{w(Q^{***})}\int_{Q^{***}}V(\mathbf{y})\,dw(\mathbf{y}) \leq C\frac{1}{w(Q)}\int_{Q}V(\mathbf{y})\,dw(\mathbf{y}) \leq Cd(Q)^{-2}.
\end{align*}
The proof is finished (we set $\delta=1-\frac{\mathbf{N}}{2q}$).

{\bf Acknowledgment.} The author would like to thank Jacek Dziuba\'nski for careful reading of the text and his helpful comments and suggestions.

\end{document}